\begin{document}

\newtheorem{conjecture}{Conjecture}
\newtheorem{convention}{Convention}
\newtheorem{definition}{Definition}

\newtheorem{theorem}{Theorem}

\newtheorem{lemma}[definition]{Lemma}
\newtheorem{proposition}[definition]{Proposition}
\newtheorem{question}{Question}
\newdefinition{remark}{Remark}
\newproof{proof}{Proof}

\def\A{\mathcal{A}}
\def\M{\mathcal{M}}
\def\P{\mathcal{P}}
\def\pdw#1{\mathrm{pdw}_{#1}}
\def\Q{\mathcal{Q}}
\def\Rset{\mathbb{R}}
\def\T{\mathcal{T}}

\def\blgh{i}
\def\alphaangle{ii}
\def\betaangle{iii}

\def\casenine{1}
\def\casetwofour{2}
\def\theothercases{3}

\title{
Spherical tilings by congruent quadrangles over pseudo-double
  wheels~({III}) | the essential uniqueness in case of convex tiles}
\date{\today}

\begin{keyword} spherical tiling \sep quadrangle\sep pseudo-double
 wheel \sep isohedral. {\MSC[2010] Primary 52C20; Secondary 05B45}
\end{keyword}

\author[mi]{Yohji Akama\corref{cor1}}
\address[mi]{Mathematical Institute\\
  Graduate School of Science,
  Tohoku University\\
  Sendai, Miyagi 980-0845 JAPAN} 

\cortext[cor1]{Corresponding author}
\ead{akama@m.tohoku.ac.jp}
\ead[ur]{http://www.math.tohoku.ac.jp/akama/stcq/}

\author[mi]{Yudai Sakano}
\ead{my.sailingday.0827spc@gmail.com}

\begin{abstract} 
In [B.~Gr{\"u}nbaum, G.~C. Shephard, Spherical tilings with transitivity
  properties, in: The geometric vein, Springer, New York, 1981,
  pp. 65--98], they proved ``for every spherical \emph{normal} tiling by
  congruent tiles, if it is isohedral, then the graph is a Platonic
  solid, an Archimedean dual, an $n$-gonal bipyramid~($n\ge3$), or an
  $n$-gonal trapezohedron~(i.e., the pseudo-double wheel of $2n$
  faces).'' In the classification of spherical monohedral tilings, one
  naturally asks an ``inverse problem'' of their result: \emph{For a
  spherical monohedral tiling of the above mentioned topologies, when is
  the tiling isohedral?}  We prove that for any spherical monohedral
  quadrangular tiling being topologically a trapezohedron, if the number
  of faces is 6, or 8, if the tile is a kite, a dart or a rhombi, or if
  the tile is convex, then the tiling is isohedral.
\end{abstract}

\maketitle

\section{Introduction}\label{sec:intro}

In \cite{akama13:_class_of_spher_tilin_by_i}, we proved that
twelve copies of some spherical \emph{concave} quadrangle organize two
spherical tilings such that the two tilings have the same plain
graph~(the \emph{pseudo-double wheel} of twelve faces~\cite{MR2186681}),
but one spherical tiling~(See the right bottom of Figure~\ref{fig:chart_P_F}) is isohedral~(i.e. the symmetry group acts transitively on the tiles) while the other
not~(See Figure~\ref{chart:a}). The latter spherical
non-isohedral tiling by congruent \emph{concave} quadrangles over a
pseudo-double wheel is a counterexample of the inverse of
Gr\"unbaum-Shephard's result~\cite{MR661770} ``for every spherical
\emph{normal} tiling by congruent tiles, if it is isohedral, then the
graph is a Platonic solid, an Archimedean dual~\cite{MR2410150}, an $n$-gonal
bipyramid~($n\ge3$), or an $n$-gonal trapezohedron~(i.e., the
pseudo-double wheel of $2n$ faces).'' So we ask an ``inverse problem''
of their result: 
\begin{question}\label{q:akama}
For a
spherical monohedral tiling with the graph being a Platonic solid,
an Archimedean dual, an $n$-gonal bipyramid or an $n$-gonal
trapezohedron, when  is it isohedral?
\end{question}
According to \cite{sakano13:_class_of_spher_tilin_by_kdr}, when the tile
is a kite, a dart, or a rhombi, every spherical tiling over a
pseudo-double wheel is isohedral.  Moreover, by checking the number of
tiles and the \emph{vertex types} in the complete table of spherical
monohedral triangular tilings~\cite{MR1954054}, we can prove that
\begin{theorem}\label{thm:inverse_triangle}If a spherical tiling by
 congruent triangles is topologically a Platonic solid, an Archimedean
 dual, an $n$-gonal bipyramid, then 
 the tiling is isohedral.\end{theorem}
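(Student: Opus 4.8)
The plan is to reduce the statement to a finite inspection of the complete list of spherical monohedral triangular tilings in \cite{MR1954054}, and then, for each tiling that survives the inspection, to exhibit a symmetry group of the sphere acting transitively on its tiles. First I would fix the combinatorial targets. By Euler's formula a spherical triangular tiling with $f$ faces has $3f/2$ edges and $f/2+2$ vertices, so the isomorphism type of its graph is determined by $f$ together with the multiset of vertex degrees. Among triangle-faced graphs, the Platonic solids are exactly the tetrahedron ($f=4$, all vertices of degree $3$), the octahedron ($f=8$, all of degree $4$) and the icosahedron ($f=20$, all of degree $5$); the $n$-gonal bipyramid ($n\ge 3$) has $f=2n$ with two vertices of degree $n$ and $n$ vertices of degree $4$; and the triangle-faced Archimedean duals are the triakis tetrahedron, the triakis octahedron, the tetrakis hexahedron, the disdyakis dodecahedron, the triakis icosahedron, the pentakis dodecahedron and the disdyakis triacontahedron, each with its own value of $f$ and its own two- or three-valued degree multiset. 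Hence a triangular tiling is topologically one of the prescribed solids if and only if the pair $(f,\text{degree multiset})$ it determines occurs in this short list.

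Next I would run through the Ueno--Agaoka classification tiling by tiling. Each entry records the tile (its three angles) together with its \emph{vertex types}, and a vertex type fixes in particular the degree of that vertex; summing over the vertices recovers $f$ and the degree multiset. Matching these against the list above discards every tiling whose degree multiset is incompatible --- for instance anything with four or more distinct degrees, or with too many high-degree vertices for a bipyramid or a Catalan solid --- and leaves only a short explicit list of candidates. Each surviving candidate turns out to be one of the classical isohedral spherical tilings described in \cite{MR661770}: the regular triangular tiling of the tetrahedron, octahedron or icosahedron; a subdivision tiling invariant under a tetrahedral, octahedral or icosahedral symmetry group with its tiles forming a single orbit, realizing a Catalan-solid graph; or the standard $n$-gonal bipyramid built from $2n$ congruent isosceles triangles whose sides lie on great circles through the two poles, on which the dihedral group about the polar axis enlarged by reflections acts transitively. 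In every case a finite symmetry group acts transitively on the tiles, so the tiling is isohedral.

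The main obstacle is the length of the Ueno--Agaoka list and the presence in it of genuinely non-isohedral families --- the ``irregular'' monohedral triangular tilings that do not arise as subdivisions of Platonic solids --- so one has to be certain that none of these has a graph isomorphic to a bipyramid, a Platonic solid or a Catalan solid. The way to be certain is to note that such a coincidence would force an extremely rigid degree pattern (all degrees equal, or all equal to $4$ except for two large ones, or one of the few degree pairs or triples permitted by a Catalan solid), and then to check directly from the table that the only tilings realizing these patterns are the subdivisions and bipyramids already accounted for; the residual case-checking for the sporadic tilings with few faces is then a short, routine verification. It is also worth noting that the bipyramids are the only infinite family among the three topologies, the Platonic and Catalan cases contributing just finitely many tilings.
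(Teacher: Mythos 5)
Your proposal is correct and follows essentially the same route as the paper, which establishes this theorem precisely ``by checking the number of tiles and the vertex types in the complete table of spherical monohedral triangular tilings'' of Ueno--Agaoka; your write-up simply makes that finite inspection explicit. The only caution is that a matching pair $(f,\ \text{degree multiset})$ is necessary but not by itself sufficient for graph isomorphism, a point your final paragraph already addresses by checking the surviving candidates individually.
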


To state our partial answer of this inverse problem, recall that we cannot divide
the tile of a spherical monohedral quadrangular tiling into two
congruent triangles, if and only if the tile is either \emph{type~2} or
\emph{type~4} of
Figure~\ref{fig:type24_map}~(Figure~\ref{fig:type24_map}), since the tile
of a spherical monohedral quadrangular tiling has necessarily
equilateral adjacent edges~\cite[Proposition~1]{agaoka:quad}.
\begin{theorem}\label{thm:theo} Given a spherical  tiling consisting $\T$ of $F$ congruent
 quadrangles of type~2 or 4, such that
\begin{itemize}
\item
$F=6$ or $8$; or,  
\item the tile is  \emph{convex}
 and the graph of the tiling is $\pdw{F}$ with
 $F\ge10$. 
\end{itemize}
 Then $\T$ has  chart $\P_F$ as in Figure~\ref{fig:chart_P_F} or the mirror,
 and $\T$ is isohedral.
\begin{figure}[ht]
\begin{tabular}{c c}
\begin{minipage}{0.7\textwidth}
\includegraphics[width=10cm]{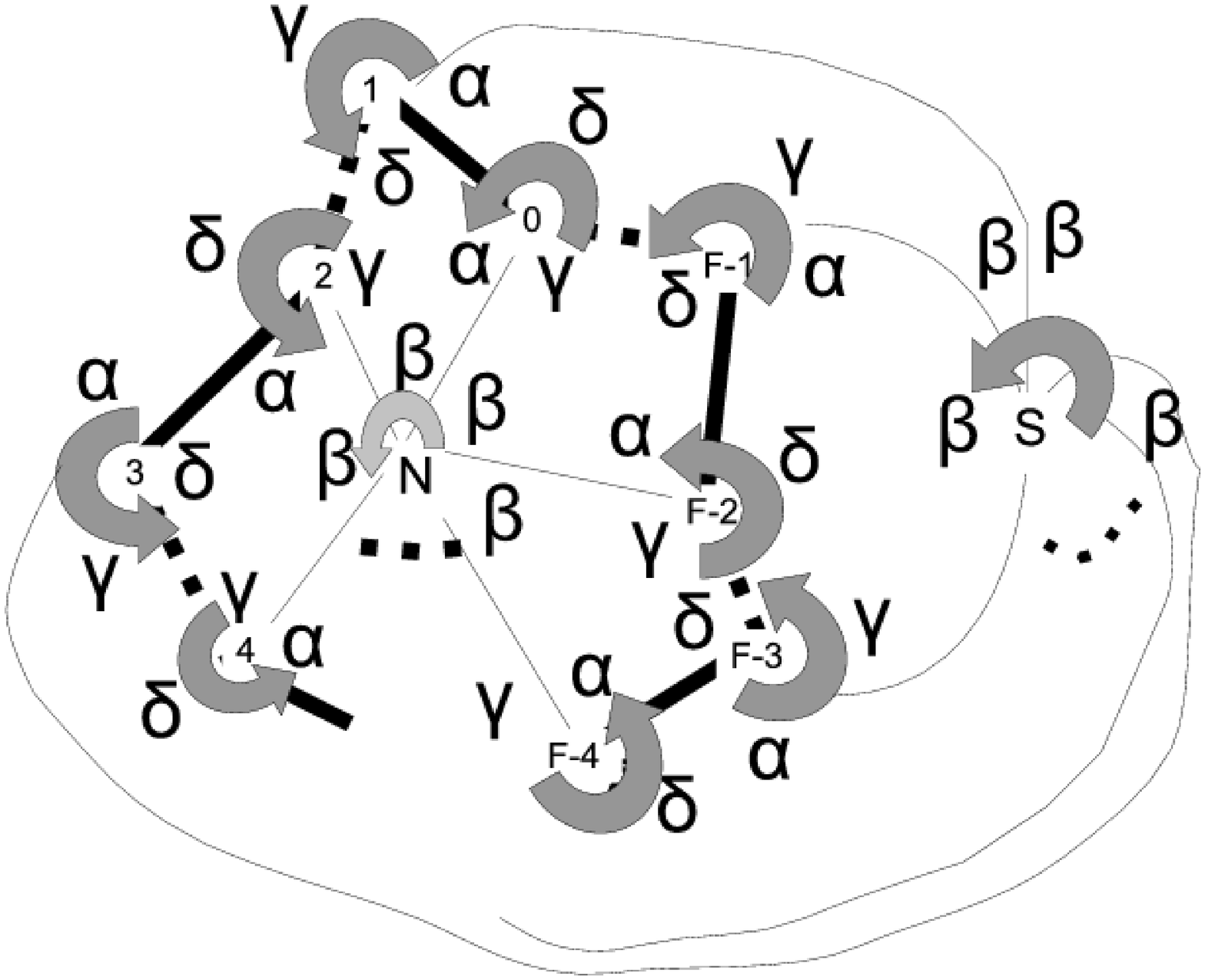}
\end{minipage}&
\begin{minipage}{0.3\textwidth}
\begin{tabular}{c}
\includegraphics[width=2cm]{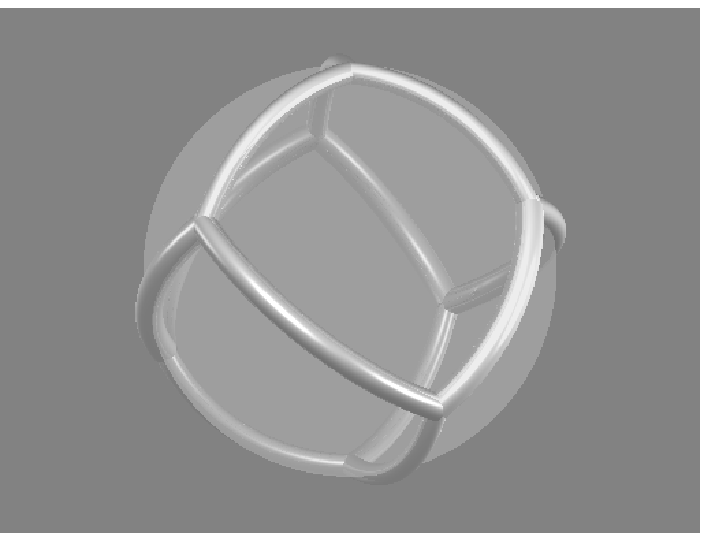}\\
\includegraphics[width=2cm]{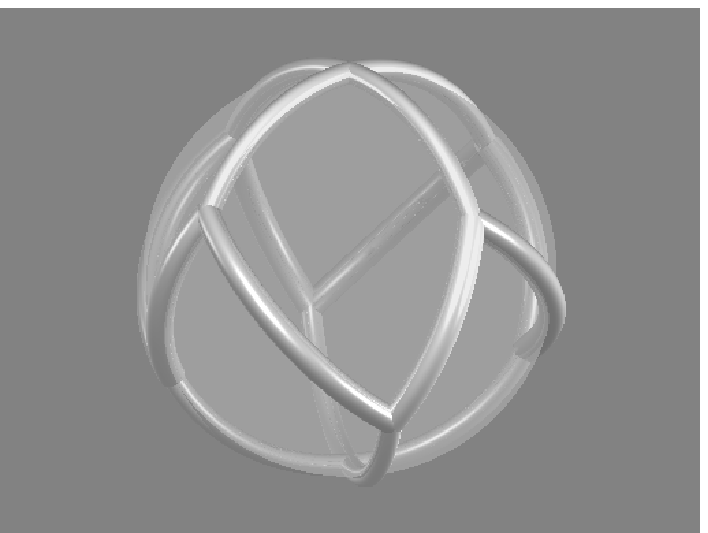}\\
\includegraphics[width=2cm]{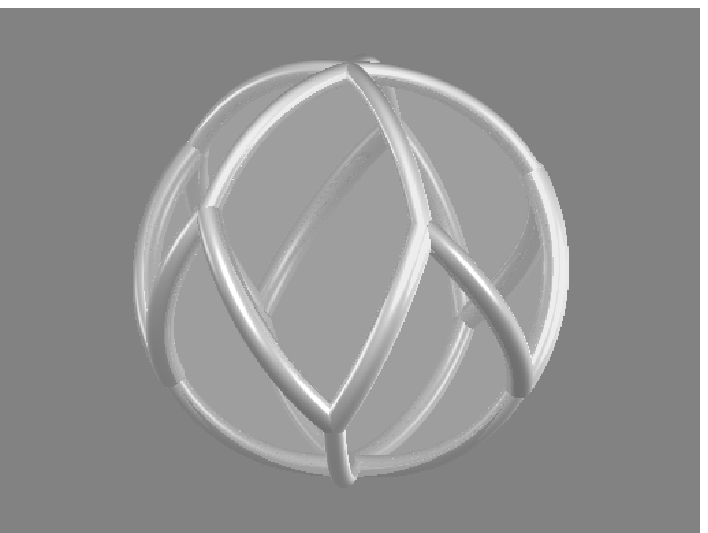}\\
\includegraphics[width=2cm]{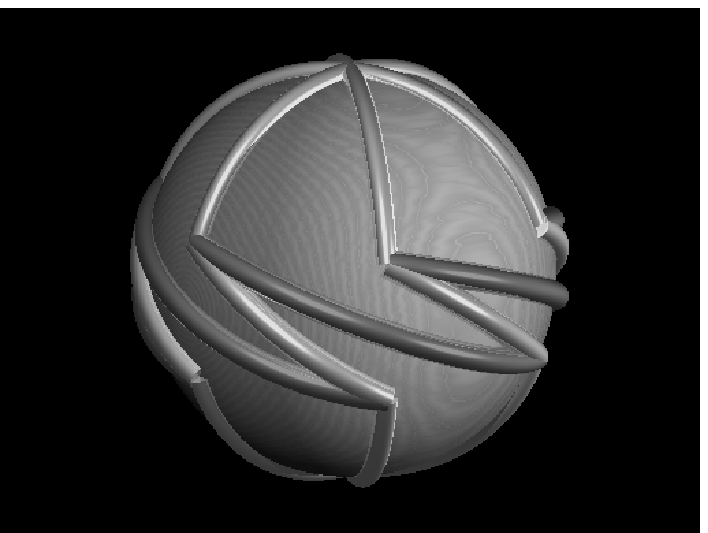}
\end{tabular}
\end{minipage}
\end{tabular}
\caption{ The chart $\P_F$ is defined by the left figure, whee the graph
 is a \emph{pseudo-double wheel of $F$ faces}~(Definition~\ref{def:pdw}). The whirl at each vertex
 indicates the cyclic order for the edges incident to the vertex.  The
 thick edges has length $b$, the dotted edges does length $c$, and the
 other edges does length $a$. When the tile is type~2, we have $a=c\ne
 b$, when the tile is type~4, the lengths $a,b,c$ are mutually distinct.
 The right column are examples of  spherical tilings by 6, 8, 10, 12
 congruent type-2
 quadrangles, from top to bottom. They are isohedral, because of
the axis through the pole and the center, and each axis through the midpoint
 of each non-meridian edge and the center. The twelve tiles of the last
 tiling organizes a non-isohedral tiling give in Figure~\ref{chart:a}.
 \label{fig:chart_P_F}}
\end{figure}
\end{theorem}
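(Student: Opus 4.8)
The plan is to work with the \emph{chart} of $\T$: the underlying graph, together with, at each vertex, the cyclic order of the incident edges, the length ($a$, $b$, or $c$) of each edge, and the measure of each corner. Such a chart comes from a spherical tiling by congruent copies of a fixed tile exactly when three local conditions hold --- the corners around each vertex sum to $2\pi$, the two tiles across an edge present the same length on it, and the cyclic word of lengths-and-corners around each face is a rotation or reflection of that of the prototile --- and, conversely, such a chart determines the tiling up to an isometry of $S^2$ (develop the tiles one at a time). So it suffices to show that, under the hypotheses, the only realizable chart on the relevant graph is $\P_F$ or its mirror, and then that this chart is symmetric enough.

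\emph{Step 1 (the graph).} Euler's formula with $2E=4F$ gives $E=2F$, $V=F+2$, and every vertex of a spherical polyhedral tiling has degree $\ge3$. For $F=6$ the degree sum $4F=24$ over $V=8$ vertices forces every vertex to have degree exactly $3$, and the unique $3$-regular polyhedral quadrangulation of $S^2$ (necessarily on $8$ vertices) is the cube, i.e.\ $\pdw{6}$. For $F=8$ the only degree sequences compatible with $V=10$ and minimum degree $3$ are $(3^8,4^2)$ and $(3^9,5)$, and a finite analysis of how the four corners of one quadrangle can sit around a vertex of degree $4$ or $5$, together with the length-matching condition --- or, alternatively, the classification of small spherical quadrangular tilings from the earlier parts of this series --- leaves only $\pdw{8}$. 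For $F\ge10$ the graph is $\pdw{F}$ by hypothesis. Thus in every case the graph is the trapezohedron on $n=F/2$ sides: apices $N,S$ of degree $n$, an equatorial zig-zag $A_1B_1A_2B_2\cdots A_nB_n$ of degree-$3$ vertices, and faces $NA_iB_iA_{i+1}$ and $SB_iA_{i+1}B_{i+1}$.

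\emph{Step 2 (locating the prototile).} By \cite[Proposition~1]{agaoka:quad} the tile has a pair of equal adjacent edges, so, being of type~$2$ or~$4$ (hence not cuttable into two congruent triangles), its cyclic word of edge-lengths is $a\,a\,b\,c$, with $a=c\ne b$ in type~$2$ and $a,b,c$ distinct in type~$4$, and with the corner $\beta$ of the tile between the two $a$-edges. I would first show that every apex must receive the corner $\beta$ from all $n$ surrounding faces --- giving $n\beta=2\pi$ --- and that the $n$ edges at an apex all have length $a$: the edges around an apex are edge-consecutive across the $n$ faces there, so their lengths and the apex-corners are forced constant, and the word $a\,a\,b\,c$ allows only the value $(a,\beta)$. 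Here, for $F\ge10$, convexity (all corners $<\pi$) is precisely what excludes the remaining a priori admissible assignments at the apex --- those using a corner $\ge\pi$, which are exactly what yields the non-isohedral $F=12$ tiling of Figure~\ref{chart:a} --- while for $F=6,8$ the residual bookkeeping is finite. It then follows that all $2n$ meridian edges have length $a$, the $2n$ equatorial edges carry the labels $b$ and $c$, and imposing the angle-sum $2\pi$ at each degree-$3$ equatorial vertex, together with length-matching and orientation-matching across the shared equatorial edges, forces the labelling and corner assignment to be exactly that of $\P_F$, up to reflection.

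\emph{Step 3 (isohedrality and the obstacle).} The chart $\P_F$ admits the automorphism $\rho$ of order $n$ fixing $N,S$ and sending $A_i\mapsto A_{i+1}$, $B_i\mapsto B_{i+1}$, and the automorphism $\tau$ of order $2$ given by the half-turn about the axis through the centre and the midpoint of one equatorial edge, which swaps $N$ with $S$ and the two families of faces; both preserve all edge-lengths and all corners. Since the tiling is recovered from its chart up to isometry, $\rho$ and $\tau$ are induced by isometries of $S^2$ carrying $\T$ to itself, and $\langle\rho,\tau\rangle$ has order $2n=F$ and acts transitively on the $F$ faces, so $\T$ is isohedral; the mirror chart is identical. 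The main obstacle is Step~2: eliminating every competing realizable chart. For $F\ge10$ the real work is to make the angle-and-length analysis sharp enough to see that convexity removes all of them --- the statement being false for concave tiles, as the $F=12$ example shows --- rather than relying on a crude count; for $F=6,8$ the obstacle is instead the combinatorics already needed in Step~1 to pin the graph down to $\pdw{F}$.
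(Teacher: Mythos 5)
Your overall architecture --- pin the graph to $\pdw{F}$, show the only realizable chart is $\P_F$ up to mirror, then read off isohedrality from the chart's automorphisms --- matches the paper's, and Steps~1 and~3 are essentially sound. The gap is in Step~2, which is where all of the actual work lives. Your key assertion, that the edges around an apex are ``edge-consecutive across the $n$ faces there, so their lengths and the apex-corners are forced constant,'' is false: consecutive tiles around a pole only force each shared meridian edge to carry a single length, and the admissible (corner; edge, edge) triples of the tile, namely $(\beta;a,a)$, $(\gamma;a,c)$, $(\delta;c,b)$, $(\alpha;b,a)$, can be chained around the pole non-constantly. The concave $F=12$ tiling $\A$ of Figure~\ref{chart:a} does exactly this (every third meridian edge at a pole has length $b$), so no purely local argument at the apex can force constancy. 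Nor does convexity enter merely by ``excluding a corner $\ge\pi$ at the apex'': ruling out meridian edges of length $b$ for convex tiles is the content of Theorem~\ref{thm:Forbidden}, whose proof forbids two length-patterns by a case analysis over the ten angle-assignments of Figures~\ref{fig:left_10possibilities} and~\ref{fig:right_10possibilities}, using the angle-inequality lemmas (Lemmas~\ref{lune}, \ref{lem:6.2}, \ref{lem:u}) and, in Lemmas~\ref{lem:2a2d} and~\ref{lem:case_three_forbidden_typhoon}, the convexity or edge-to-edge hypothesis; only after iterating these forbidden patterns does one see that the sole surviving meridian-$b$ assignment is the alternating one of Theorem~\ref{thm:k}, whose tile is necessarily concave.

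The second half of your Step~2 (``imposing the angle-sum at each degree-$3$ equatorial vertex \dots forces the labelling to be exactly that of $\P_F$'') also understates the difficulty: this is Theorem~\ref{thm:kouho2:one}, and for type-2 tiles one must dispose of the competing chart $\Q_F$ of Figure~\ref{fig:chart_Q_F}, whose rejection is not combinatorial bookkeeping but uses a genuinely spherical argument (Lemma~\ref{lem:convex}, via lunes and antipodal points) in the convex case and an edge-length estimate $a>\pi/2$ in the concave case. In short, the proposal correctly locates the obstacle but does not overcome it, and the one concrete mechanism it offers for the crucial step does not work.
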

\begin{figure}[ht]
\begin{tabular}{c c}
\begin{minipage}{0.6\textwidth}
\includegraphics[width=7cm,height=7cm]{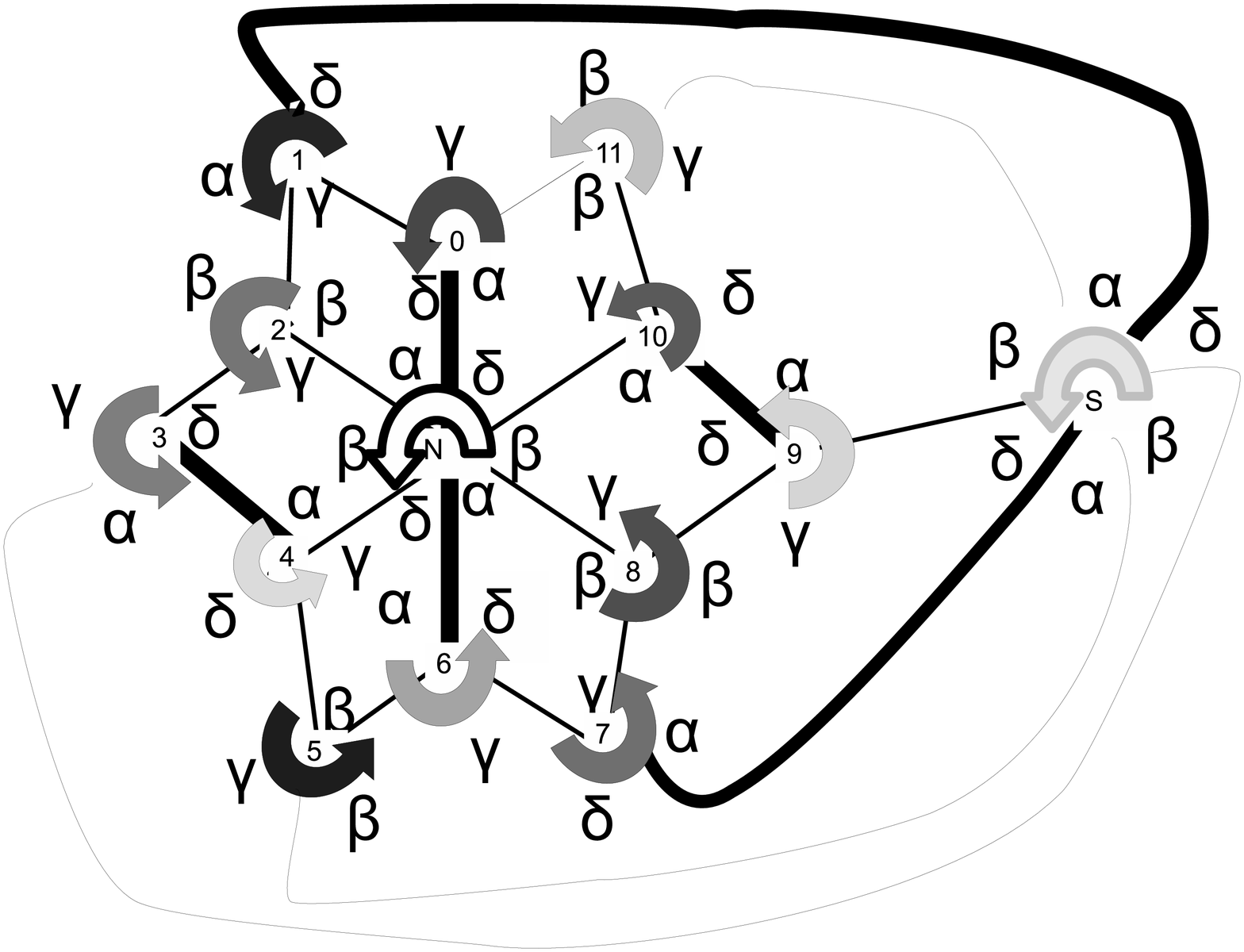}
\end{minipage}&
\begin{minipage}{0.3\textwidth}
\begin{tabular}{l}
\includegraphics[width=2.5cm,scale=0.25]{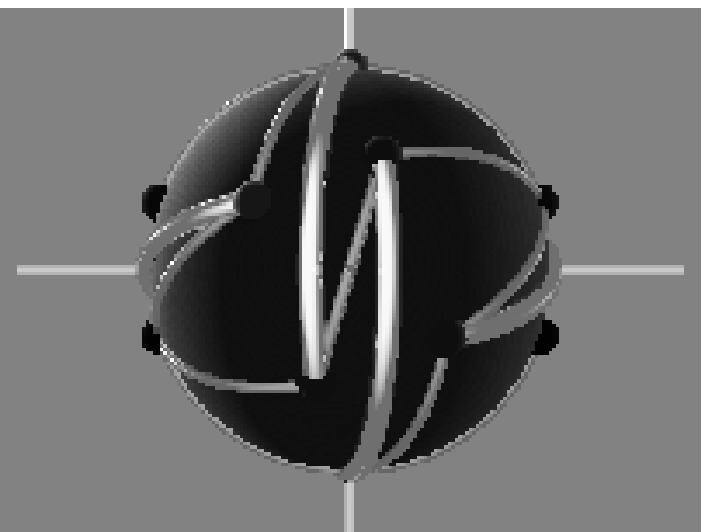}\\
\includegraphics[width=2.5cm,scale=0.25]{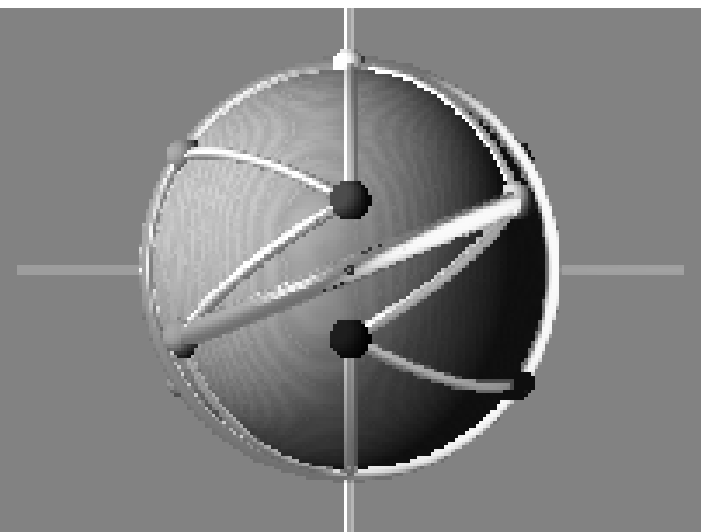}\\
\includegraphics[width=2.5cm,scale=0.25]{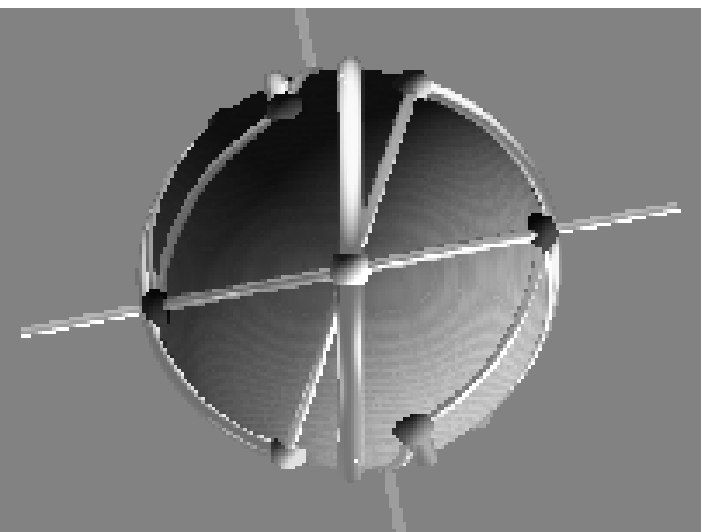}
\end{tabular}
\end{minipage}
\end{tabular}
\caption{The chart $\A$ of the spherical non-isohedral  tiling by twelve
 congruent concave quadrangles.  Thick~(resp. thin) edges of the
 chart correspond to edges of length $b$~(resp. $a$) of the tiling~(see
 Theorem~\ref{thm:k}). The  left upper figure is the view from a $2$-fold rotation axis through the midpoint between the vertices $v_0$
 and $v_1$. 
The  left middle  figure is from a $2$-fold rotation axis
 through the midpoint between the vertices $v_3$ and $v_4$.
The left bottom is from the other $2$-fold rotation axis
 through the poles. The twelve tiles of the 
 tiling organizes an isohedral tiling give in
 Figure~\ref{fig:chart_P_F}~(right bottom). \label{chart:a}}
\end{figure} 
\begin{figure}[ht]\centering
\includegraphics[width=6cm]{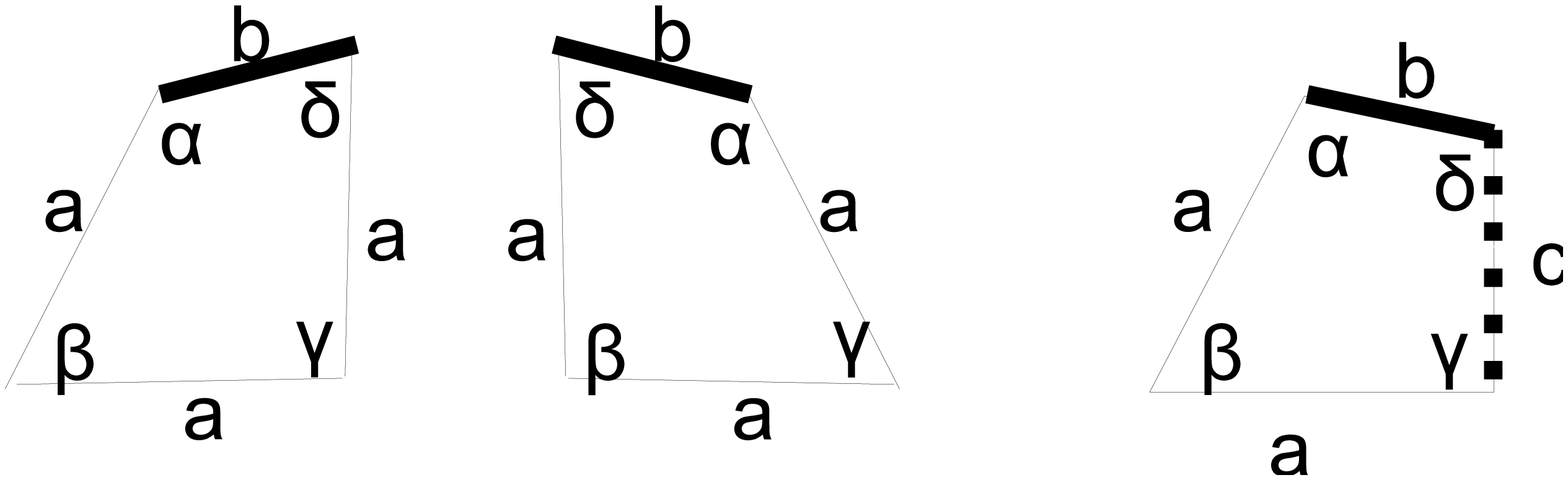}
\caption{Two quadrangles of type~2 and a quadrangle of type~4. The second
 quadrangle of type~2 is obtained from the first quadrangle of type~2 by
 swapping $(\alpha,\beta)$ and $(\delta,\gamma)$ with the length
 assignment unchanged. The chiralty of the first quadrangle of type~2
 is opposite to that of the second quadrangle of type~2.
 The variable $a,b,c$ are for edge-lengths and
 they have different 
 values.
 The variables $\alpha,\beta,\gamma,\delta$ are for angles.  \label{fig:type24_map}}
\end{figure}
In \cite{akama13:_spher_tilin_by_congr_quadr_ii}, we proved that such a tiling $\T$ of Theorem~\ref{thm:theo}
indeed exists.  Actually, we
proved that for every spherical tilings by congruent \emph{possibly
concave} quadrangles over pseudo-double wheels, we explicitly
represented every inner angles and every edge-length by the inner angle
$\gamma$~(and the inner angle $\alpha$, resp.) of the tile.

Our result is a first step toward the classification of the spherical
tilings by congruent quadrangles, because a \emph{quadrangulation of the
sphere} is mechanically obtained exactly from a \emph{pseudo-double
wheel}, by means of finite number of applications of two local
expansions of a graph~\cite{MR2186681}. See 
Proposition~\ref{prop:plantri}.

This paper is organized as follows: In the next section, we present a
setting to classify the spherical monohedral quadrangular tilings in
terms of planar graphs, angle- and length-assignments to the graph, and
matching theory.  In Section~\ref{sec:derivation}, we derive
Theorem~\ref{thm:theo} from Theorem~\ref{thm:Forbidden}. In
Section~\ref{sec:Forbidden}, we prove Theorem~\ref{thm:Forbidden}.  In
the final section, we conjecture that Question~\ref{q:akama} holds for
convex tiles, based on Theorem~\ref{thm:theo}. Then we propose to
classify a reasonable class of spherical monohedral quadrangular
tilings. Finally we show that the use of forbidden patters is a natural
approach to classify spherical tilings by congruent type-2 \emph{convex}
quadrangles, based on computer
experiment~\cite{akama13:_spher_tilin_by_congr_quadr}.

The authors thank Yoshio Agaoka and Takanobu Kamijo. The first
author thanks Gunnar Brinkmann,  Kris Coolsaet, and Nicolas van Cleemput 
 for discussion.

\section{A setting to classify the spherical monohedral quadrangular
 tilings}
\label{sec:setting}
We say a tiling by polygons is \emph{edge-to-edge}, if the vertices
and edges of tiles match. Unless otherwise stated, a tiling always implies an
edge-to-edge spherical tiling of the sphere, and is identified modulo
the special orthogonal group
$SO(3)$. We say two tilings are \emph{mirror image} to each other,
if they are different but identified module the orthogonal group $O(3)$.

\begin{definition}\label{def:map}
\begin{enumerate}
\item
A \emph{map} is $M=((V,E), \{A_v \}_{v\in V})$ such that
\begin{itemize}
\item $G=(V,E)$ is a nonoriented graph, where $V$ is a finite set of
		     vertices and $E$ is a finite set of edges. An edge
		     is a nonoriented pair of distinct vertices.

\item $A_v$ is the set of angles around $v$, that is, a set
$\{(v_1, v, v_2),  (v_2, v, v_3)$, \ldots, $(v_{n-1}, v, v_n), (v_n, v,
      v_1)\}$  
 such that $v_1, \ldots, v_n$ is the list of
       vertices adjacent to $v$. We write an angle
      $(u,v,w)$ by $\angle u v w$. 
\end{itemize}
\end{enumerate} 
\end{definition} 

\begin{definition}[Pseudo-double wheel~\protect{\cite{MR2186681}}]\label{def:pdw}
For an even number $F\ge 6$, a \emph{pseudo-double
wheel\/} $\pdw{F}$ with $F$ faces is a map such that

\begin{itemize}
\item the graph is obtained from a cycle
$(v_0, v_1,
v_2, \ldots, v_{F-1})$, by adjoining a new vertex $N$ to each $v_{2i}$
 $(0\le i<F/2)$ and then by adjoining a new vertex $S$ to each $v_{2i+1}$
 $(0\le i<F/2)$. We identify the suffix $i$ of the vertex $v_i$ modulo
 $F$. 
\item The inner angles at each vertex $v$ is defined naturally by the cyclic
      order at $v$.
The cyclic order 
at the vertex $N$ is defined as follows: the edge $N v_{2i+2}$ is next
      to the edge $N v_{2i}$. 
The cyclic order at the vertex $v_{2i}$ ($0\le i\le F/2$) is: the edge
 $v_{2i} N$ is next to the edge $v_{2i} v_{2i+1}$, which is next to the edge $v_{2i}
v_{2i-1}$.
The cyclic order 
at the vertex $S$ is: the edge $S v_{2i-1}$ is next to the edge $S v_{2i+1}$. 
The cyclic order at the vertex $v_{2i+1}$ ($0\le i< F/2$) is:
the edge $v_{2i+1} S$ is next to the edge $v_{2i+1} v_{2i}$, which is
      next to the edge $v_{2i+1}v_{2i+2}$.
\end{itemize}
\end{definition} 
We call each edge $N v_{2i}$ \emph{northern}, 
each edge $S v_{2i+1}$ \emph{southern}, and the other edges \emph{non-meridian}.
The number of edges is $2F$.

The form of a pseudo-double wheel is a graph consisting of the vertices,
edges and faces of Figure~\ref{fig:chart_P_F}. 

The graph of a spherical tiling by congruent quadrangles is a simple
quadrangulation of the sphere such that the minimum degree three. Here a
\emph{simple quadrangulation of the sphere} is a finite simple graph
embedded on the sphere such that every face is bounded by a walk of four
edges~\cite{MR2186681}. Let $Q_2$ be the class of simple
quadrangulations of the sphere such that the minimum degree three.  Let
$Q_3$ be the class of 3-connected simple quadrangulations of the
sphere. By the theorem of Steinitz, $Q_3$ is the class of
quadrangle-faced polytopal graphs.  A polytopal graph is the graph of
some polytope, where a polytope is, by definition, a bounded region
obtained by a finite number of half spaces. According to Table~2~(simple
quadrangulations with minimum degree three) and Table~3~(3-connected
quadrangulations) of \cite{MR2186681}, there is a simple spherical
quadrangulation $G$ of twelve quadrangular faces with the minimum degree
three but $G$ is not polytopal. By an \textsc{stcq} graph, we mean a
graph of some spherical tiling by congruent quadrangles. We do not know
whether every polytopal graph is an \textsc{stcq} graph or not. Anyway,
a quadrangulation of $Q_2$ is a candidate of an \textsc{stcq} graph.

\begin{proposition}[\protect{\cite[Theorem~2, Theorem~3]{MR2186681}}]\label{prop:plantri}Let
 $i$ be 2 or 3.
For every simple quadrangulation $G\in Q_i$, there is a sequence $G_0,
 G_1,\ldots, G_k=G$ of quadrangulations in $Q_i$  such that $G_0$
 is a
pseudo-double wheel and, for each $i$, $G_{i+1}$ can be obtained from
 $G_i$ by applying  two local
 expansions. Actually, all such are generated by a program
 \texttt{plantri}~\cite{plantri}. See Table~\ref{tbl:degrees} as for $Q_2$.
Specifically, 
the quadrangulations of $Q_i$ with  6 or 8 faces are exactly
 $\pdw{6}$ or $\pdw{8}$. 
\end{proposition}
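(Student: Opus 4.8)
The plan is to prove this as a generation-by-reduction theorem, following the architecture of \cite{MR2186681} (where it appears as Theorems~2 and~3). For each $i\in\{2,3\}$ I would fix the two local expansions together with their inverses, which I call reductions, each of which alters a quadrangulation only inside a bounded local patch and changes the number of faces by the prescribed amount. The argument is then a strictly decreasing induction on the number of faces $F$. The routine half is to check that applying an expansion to a member of $Q_i$ again yields a member of $Q_i$: one inspects the inserted patch and verifies that no parallel edge and no non-quadrilateral face is created, that the minimum degree stays $\ge 3$, and, in the case $i=3$, that $3$-connectivity is preserved. The pseudo-double wheel itself is the $n$-gonal trapezohedron, hence polytopal, hence $3$-connected by Steinitz, so it lies in both $Q_2$ and $Q_3$ and serves as the common base object $G_0$.

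The heart of the proof is the converse \emph{reduction lemma}: every $G\in Q_i$ that is not a pseudo-double wheel admits a reducible configuration, i.e.\ a location at which one of the two reductions applies and returns a smaller member of $Q_i$. Here I would use the Euler relations for a spherical quadrangulation, $E=2F$ and $V=F+2$, which give average degree $2E/V=4F/(F+2)<4$; with minimum degree $\ge 3$ this forces the existence of a vertex of degree exactly $3$. I would then analyze the local neighbourhood of such a vertex and its incident quadrilateral faces, exhibiting a reduction that applies there. When the naive reduction would create a multi-edge, collapse a face, or (for $Q_3$) introduce a $2$-cut, I would argue that either a reducible configuration is forced to exist elsewhere, or the local constraints pin $G$ down to a pseudo-double wheel. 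Running reductions until none applies terminates at an irreducible quadrangulation, which by the lemma is some $\pdw{F_0}$; reversing the steps produces the required sequence $G_0,\dots,G_k=G$ of expansions.

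I expect the reduction lemma to be the main obstacle, in two respects: maintaining simplicity of the graph after a reduction (no parallel edges or digon faces), and, for the class $Q_3$, preserving $3$-connectivity. The second is the genuinely delicate point, since an individual reduction can in principle create a $2$-cut; one must show that a $3$-connectivity-preserving reduction is always available away from the base graph, which is why a single expansion operation does not suffice and two are needed. The clause that all such quadrangulations are enumerated by \texttt{plantri}~\cite{plantri} is a statement about that program implementing the two expansions with isomorph rejection, and so requires no separate mathematical argument.

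Finally I would settle the ``specifically'' clause by direct enumeration, using that every simple quadrangulation of the sphere is bipartite (all faces are $4$-cycles, hence of even length). For $F=6$ we have $V=8$ and $E=12$, so the degree sum is $24$ and minimum degree $3$ forces the graph to be $3$-regular; the unique $3$-regular simple quadrangulation on $8$ vertices is the cube, which is $\pdw{6}$. For $F=8$ we have $V=10$, $E=16$, and degree sum $32$, so the degree sequence is either eight $3$'s and two $4$'s, or nine $3$'s and one $5$. Using bipartiteness and the fact that each face is a $4$-cycle, a short case analysis eliminates the degree-$5$ case and shows that the remaining case is forced to $\pdw{8}$, the $4$-gonal trapezohedron, completing the statement.
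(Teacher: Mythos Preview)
The paper does not prove this proposition at all: it is quoted verbatim as \cite[Theorem~2, Theorem~3]{MR2186681}, and the ``specifically'' clause is justified only by pointing to the enumeration tables of that reference. So there is no in-paper proof to compare against.

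Your sketch is a faithful outline of the strategy actually used in \cite{MR2186681}: define two local expansions (and their inverse reductions) that stay inside $Q_i$, and show by an irreducibility argument that any $G\in Q_i$ that is not a pseudo-double wheel admits some reduction. You correctly flag the reduction lemma, and in particular the preservation of $3$-connectivity for $Q_3$, as the real content; that is indeed where the work in \cite{MR2186681} lies, and your proposal is honest in not claiming to have reproduced it here. The remark about \texttt{plantri} needing no separate argument is also accurate.

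For the ``specifically'' clause your direct enumeration is more self-contained than the paper's appeal to tables. The bipartiteness observation is the right lever: since each colour class must have degree sum $E=16$ when $F=8$, the degree sequence with a single vertex of degree $5$ is ruled out immediately (it would force $5+3(p-1)=16$ in one class, which has no integer solution), and the same counting forces the two degree-$4$ vertices into opposite classes of size $5$ each, after which one still has to check that the resulting quadrangulation is forced to be $\pdw{8}$. That last uniqueness step is the only place your sketch is a bit thin, but it is genuinely short.
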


\begin{definition}[Chart]\label{def:atlas}
\begin{enumerate}
\item A \emph{chart} of a tiling consists of following data:
\begin{itemize}
\item A map $M=((V,E), \{A_v\}_{v\in V})$.

\item A \emph{length-assignment} $L:E\to \Rset_{>0}$.

\item An \emph{angle-assignment} $K$, which is a function from
      $\bigcup_{v\in V} A_v$ to the set of affine combination of the
      variables $\alpha,\beta,\gamma,\delta$ over $\Rset$ subject to
\begin{align}
\sum_{a\in A_v}K(a) = 2\ [\pi\mathrm{rad}]\ \mbox{for each $v\in
 V$}.\label{eq:vertextypes}
\end{align}
\end{itemize} 

\item We say a chart is \emph{of type~$t$}, if each face is of type $t$.

\item The \emph{mirror image} of a chart $\M=((V,E), \{A_v\}_{v\in V}, L, K)$
      is a chart $\M^R=((V,E), \{A^R_v\}_{v\in V}, L, K^R)$ such that 
      $A^R_v:=\{\angle u v w\;;\; \angle w u v\in A_v\}$ and $K^R(\angle
      u v w)=K(\angle w v u)$. 

\end{enumerate}
\end{definition}

When we embed naturally the map $\pdw{F}$ into the sphere, the angles at
each vertex have
positive values with respect to the axial vector from the center to the
vertex.
\begin{definition}[Vertex types]\label{def:types}\label{def:XYtype}
The types of angles are, by definition, variables
 $\alpha,\beta,\gamma,\delta$. If the angles around a vertex $v$ are exactly
 $n_\alpha$ angles of type $\alpha$, $n_\beta$ angles of type $\beta$, 
$n_\gamma$ angles of type $\gamma$ and $n_\delta$ angles of $\delta$,
 then we say the \emph{vertex type} of $v$ is $n_\alpha \alpha + n_\beta \beta
+ n_\gamma \gamma+ n_\delta \delta$.
\end{definition}

\begin{lemma}\label{lem:area}
In a tiling by $F$ congruent quadrangles, every tile has area $4\pi/F$,
 which is the total of the inner angles subtracted by $2\pi$. In other
 words
\begin{align}
 \alpha+\beta+\gamma+\delta-2 = \frac{4}{F}. \label{eq:area}
\end{align}

 Hence there is no vertex of type $\alpha+\beta+\gamma+\delta$.
\end{lemma}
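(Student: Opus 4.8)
The plan is to apply Girard's theorem (the spherical excess formula): a spherical polygon bounded by $n$ great-circle arcs whose interior angles sum to $\sigma$ radians has area $\sigma-(n-2)\pi$ steradians. Each tile here is a spherical quadrangle, so $n=4$ and the area of a single tile equals (the sum of its four inner angles, in radians) $-\,2\pi$.

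Next I would use congruence: the $F$ tiles are pairwise congruent, hence related by isometries of the sphere, which preserve area; since they partition the sphere of total area $4\pi$, each tile has area exactly $4\pi/F$. Equating this with Girard's formula yields (angle sum in radians) $-\,2\pi = 4\pi/F$. Dividing through by $\pi$ and using the convention, already visible in \eqref{eq:vertextypes}, that the quantities $\alpha,\beta,\gamma,\delta$ are angle measures in units of $\pi$ rad (so that a full turn around a vertex is $2$), this becomes $\alpha+\beta+\gamma+\delta-2=4/F$, which is \eqref{eq:area}. Alternatively one can bypass Girard's theorem entirely: since the graph is a quadrangulation, $E=2F$ (each tile has four edges, each edge on two tiles), so Euler's formula $V-E+F=2$ forces $V=F+2$; summing the vertex identity $\sum_{a\in A_v}K(a)=2$ over all $v$ counts every tile-angle once, giving $F(\alpha+\beta+\gamma+\delta)=2V=2F+4$, hence the same conclusion, and the area interpretation then follows from the excess formula applied to one tile.

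For the final assertion, suppose some vertex $v$ had type $\alpha+\beta+\gamma+\delta$, i.e.\ exactly one angle of each type occurred around it. Then \eqref{eq:vertextypes} would give $\alpha+\beta+\gamma+\delta=2$, contradicting \eqref{eq:area} because $4/F>0$ for finite $F$. There is essentially no hard step here; the only points needing a moment's care are the bookkeeping between radians and the paper's normalized angle scale, and the (routine) observation that congruent spherical tiles have equal area.
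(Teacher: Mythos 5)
Your argument is correct and is exactly the one the paper tacitly relies on (the lemma's own statement already invokes the spherical excess formula, and the paper supplies no separate proof): Girard's theorem gives each tile area equal to its angle sum minus $2\pi$, congruence plus the partition of the sphere of area $4\pi$ gives $4\pi/F$, and rescaling by the paper's convention that angles are measured in units of $\pi$ rad yields \eqref{eq:area}, after which a vertex of type $\alpha+\beta+\gamma+\delta$ would force the angle sum to be $2$, contradicting $4/F>0$. Your alternative Euler-formula counting is also valid and nothing further is needed.
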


\begin{remark}\label{remark:mechanize}
Classification of spherical tilings by $F\ge10$ congruent quadrangles of
 type 2 or 4 is partially mechanizable.  For each even number $F\ge10$, generate all 2-connected
 simple quadrangulations $G$ of the sphere such that the minimum degree is
 three and the number of faces is $F$~(cf. Proposition~\ref{prop:plantri}).  Each tile of a spherical monohedral
 quadrangular tiling of type~2 or type~4 matches an adjacent tile at
 \emph{the} edge of length $b$. All such tile-matchings are generated
 mechanically, as the \emph{perfect matchings}~\cite{MR2744811} of the
 dual graph of the tiling's graph $G$~(If a graph with even number of faces
 are embeddable into an orientable, connected compact 2-fold, and all
 the faces are quadrangles, then the dual graph has a perfect
 matching~\cite{Carbonera06onthe}).
The extreme points of the edge of length $b$ have angle $\alpha$ and
$\delta$. By respecting this constraint, we can mechanically generate
all possible angle-assignments.  Every angle-assignment generates a
system of equations, that is, equation~\eqref{eq:area} and
equations~\eqref{eq:vertextypes}. If for every tile-matching, no such
systems of equations has a positive solution
$\alpha,\beta,\gamma,\delta<1$, then the spherical quadrangulation is not
realizable by a spherical tiling by $F$ congruent \emph{convex}
quadrangles of type~2 or 4.
\end{remark}

\section{Two forbidden
 (type-2/type-4) length-assignments and proof of Theorem~\ref{thm:theo}
 \label{sec:derivation}}

 Theorem~\ref{thm:theo} is derived from the following:
\begin{theorem}\label{thm:Forbidden}Given a spherical tiling $\T$ by $F$
 congruent quadrangles of type~2 or 4.
\begin{enumerate}\item \label{assert:forbidden_blade}
If the tile is convex or  $F=6, 8$, then the left pattern of Figure~\ref{fig:Forbidden} and
		       the mirror image are impossible for the
		       length-assignment of the tiling $\T$.

\item \label{assert:forbidden_typhoon}
If the tile is convex or $F=6, 8$, then
the right pattern in
 Figure~\ref{fig:Forbidden} and the mirror images 
are forbidden for the length-assignment of the tiling $\T$.
\end{enumerate}
\begin{figure}[ht]\centering
\includegraphics[width=12cm]{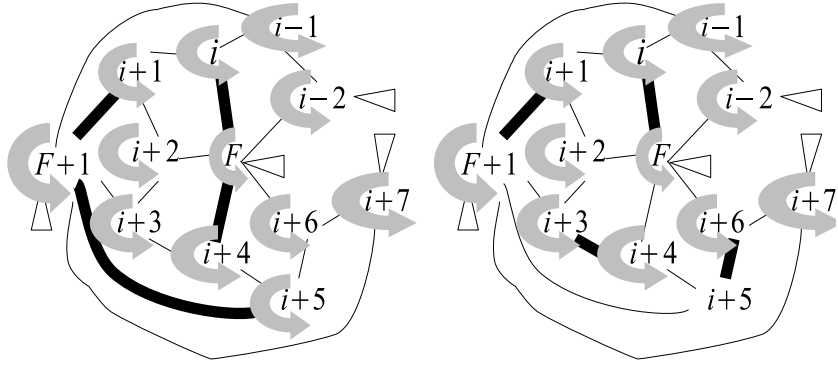} \caption{ For each $t=2,4$,
the two figures and their mirror images are impossible as the
length-assignment of a spherical tiling by congruent quadrangle of
type~$t$, if the tiles are \emph{convex}, or if the number of faces is $8$. Here thick edges are of length $b$ while
the others are of length $a$ or $c$.  A triangle indicates that
nonnegative number of edges \emph{may} occur at this position in the
cyclic order around the vertex (but they need not); When the number $F$
 of tiles is 8, for both patterns, the
pair of vertices designated by $i+6, i+7$ can be identical to the pair
of vertices designated by $i-2, i-1$. But in this case, all  triangle
 symbols are empty.
See Theorem~\ref{thm:Forbidden}.  \label{fig:Forbidden}}
\end{figure}
\end{theorem}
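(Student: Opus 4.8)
The plan is to read each figure of Figure~\ref{fig:Forbidden} as a constraint on the length-assignment $L$ of $\T$ and on the induced angle-assignment $K$, and to contradict it by means of the vertex equations \eqref{eq:vertextypes} together with the area identity \eqref{eq:area}. Throughout I write $G$ for the graph of $\T$; by Proposition~\ref{prop:plantri}, $G=\pdw F$ when $F=6,8$, while in the convex case $G$ is a general simple quadrangulation of minimum degree three.

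First I would fix the local dictionary between $L$ and $K$. In a type-2 or type-4 tile the (unique) edge of length $b$ joins the $\alpha$-corner to the $\delta$-corner; the $\beta$- and $\gamma$-corners are incident only to edges of length $a$ or $c$; and the $\alpha$- and $\delta$-corners are each incident to exactly one $b$-edge and one non-$b$ edge. From this I extract the only combinatorial facts I need: (i) the $b$-edges form a perfect matching of the faces of $G$; (ii) no two $b$-edges are consecutive in the cyclic order around any vertex, since the tile between them would carry two $b$-edges; (iii) at a vertex $v$ of degree $d$ incident to exactly $m$ of the $b$-edges, exactly $2m$ tile-corners at $v$ have type $\alpha$ or $\delta$ and the remaining $d-2m$ have type $\beta$ or $\gamma$ (hence $m\le\lfloor d/2\rfloor$). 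Alongside \eqref{eq:vertextypes} and \eqref{eq:area} I would also use $0<\alpha,\beta,\gamma,\delta<1$ in the convex case, and the explicit value of $4/F$ together with the finiteness of $G$ in the case $F=6,8$.

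For the left (``blade'') pattern, assume it occurs at the displayed vertices $v_{i-2},\ldots,v_{i+7}$. Along the shown chain of thick and non-thick edges I would propagate forced corner-types tile by tile: a thick edge pins both its corners to $\{\alpha,\delta\}$ by (ii)--(iii), and a non-thick edge, once one of its two incident tiles is typed, restricts the type available to the other. The triangle symbols call for a bounded branching on how many extra edges (and whether any of them is a $b$-edge) sit at those positions; the point is that, by (ii), an inserted edge can only enlarge the set of $\beta/\gamma$-corners at its endpoint, which tightens rather than relaxes the shortage of ``$\alpha/\delta$-budget'' that the argument exploits, so the analysis stays finite. Carried around the configuration, the propagation forces some vertex to carry a type $p\alpha+q\delta+r\beta+s\gamma$ with $p+q=2m$ large enough that \eqref{eq:vertextypes} is incompatible with $\alpha,\delta<1$ and \eqref{eq:area} in the convex case; and when $F=6,8$ it instead forces the pattern to wrap around (the possibility, noted in the caption of Figure~\ref{fig:Forbidden}, that $v_{i+6},v_{i+7}$ coincide with $v_{i-2},v_{i-1}$), which over-determines the at most ten vertices of $\pdw 6$ or $\pdw 8$ and is refuted by a short finite check. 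The right (``typhoon'') pattern is handled the same way; its only distinctive feature is that it is organized around one vertex incident to several $b$-edges, so (iii) makes most of that vertex's corners $\alpha/\delta$ at once and the contradiction is reached after a shorter propagation. Granting Theorem~\ref{thm:Forbidden}, Theorem~\ref{thm:theo} follows by the reduction of Section~\ref{sec:derivation}: once both patterns are excluded, the only $b$-matching on $\pdw F$ (resp.\ on $\pdw 6$, $\pdw 8$) free of a forbidden pattern is the one of Figure~\ref{fig:chart_P_F}, the angle-assignment is then forced to $\P_F$ or its mirror, and the rotations exhibited in that figure's caption act transitively on the tiles.

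The step I expect to be the main obstacle is exactly this propagation. Keeping the case tree finite --- branching over the triangle symbols, over which non-thick edge of a type-4 tile is $a$ versus $c$, and over the distribution of $\alpha/\delta$ and of $\beta/\gamma$ among the $2m$ and $d-2m$ corners at each vertex --- and checking that every leaf contradicts \eqref{eq:vertextypes}--\eqref{eq:area}, or for $F=6,8$ the global shape of $\pdw 6$, $\pdw 8$, is where the real work lies. Two points will need care: the claim that inserting a triangle-edge can only worsen the vertex-sum obstruction must be deduced from (ii)--(iii), not assumed; and in the convex case the contradictions are tight, so one must use the sharp bounds $\alpha,\delta<1$ and the exact angle-sum \eqref{eq:area} rather than mere positivity.
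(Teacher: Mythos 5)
There is a genuine gap: your proposal is a strategy outline whose decisive step---the ``propagation'' through the bounded case tree---is explicitly deferred, and the contradiction mechanism you announce for closing the leaves is not the one that actually works. You claim each branch ends because some vertex accumulates an $\alpha/\delta$-budget making \eqref{eq:vertextypes} incompatible with $\alpha,\delta<1$ and \eqref{eq:area}. But in almost every case the linear system arising from the vertex types is perfectly consistent; what kills the case is that the solution forces an equality of two inner angles ($\alpha=\gamma$, $\beta=\delta$, $\alpha=\delta$, $\beta=\gamma$, or $\alpha=\beta$ together with $\gamma=\delta$) that is \emph{geometrically} impossible for a genuine type-2 or type-4 quadrangle with distinct edge lengths. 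The paper supplies these non-degeneracy facts as Lemmas~\ref{lune}, \ref{lem:6.2} and \ref{lem:u} (proved by lune and isosceles-triangle arguments on the sphere, or imported from \cite{akama13:_class_of_spher_tilin_by_i}), and essentially every one of its case rejections terminates in ``\dots hence $\alpha=\gamma$, contradicting Lemma~\ref{lem:6.2}'' or the like. Your write-up never invokes, or even identifies the need for, any such lemma, so the leaves of your case tree cannot be closed by the tools you list ($0<$ angles $<1$, \eqref{eq:vertextypes}, \eqref{eq:area}).

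Beyond that, the structure the paper actually uses is more specific than your sketch: for the left pattern it first pins down the multiset of designated angles around the poles ($A=B=3\alpha+\delta$ or $\alpha+3\delta$, via Lemma~\ref{lem:2a2d}, which is the \emph{only} place convexity or $F=8$ enters for that pattern), rules out type~4 entirely, reduces to ten explicit type-2 charts modulo the automorphism of Remark~\ref{rem:automorphism}, and refutes each by one of three sub-patterns; for the right pattern it similarly reduces to ten charts and uses convexity only in Case~3. Your worry about the triangle symbols ``only worsening the obstruction'' is moot in the paper's argument, because the designated angles are compared pairwise rather than summed against a budget. Finally, the paragraph deriving Theorem~\ref{thm:theo} is not part of the statement you were asked to prove and does not compensate for the missing core of the argument.
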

\begin{remark}\label{rem:automorphism}
By an \emph{automorphism} of a map $M$, we mean any
automorphism~\cite[Section~1.1]{MR2744811} $h$ of the graph that
preserves the cyclic orders of the vertices. Here we let $h$ send any
angle $\angle u v w$ to $\angle h(u) h(v) h(w)$.  For a chart
$\A=(M,L,K)$ and  an automorphism $h$ of the map $M$,  let $h(\A)$
be a chart $(M,\ L\circ h^{-1},\ K\circ h^{-1})$.

The correspondence of the vertices of Figure~\ref{fig:Forbidden}
\begin{align*}
 v_{i+k}\mapsto v_{i+5-k}\qquad(-2\le k\le 7) 
\end{align*}
preserves the cyclic orders of edges at each vertex, if the designated
 triangles in the figure
 are regarded as edges. Moreover the correspondence preserves the
 length-assignment of Figure~\ref{fig:Forbidden}~(left).
\end{remark}

As mentioned in the caption of Figure~\ref{fig:Forbidden}, the two patterns in Figure~\ref{fig:Forbidden} generalize the two
length-assignments~(Figure~\ref{fig:pdw8lghass}) for $\pdw{8}$ such that there
is a meridian edge of length $b$ and every face has only one edge of
length $b$. In the right figure, a meridian edge of length $b$ is
followed by two ``consecutive'' non-meridian edges of length $b$.  The
left figure contains two ``consecutive'' meridian edges of length $b$.
The two figures are not realizable by a spherical tiling by 8 congruent
quadrangles of type~2 or 4, according to
\cite{sakano11:_towar_class_of_spher_tilin}.

\begin{figure}[ht]
\centering
\includegraphics[width=5cm]{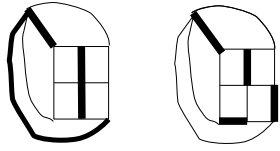}
\caption{The length-assignments of $\pdw{8}$  such that there is
 a meridian edge of length $b$ and every face has only one edge of
 length $b$. Here $8$ and $9$ are the north pole and the south pole.
\label{fig:pdw8lghass}}
\end{figure}

By applying Theorem~\ref{thm:Forbidden}
repeatedly to a slightly
 rotated map  around the axis through the north pole and the south
 pole, the only allowed length-assignment over $\pdw{F}$ satisfies
the assumption of
Theorem~\ref{thm:k}~(\cite[Theorem~2]{akama13:_class_of_spher_tilin_by_i}). That is,
diagrammatically speaking with Figure~\ref{chart:a}, the length $b$ edges
appear ``alternatingly'' in 
the meridian edges and the non-meridian edges, in the northern
hemisphere, and in the southern hemisphere. Here is an excerpt of \cite[Theorem~2]{akama13:_class_of_spher_tilin_by_i}:

\begin{theorem}\label{thm:k}
Assume a chart $\A$ satisfies the following  assumptions:
\begin{enumerate}[{$($}I{$)$}]\renewcommand{\theenumi}{\rm {\Roman{enumi}}}
\item \label{assumption:2} the map of the chart
     is $\pdw{F}$ for some $F\ge 10$, and

\item \label{assert:alternating} there is $b>0$ such that all the edges $N v_{6i}$, $v_{6i+1} S$
 and $v_{6i+3} v_{6i+4}$ have length $b$ for each nonnegative integer
 $i<F/6$ while the other edges do lengths $\ne b$.
\end{enumerate}
Then there  exists a  spherical \emph{non-isohedral} tiling $\T$ by congruent
 quadrangles, uniquely up to special orthogonal transformation.
 Moreover the tile is a \emph{concave} quadrangle of type~$2$. 
 The tiling $\T$ realizes $\A$, where
the length-assignment and the angle-assignment are
      Figure~$\ref{chart:a}$~$(F=12$ in particular\/$)$.

\renewcommand{\theenumi}{\arabic{enumi}}
\end{theorem}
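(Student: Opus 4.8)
The plan is to pin down the chart $\A$ from its length-assignment alone, then realize it by a one-parameter family of spherical type-$2$ quadrangles and determine the parameter, and finally bound the symmetry group of the resulting tiling. First I would determine $\A$: by \cite[Proposition~1]{agaoka:quad} a tile of a monohedral quadrangular tiling has a pair of equilateral adjacent edges, so a type-$2$ or type-$4$ tile carries exactly one edge of length $b$, whose two endpoints are the $\alpha$-corner and the $\delta$-corner of the tile (Figure~\ref{fig:type24_map}, Remark~\ref{remark:mechanize}). Assumption~\ref{assert:alternating} makes every quadrangular face of $\pdw{F}$ contain exactly one $b$-edge (the $b$-edges form a perfect matching of the dual graph), so in each face the two $b$-endpoints must receive the angles $\alpha,\delta$ and the other two corners the angles $\beta,\gamma$; propagating the orientation of the tile across shared edges then fixes the positions of all four angle types throughout the map, up to the global mirror image of Definition~\ref{def:atlas}. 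Imposing the vertex relations~\eqref{eq:vertextypes} at the degree-three vertices $v_i$ and at the poles $N,S$, together with the area identity~\eqref{eq:area}, leaves exactly the angle- and length-assignment of Figure~\ref{chart:a}; in particular it forces all non-$b$ edges to receive a single common length (so the tile is of type~$2$, never type~$4$, under these hypotheses), and it forces one of the angles --- the one I will call $\gamma$ --- to exceed $\pi$, so that the tile is concave.

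Next I would realize $\A$. By the explicit parametrization of \cite{akama13:_spher_tilin_by_congr_quadr_ii}, once the tiling lies over a pseudo-double wheel and the tile is of type~$2$, every inner angle and every edge-length of the tile is a definite function of $\gamma$ alone, so the only remaining freedom is the scalar $\gamma\in(\pi,2\pi)$. Matching this parametrization to the combinatorial data of $\A$ --- equivalently, requiring the $F/2$ tiles around each of $N$ and $S$ to close up --- reduces everything to a single transcendental equation $\Phi_F(\gamma)=0$; I would show that $\Phi_F$ is strictly monotone on the admissible subinterval of $(\pi,2\pi)$ and changes sign there, so that it has a unique root $\gamma^{*}$, and for $F=12$ one can exhibit $\gamma^{*}$ concretely. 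With the tile $T_0$ fixed by $\gamma=\gamma^{*}$, I would place one copy of $T_0$ on the face $Nv_0v_1v_2$ with $N$ at the north pole and propagate copies of $T_0$ over the faces of $\pdw{F}$ in the cyclic order prescribed by the map: since all glued edges then have equal length and all angle sums at every vertex equal $2\pi$, the copies fit together without gap or overlap and cover $S^2$ exactly once, so a tiling $\T$ realizing $\A$ exists (the mirror-image chart being realized by the mirror tiling). Uniqueness up to $SO(3)$ follows because any tiling realizing $\A$ has, across its $b$-edge, a tile with precisely the angle data and side-lengths just found, hence a tile congruent to $T_0$, after which the map $\pdw{F}$ forces the entire layout.

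Finally I would prove non-isohedrality by bounding the symmetry group $G\le SO(3)$ of $\T$. The $b$-edges are metrically distinguished, so $G$ permutes them; and since $\deg N=\deg S=F/2>3=\deg v_i$ for $F\ge10$, every element of $G$ carries $\{N,S\}$ onto itself. Such an element is therefore either a rotation about the $N$--$S$ axis --- which permutes $v_0,\dots,v_{F-1}$ by a shift $v_i\mapsto v_{i+k}$ that must satisfy $6\mid k$ in order to preserve the pattern of $b$-edges (the northern ones occurring at indices divisible by $6$, and similarly for the southern and the non-meridian ones), hence lies in a cyclic group of order at most $F/6$ --- or one of at most $F/6$ further ``flips'' interchanging $N$ and $S$. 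Hence $|G|\le F/3<F$, so by the orbit--stabilizer theorem no $G$-orbit of tiles can have size $F$; as there are $F$ tiles, $G$ does not act transitively on them and $\T$ is not isohedral. (In fact $G$ turns out to be the full dihedral group of order $F/3$, for instance the $D_2$ exhibited for $F=12$ in Figure~\ref{chart:a}.)

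The main obstacle is the realization step: showing that the reduced equation $\Phi_F(\gamma)=0$ has exactly one root in the reflex range. Existence is a sign-change argument, but uniqueness demands genuine control of a transcendental function built out of spherical trigonometry, i.e.\ a monotonicity estimate. A secondary, more bookkeeping-type difficulty lies in the first step: verifying that the vertex and matching constraints really do force the angle-assignment down to the single pattern of Figure~\ref{chart:a}, in particular that type~$4$ is excluded and that a reflex angle is unavoidable.
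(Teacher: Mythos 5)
This paper does not actually prove Theorem~\ref{thm:k}: it is imported verbatim as an excerpt of Theorem~2 of \cite{akama13:_class_of_spher_tilin_by_i}, so there is no in-paper proof to compare you against; I can only judge the outline on its merits. Its architecture (derive the angle-assignment from the length-assignment, realize it through the one-parameter family of \cite{akama13:_spher_tilin_by_congr_quadr_ii}, close up via a transcendental equation in $\gamma$, then bound the symmetry group) is the natural one for this series of papers, and your non-isohedrality step is correct and clean: assumption~(\ref{assert:alternating}) forces $6\mid F$, the degree comparison $F/2>3$ forces every symmetry to preserve $\{N,S\}$, the period-$6$ pattern of $b$-edges caps the rotation subgroup at order $F/6$, and even in $O(3)$ one gets $|G|\le 2F/3<F$, so no orbit of tiles has size $F$.

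The genuine gap is that the two steps carrying all the content are asserted rather than argued, and one of them is attributed to the wrong mechanism. You claim that the linear relations \eqref{eq:vertextypes} and \eqref{eq:area} ``force all non-$b$ edges to receive a single common length'' and ``force one of the angles to exceed $\pi$.'' Linear identities among angle variables can do neither: excluding type~4 is an edge-adjacency argument about the lengths $b$ and $c$ (of the kind that opens the proof of Theorem~\ref{thm:kouho2:one}), and concavity cannot be read off the linear system at all, because --- as \cite{akama13:_spher_tilin_by_congr_quadr_ii} is precisely about --- the combinatorial chart leaves the inner angles undetermined up to a one-parameter family; the reflex angle only appears after the spherical-trigonometric closure condition is solved. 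Likewise ``propagating the orientation of the tile across shared edges'' is not automatic: two tiles meeting along an $a$-edge may a priori meet in either relative orientation, and eliminating the spurious assignments is exactly the case analysis, using Lemmas~\ref{lune}--\ref{lem:u} and geometric facts such as Lemma~\ref{lem:convex}, that occupies the proof of Theorem~\ref{thm:kouho2:one} here. Finally, the realization step --- existence, uniqueness \emph{and concavity} of the admissible root $\gamma^{*}$ of your $\Phi_F$, for every $F$ divisible by $6$ --- is the substantive assertion of the cited theorem; you flag the monotonicity yourself, but note that the later derivation of Theorem~\ref{thm:theo} needs the resulting tile to be concave for \emph{every} admissible $F$ (or needs all $F\ne 12$ to be excluded), so this cannot be deferred to the single case $F=12$.
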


By
Theorem~\ref{thm:k}~(\cite[Theorem~2]{akama13:_class_of_spher_tilin_by_i}),
the only allowed chart in question is exactly that of a spherical tiling by $F=12$
congruent \emph{concave} quadrangles of type~2. However, the tile is
convex, because of the assumption of Theorem~\ref{thm:theo}. Hence
\emph{every spherical tiling by $F\ge8$ congruent \emph{convex}
quadrangles of type~2 or 4 over $\pdw{F}$, every edge of length $b$ is
not meridian}. We can easily show the same holds for $F=6$, as in
\cite{sakano11:_towar_class_of_spher_tilin}.
Because we can prove the following in the rest of this section, 
 Theorem~\ref{thm:theo} follows from Theorem~\ref{thm:Forbidden}.

\begin{theorem}\label{thm:kouho2:one}Given a spherical tiling by $F\ge6$
 congruent possibly concave quadrangles of type~$t$ $(t\in \{2,4\})$ such that the map is $\pdw{F}$
 and all the edges of length $b$ are  non-meridian. Then, the chart modulo mirror
 image is  necessarily $\P_F$.
\end{theorem}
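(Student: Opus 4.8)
The plan is to pin down the chart $\P_F$ in two stages: first the length-assignment, by a purely combinatorial propagation around $\pdw{F}$, and then the angle-assignment, by fitting a reference tile into each face. Throughout I work with the two structural facts recalled in the excerpt: a type-$2$ or type-$4$ tile has \emph{exactly one} edge of length $b$, whose two endpoints carry the angles $\alpha$ and $\delta$ (Figure~\ref{fig:type24_map}), and the two edges incident to its remaining two vertices have length $a$ (for type~$2$, where $a=c$; for type~$4$ one of them is the $c$-edge, the other an $a$-edge, and the $b$-edge is flanked by an $a$-edge and a $c$-edge). I also use that, by Definition~\ref{def:pdw}, every face of $\pdw{F}$ is a $4$-cycle of the form $Nv_{2i}v_{2i+1}v_{2i+2}$ (``northern'') or $Sv_{2i+1}v_{2i+2}v_{2i+3}$ (``southern''), and that each such face contains exactly two non-meridian edges. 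Since by hypothesis every $b$-edge is non-meridian, and each face has a unique $b$-edge, that $b$-edge is one of the face's two non-meridian edges.

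For the length-assignment, write $e_j:=v_jv_{j+1}$. The core of the argument is a propagation around the equatorial cycle. If $e_{2i}$ is a $b$-edge, then, being shared by $Nv_{2i}v_{2i+1}v_{2i+2}$ and $Sv_{2i-1}v_{2i}v_{2i+1}$, it is the unique $b$-edge of both; hence $e_{2i+1}$, the other non-meridian edge of the northern face, is not a $b$-edge, so the southern face $Sv_{2i+1}v_{2i+2}v_{2i+3}$ must have $e_{2i+2}$ as its $b$-edge. Iterating, all $e_{2i}$ ($0\le i<F/2$) are $b$-edges and no $e_{2i+1}$ is; the parity-exchanged statement holds symmetrically, and since every face has a $b$-edge, exactly one of the two cases occurs — the two being mutual mirror images under the orientation-reversing relabeling $v_i\mapsto v_{i-1}$ of $\pdw{F}$. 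So, after possibly replacing $\T$ by its mirror image, I may assume the $b$-edges are $e_0,e_2,\dots,e_{F-2}$. Matching this against the tile shape and propagating once more around the cycle then forces every northern and southern edge to have length $a$ and each $e_{2i+1}$ to have length $c$; this is precisely the length-assignment of $\P_F$.

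For the angle-assignment when the tile is of type~$4$, the cyclic edge-length sequences of the northern and southern faces are $(a,b,c,a)$ and $(a,c,b,a)$, and since $a,b,c$ are distinct each of these admits a unique fit of the tile's cyclic sequence; reading off the angles gives the pole vertices $N,S$ the type $(F/2)\beta$ and each cycle vertex the type $\alpha+\gamma+\delta$, distributed exactly as in Figure~\ref{fig:chart_P_F}. Hence for type~$4$ the chart is $\P_F$ (or its mirror, in the other case above).

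The delicate point — which I expect to be the main obstacle — is the type-$2$ case, where $a=c$, so the edge lengths no longer determine how the chiral tile sits in a face: in each face there are two placements, mirror to one another. To exclude ``mixed'' placements I would combine the vertex equations~\eqref{eq:vertextypes}, the area equation~\eqref{eq:area} of Lemma~\ref{lem:area}, and positivity of the angles. Concretely, each cycle vertex $v_j$ is a $b$-endpoint in exactly two of its three incident faces, so its vertex type has the form $x+y+z$ with $x,y\in\{\alpha,\delta\}$, $z\in\{\beta,\gamma\}$, while $N$ and $S$ have types $m\beta+(F/2-m)\gamma$; going around the cycle and imposing~\eqref{eq:vertextypes} and~\eqref{eq:area} forces $m\in\{0,F/2\}$ and a coherent global choice, a genuinely mixed choice producing either a vertex of the excluded type $\alpha+\beta+\gamma+\delta$ or a non-positive angle. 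This yields $\P_F$ or its mirror. (Alternatively, one can quote the explicit parametrization of all type-$2$/type-$4$ charts over pseudo-double wheels from \cite{akama13:_spher_tilin_by_congr_quadr_ii} and check directly that $\P_F$ and its mirror are the only charts with no meridian $b$-edge.) Stage one and the type-$4$ angle count are essentially bookkeeping on $\pdw{F}$; the type-$2$ reflection ambiguity is where the argument needs genuine care.
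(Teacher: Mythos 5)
Your stage-one propagation of the $b$-edges around the equatorial cycle and your treatment of type~4 are sound and essentially what the paper does. The genuine gap is in the type-2 case, exactly where you flag the difficulty: the mechanism you propose for excluding mixed placements --- the vertex equations \eqref{eq:vertextypes}, the area equation \eqref{eq:area}, and positivity of the angles --- provably does not suffice. The ``alternating'' mixed chart $\Q_F$ of Figure~\ref{fig:chart_Q_F} (pole type $(F/4)(\beta+\gamma)$, equatorial vertex types $2\alpha+\beta=2$ and $2\delta+\gamma=2$, arising when $\beta$'s and $\gamma$'s alternate around the pole, $F\equiv 0 \bmod 4$) satisfies every one of those constraints: the linear system reduces to $\beta=2-2\alpha$, $\gamma=2-2\delta$, $\alpha+\delta=2-4/F$, which is consistent with \eqref{eq:area} and admits a one-parameter family of solutions with all four angles strictly between $0$ and $1$ (for $F=8$, e.g.\ $\alpha=0.8$, $\beta=0.4$, $\gamma=0.6$, $\delta=0.7$, which also passes Lemmas~\ref{lune}, \ref{lem:6.2} and \ref{lem:u}). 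No vertex of type $\alpha+\beta+\gamma+\delta$ appears and no angle is forced to be non-positive, so ``bookkeeping on $\pdw{F}$'' cannot kill it. Eliminating $\Q_F$ is the real content of the theorem, and the paper does it with spherical geometry: for convex tiles, the lune/antipodal-point argument of Lemma~\ref{lem:convex} shows $\angle v_{F-1} N v_0=\pi-\gamma\pi<\beta\pi$ and hence the pole type exceeds $2$; for concave tiles, a separate argument shows the edge length $a$ would have to exceed $\pi/2$, forcing $\delta>1$ from one tile at $N$ and $\delta<1$ from another.

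A smaller but related problem is that your dichotomy ``coherent global choice or contradiction'' also fails in the other direction. Several mixed angle-assignments produce no contradiction at all; instead the linear constraints force $\beta=\gamma=4/F$, hence the tile is an isosceles trapezoid with $\alpha=\delta$, and the chart is $\P_F$ only after relabelling the angles of suitable tiles (this happens when the two poles get different vertex types, and when a vertex of type $\alpha+\beta+\delta$ or $\alpha+\gamma+\delta$ occurs). So in those branches the conclusion is reached by identifying geometrically congruent tilings, not by eliminating charts. Your fallback of quoting the parametrization from the companion paper does not obviously repair this either, since that parametrization takes a chart as input rather than classifying which charts occur.
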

Various spherical tilings by congruent \emph{concave} quadrangles over
pseudo-double wheels are depicted in \cite{akama13:_spher_tilin_by_congr_quadr_ii}.

\def\N{v_F}
\def\S{v_{F+1}}
\medskip
\emph{From now on, the numbers designated at vertices in the figures
 and the symbols $v_i$'s
 have nothing to do with the vertices $v_i$'s introduced in the definition
 of pseudo-double wheels~(Definition~\ref{def:pdw}).}

\begin{convention}\label{conv}
Each displayed vertex is distinct from the others;
Edges that are completely drawn must occur in the cyclic order given in the picture;
Half-edges indicate that an edge must occur at this position in the cyclic order
around the vertex;
A triangle indicates that one or more edges may occur at this position in the cyclic
order around the vertex (but they need not);
If neither a half-edge nor a triangle is present in the angle between two edges in the
picture, then these two edges must follow each other directly in the cyclic order of
edges around that vertex.
\end{convention}

For any property $P(\alpha,\beta,\gamma,\delta)$ for the inner angles $\alpha,
\beta,\gamma,\delta$ of the tile, the \emph{conjugate property}
$P^*(\alpha,\beta,\gamma,\delta)$ is, by definition, a property
$P(\delta,\gamma,\beta,\alpha)$. 

\begin{figure}[ht]\centering
\includegraphics[width=6cm]{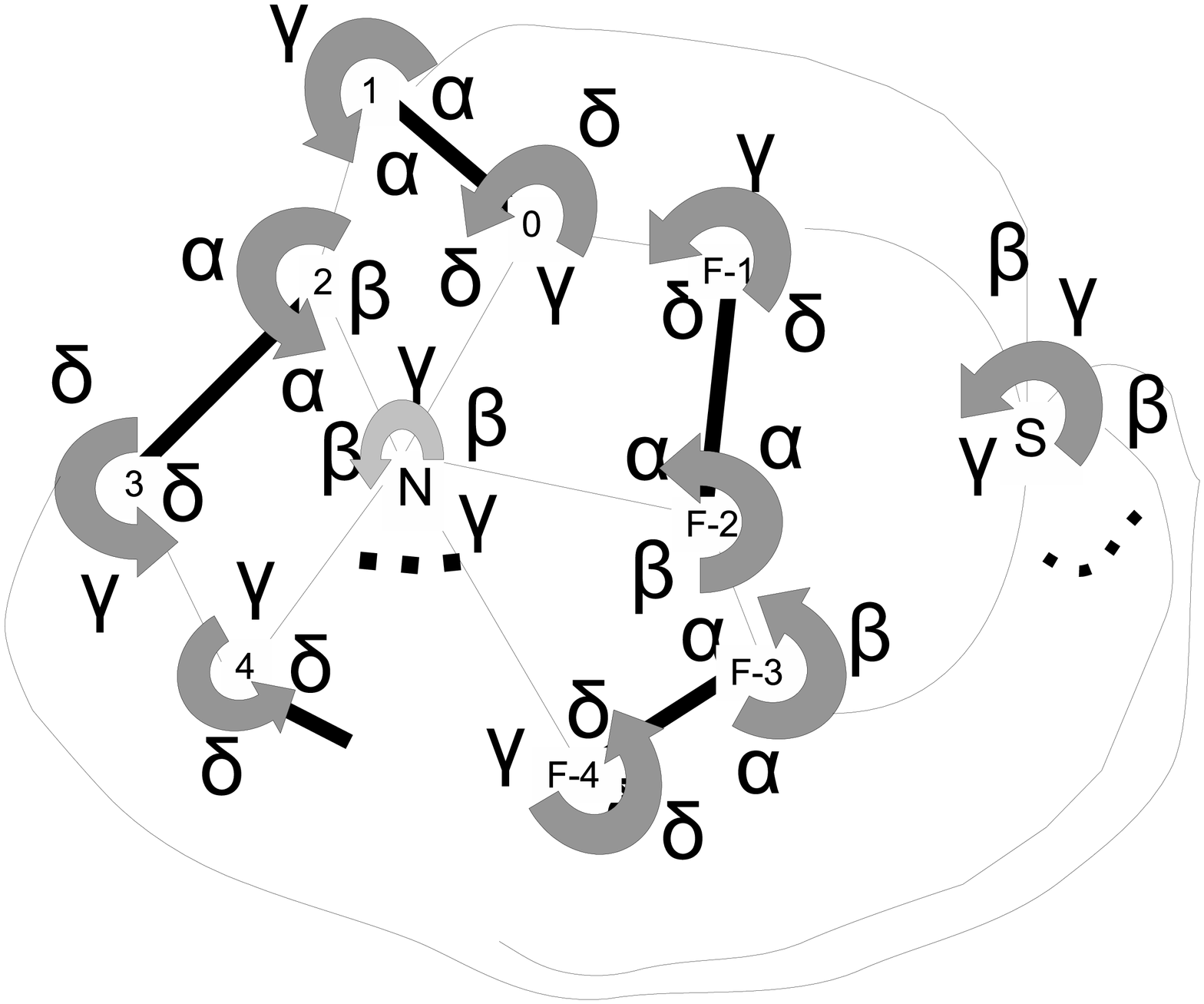}\includegraphics[width=6cm]{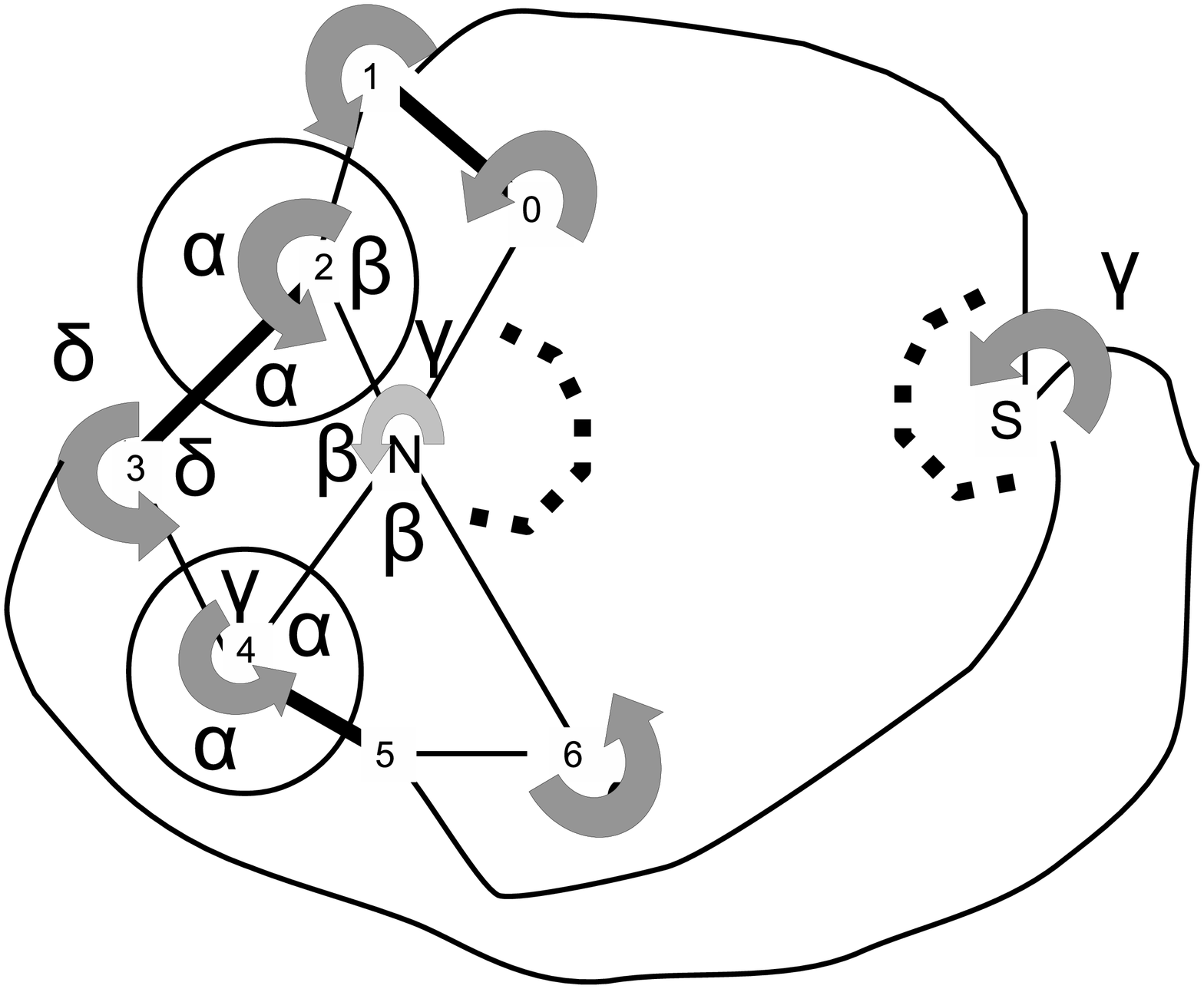}
\caption{The left is the chart $\Q_F$ of type~2. The
chart $\Q_F$ is obtained from the chart 
$\P_F$ by swapping the inner angles $\alpha\leftrightarrow
\delta$, $\beta\leftrightarrow\gamma$ in $F/2$ number of faces.
\label{fig:chart_Q_F}
The right is the chart of type~2 considered in Case~1 of the proof of Theorem~\ref{thm:kouho2:one} \label{fig:chart_QQ_F} }
\end{figure}

The rest of this section  is devoted to the proof of Theorem~\ref{thm:kouho2:one}.

Any non-meridian edge of length $b$ is not adjacent to
 another non-meridian edge of length $b$, because for any $t=2,4$, no
pair of edge of length $b$ are adjacent in a tile of type~$t$.

If the tile is of type~4, then all the edges of length $c$ are
 non-meridian. Otherwise, a meridian edge of length $c$ is adjacent
 to another non-meridian edge of length $b$ or to a meridian edge of
 length $b$. In the former case, the two edges of length $b$ are
 adjacent, which is impossible. The latter case is impossible as
we assumed all the edges of length $b$ are non-meridian. Thus we have
$\P_F$.

Hereafter we assume $t=2$. We have only to prove the case the dotted edges are of length
 $a$ in Figure~\ref{fig:chart_P_F}, because the other case is just a
 mirror image and is proved similarly.

Thus  the type of the pole
 $N$ is  $m \beta + n\gamma$ and  the
 type of the other pole $S$ is $i \beta + j \gamma$ for some $m,n,i,j$.
As the valences of the both poles are equal, we have $m+n=i+j=F/2$.
Then we have $m-i =
-(n - j)$.  

 If the two vertex types  are different, we
have $m-i\ne0$ or $n-j\ne0$.  Therefore
$m\beta+n\gamma = i \beta+ j\gamma$ implies
$\beta=\gamma=4/F$. Then the tile becomes a isosceles trapezoid, which
 implies $\alpha=\delta$. Thus by swapping $(\alpha,\beta)$ and
 $(\delta,\gamma)$ of suitable tiles, the resulting chart is
 $\P_F$, and is still equivalent to the original one.

We consider the other case where the two vertex types of the poles $N$
 and $S$ are equal, i.e., $m=i$ and $n=j$ .

If $m=0$, then we have (1).
If $n=0$, then by swapping inner angles
 $(\alpha,\beta)\leftrightarrow(\delta,\gamma)$ of all the tiles of type~2, we
 have $\P_F$. 

 Consider the case where there is a vertex of type
 $\alpha+\gamma+\delta$, or a vertex of type $\alpha+\beta+\delta$. Then
 Lemma~\ref{lem:area} implies $\beta=4/F$ or $\gamma=4/F$. In the
 former case, we have
 $m \frac{4}{F} + n\gamma=2$. As $m+n=F/2$, we have $4n/F - n\gamma=0$
 and so $\gamma=\beta=4/F$.   In the latter case $\gamma=4/F$, we
 have $\beta=\gamma=4/F$ similarly. Hence the quadrangle of type~2 is an isosceles
 trapezoid, and thus
 $\alpha=\delta$. So in all tiles  contributing the angle
 $\gamma$ to the vertex type of the pole $N$ or $S$,  if  we swap
 $(\alpha,\beta)\leftrightarrow(\delta,\beta)$ in a chart, then the resulting tiling is congruent to the
 original tiling. Therefore, the tiling has chart $\P_F$ of
 type~2 or the mirror image.

Next,  consider the other case, that is, the case where $m n \ne 0$ and
 any vertex
 type is neither $\alpha+\beta+\delta$ nor
 $\alpha+\gamma+\delta$. We have three cases.

Case~1. Some angle $\beta$ is adjacent to another angle $\beta$ at the
 pole $N$;

Case~1$^*$. 
Some angle $\gamma$ is adjacent to another angle $\gamma$ at  the pole $N$; or

Case~2. Angles $\beta$ and angles $\gamma$ appear alternatingly around
     the pole $N$.

\noindent
In all cases, we fix the location of edges of length $b$:
\begin{align}
\mbox{an edge}\ v_{2i} v_{2i+1} \ \mbox{has length $b$} \quad \left(0\le i\le (F-2)/2 \right) \label{gg4}.
\end{align} 
Once the conclusion of Theorem~\ref{thm:kouho2:one} is established in Case~1, the same
 argument with $(\alpha,\beta)$ and
 $(\delta,\gamma)$ swapped establishes the conclusion also in Case~1$^*$. 

Without loss of generality, we can assume, as in Figure~\ref{fig:chart_Q_F}~(left),
$\angle v_{2i} N v_{2i+2}$ is $\gamma\ (i=0);\ \beta\ (i=1)$, both in
Case~1 and Case~2.
By this and the property \eqref{gg4} with $i=0$, we have
$\angle N v_2 v_{2i+1}=\beta\ (i=0);\ \alpha\ (i=1)$.
By~\eqref{gg4},
\begin{align}\angle v_{2j-1} v_{2j} v_{2j+1}=\alpha,\ \mbox{or}\ 
\delta \label{gg8}.
\end{align}
Because no vertex has type $\alpha+\beta+\delta$, 
$\angle v_3 v_2 v_1 = \alpha$.
Hence
\begin{align}
 \angle v_3 S v_1 = \gamma. \label{gg9}
\end{align}
So, by $\angle v_3 v_2 v_1 = \alpha$ and by $  \angle N v_2
v_{2i+1}=\beta\ (i=0);\ \alpha\ (i=1)$, we have
\begin{align}
 2\alpha + \beta  = 2 . \label{gg10}
\end{align}

On the other hand, by \eqref{gg4} with $i=1$, we have
\begin{align}
 \angle N v_4 v_3 = \gamma. \label{gg11} 
\end{align} 

Now consider Case~1~(See Figure~\ref{fig:chart_QQ_F}~(right)).  Without
 loss of generality, we can assume $\angle v_4 N v_6=\beta$. Then the
 property \eqref{gg4} with $i=2$ implies $\angle N v_4 v_5 = \alpha$.
 Because of \eqref{gg8} and the absence of the vertex type $\alpha
 +\gamma + \delta$, we have $ \angle v_3 v_4 v_5 = \alpha$.  By this,
 \eqref{gg11} and $\angle N v_4 v_5 = \alpha$, we have $2\alpha +
 \gamma=2$. By this and \eqref{gg10}, the angle $\beta$ is equal to
 $\gamma$. Thus, as above, the chart is $\P_F$ or the mirror image.

In Case~2, the number $F$ of tiles is a multiple of four and is greater
than or equal to 8. Moreover angle $\beta$ occurs in the type of vertex
$N$ same time as angle $\gamma$.  So, without loss of generality, we
assume for each positive integer $k \le F/4$, it holds that $\angle
v_{4k+2 } N v_{4k+4 } = \beta$ and
\begin{align}
\angle v_{4k} N v_{4k+2} = \gamma. \label{gg14} \end{align}
As with  \eqref{gg9}, we can derive \eqref{gg11} and
$ \angle v_{4k+1  } S  v_{4k+3  } = \gamma$.
Since
$ \angle v_{4k+3 } S  v_{4k+5}$ is
$\beta $,  the opposite angle is
$ \angle v_{4k+3   } v_{4k+4  } v_{4k+5 }  =
 \delta$. 
By \eqref{gg4} and \eqref{gg14}, we have $\angle N v_{4k} v_{4k+1} = \delta$. 
This chart is  $\Q_F$ of type~2, as in
 Figure~\ref{fig:chart_Q_F}~(left).  

 The chart $\Q_F$ is not realizable by some spherical tiling by congruent \emph{concave} quadrangles of
type~2. Assume $\Q_F$ is. Then
the edge length $a$ is greater than
 $\pi/2$. Otherwise, the vertices $v_{2i}$'s are on the northern
 hemisphere and the vertices $v_{2i-1}$ on the southern, from which
 $\alpha,\delta<1$ follows. Hence, as 
 $\beta,\gamma<1$ by $F\ge8$,  the tile is convex on the contrary to the premise. So $a>\pi/2$. Thus,
 the tile $N v_2 v_3 v_4$ implies
  $\delta>1$ but the tile $N v_0 v_1 v_2$ does $\delta<1$, which is a contradiction. 

 No spherical tiling by congruent \emph{convex} quadrangles of
type~2 realizes the chart $\Q_F$ of
Figure~\ref{fig:chart_Q_F}~(left).
Assume otherwise. Apply  the following lemma:
\begin{lemma}[\protect{\cite[Lemma~2]{akama13:_spher_tilin_by_congr_quadr_ii}}]\label{lem:convex}
If there is a quadrangle $A B C D$ of type~2 such that the vertex
$B$ is located on a pole while the length between the opposite vertex $D$ and the other pole
is equal to the lengths of edges $BC$ and $CD$, then
$\angle B C D + \angle D B C $ is the angle of line. If the quadrangle $A B C D$ is convex, then 
$\angle D B A< \angle C B A$. 
\end{lemma}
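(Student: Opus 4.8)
The plan is to reduce the first claim to an identity about an isoceles spherical triangle, exploiting that the two poles are antipodal, and then to read off the convex statement as an immediate corollary. First I would fix notation: let $N$ be the pole on which $B$ lies and let $S$ be the other pole, so that $N$ and $S$ are antipodal, and write $a:=d(D,S)=|BC|=|CD|$, the last two equalities being part of the hypothesis (for a type-$2$ tile the edge $AB$ carries this value too, which we shall not need; the remaining edge $DA$, joining the $\alpha$- and $\delta$-vertices, is the one of the distinguished length $b$). Since $N$ and $S$ are antipodal, $d(P,N)+d(P,S)=\pi$ for every point $P$; taking $P=D$ and using $B=N$ gives $|BD|=\pi-d(D,S)=\pi-a$, which lies strictly between $0$ and $\pi$. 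Hence $BD$ is a genuine geodesic arc and $B,C,D$ span a spherical triangle — non-degenerate, since the tile is — that is isoceles, with legs $|BC|=|CD|=a$ and base $|BD|=\pi-a$.

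Next I would apply the spherical law of cosines $\cos c=\cos a'\cos b'+\sin a'\sin b'\cos\Gamma$ twice in the triangle $BCD$. At the apex $C$, where the legs $a,a$ enclose $\angle BCD$ with the base $\pi-a$ opposite, this gives $\cos\angle BCD=\frac{-\cos a}{1-\cos a}$; at the vertex $B$, where the sides $a$ and $\pi-a$ enclose $\angle DBC$ with the side $a$ opposite, it gives $\cos\angle DBC=\frac{\cos a}{1-\cos a}$. Therefore $\cos\angle BCD=-\cos\angle DBC$, and since both angles lie in $(0,\pi)$ this forces $\angle BCD+\angle DBC=\pi$; that is, the sum is the straight angle, as claimed. (A more geometric route to the same fact, avoiding the second application: let $C^{\ast}$ be the antipode of $C$; then $|BC^{\ast}|=|DC^{\ast}|=\pi-a=|BD|$, so $BDC^{\ast}$ is equilateral with side $\pi-a$, and by the first computation its common angle equals $\angle BCD$; on the other hand $BC$ and $BC^{\ast}$ are opposite rays along the great circle through $B$ and $C$, so $\angle DBC=\pi-\angle DBC^{\ast}=\pi-\angle BCD$.)

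For the convex statement: if $ABCD$ is convex then its diagonal $BD$ lies in the interior of the tile, so the ray $BD$ falls strictly between the rays $BA$ and $BC$; hence $\angle CBA=\angle DBA+\angle DBC$ with $\angle DBC>0$, and $\angle DBA<\angle CBA$ follows. Combined with the first part this even gives $\angle DBA=\angle CBA+\angle BCD-\pi$, so in particular $\angle CBA+\angle BCD>\pi$ for a convex type-$2$ tile in this position; it is that quantitative consequence, applied at a pole, that collides with the angle bookkeeping of the chart $\Q_F$.

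I do not expect any analytic obstacle here: once the diagonal is identified as having length $\pi-a$, the first claim is a two-line trigonometric computation and the convex part is immediate. The care needed is combinatorial — confirming that $B,D$ are the opposite vertices and that $BC,CD$ are precisely the two edges carrying the value $d(D,S)$, both of which are given, and checking non-degeneracy so that all sides and angles of $BCD$ lie strictly in $(0,\pi)$ and the law of cosines is legitimately applicable.
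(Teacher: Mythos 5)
Your argument is correct. Your main route differs from the paper's: you compute both $\cos\angle BCD=\frac{-\cos a}{1-\cos a}$ and $\cos\angle DBC=\frac{\cos a}{1-\cos a}$ by two applications of the spherical law of cosines in the isosceles triangle $BCD$ with sides $a,a,\pi-a$, and conclude the supplementarity from $\cos\angle BCD=-\cos\angle DBC$; the paper instead argues synthetically with the antipode $\overline{C}$ of $C$ (the lune $CB\overline{C}D$ gives $\angle BCD=\angle B\overline{C}D$, the equilateral triangle $B\overline{C}D$ with all sides $\pi-a$ transfers this angle, and the straight angle appears along the great circle through $C$, the relevant vertex, and $\overline{C}$). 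Your parenthetical ``more geometric route'' is essentially the paper's proof, with the supplementary-angle bookkeeping done at $B$ rather than at $D$; doing it at $B$ is arguably cleaner, since it avoids the extra appeal to the isosceles identity $\angle DBC=\angle BDC$ that the paper's version implicitly uses. The trigonometric route buys an explicit check that all quantities are well defined (it makes visible the needed non-degeneracy $a\in(0,\pi)$ and that the triangle angles lie in $(0,\pi)$), at the cost of being less geometric. Your treatment of the convexity clause (the diagonal $BD$ splits the interior angle at $B$, so $\angle CBA=\angle DBA+\angle DBC>\angle DBA$) is the standard argument; the paper leaves this clause unproved, evidently regarding it as immediate, so you have if anything been more complete.
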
 

\begin{proof} Consider the vertex $\overline{C}$ antipodal to $C$. Then,
$CB\overline{C}D$ is a lune, so $\angle B C D=\angle
B\overline{C}D$. Because
$DB=B\overline{C}=D\overline{C}$,  we have a regular triangle, so
 $\angle
\overline{C}DB=\angle B \overline{C} D$.
 Hence $\angle B C D + \angle D B C =\angle  \overline{C} D B + \angle
D B C  = \angle \overline{C} D C$ is the angle of line. \qed\end{proof} 

Then by Lemma~\ref{lem:convex}, $\angle v_{F-1} N v_0=\pi -
\gamma\pi$ because the tile is of type~2 while $\angle v_{F-1} N v_0 <
\beta\pi $ because the tile is convex. Therefore, 
$\beta\pi+\gamma\pi>\angle v_{F-1} N v_0 +\gamma\pi=\pi$. The vertex $N$ has a vertex type
$(F/4)(\beta+\gamma)$ greater than $2$. This is a contradiction. 

Hence, whether the tile is convex or concave,
no spherical tiling by
 congruent quadrangles of type~2 realizes the chart $\Q_F$.
Thus the chart should be $\P_F$.
This completes the proof of Theorem~\ref{thm:kouho2:one}.

Hence the derivation of Theorem~\ref{thm:theo} from Theorem~\ref{thm:Forbidden}
is complete.

\section{Proof of Theorem~\ref{thm:Forbidden}\label{sec:Forbidden}}

The following  Subsection~\ref{subsec:former} and
 Subsection~\ref{subsec:latter} respectively prove
Theorem~\ref{thm:Forbidden}~\eqref{assert:forbidden_blade} and \eqref{assert:forbidden_typhoon}.
For each subsection, we will prove a lemma that generates all candidates
of angle-assignments on the length-assignment, and then will reject them.

We try to isolate the tile's convexity assumption from the proof of
Theorem~\ref{thm:Forbidden}, because by knowing how concave tile can
escape from the two forbidden patterns, there might be a chance to find
a new spherical tiling by concave quadrangles over a pseudo-double
wheel.

We use the following lemmas. A quadrangle of type~2 or type~4 satisfies
the following disjunctions on inner angle's equality.

\begin{lemma} \label{lune}
\begin{enumerate}
\item \label{assert:lune:1}
In a quadrangle of type~2 or type~4, $\alpha\ne\beta$ or $\gamma\ne\delta$.
\item\label{assert:lune:2}
In a quadrangle of type~4, $\alpha\ne\delta$ or $\beta\ne\gamma$.
\end{enumerate}
\end{lemma}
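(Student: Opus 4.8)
The plan is to turn each pair of angle equalities into an extra relation among the edge–lengths of the tile, and then to contradict the combinatorial edge pattern of a type-2 or type-4 quadrangle. Write the tile as $V_1V_2V_3V_4$, with inner angles $\alpha,\beta,\gamma,\delta$ at $V_1,V_2,V_3,V_4$, so that (by the convention recalled before Figure~\ref{fig:type24_map}) the edge $V_4V_1$ joining the $\alpha$- and $\delta$-corners is the unique edge of length $b$. From Figure~\ref{fig:type24_map} one reads off the two facts to be used: the edge $V_2V_3$ opposite $V_4V_1$ has length $\ne b$, for both types; and for a type-4 tile the two edges $V_1V_2$ and $V_3V_4$ flanking $V_4V_1$ have distinct lengths — this last fails for type~2, where $a=c$.

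For assertion~\eqref{assert:lune:1}, assume $\alpha=\beta$ and $\gamma=\delta$. A simple spherical quadrangle has at most one reflex vertex, so two equal inner angles cannot both exceed $\pi$; hence $\alpha,\beta<\pi$ and $\gamma,\delta<\pi$, so the tile is convex. Prolong the edge $V_4V_1$ beyond $V_1$ and the edge $V_3V_2$ beyond $V_2$; since the tile has positive area these two arcs lie on distinct great circles, which meet at a point $P$ on the $V_1V_2$ side, and at its antipode $\bar P$ on the $V_3V_4$ side, and the tile lies in the lune that they cut off. Taking exterior angles at $V_1,V_2$ gives $\angle PV_1V_2=\pi-\alpha=\pi-\beta=\angle PV_2V_1$, so the triangle $PV_1V_2$ is isosceles and $|PV_1|=|PV_2|$; the symmetric computation at $\bar P$ from $\gamma=\delta$ gives $|\bar PV_3|=|\bar PV_4|$. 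On each of the two bounding great circles the four relevant points occur in the cyclic order $P,V_1,V_4,\bar P$ (respectively $P,V_2,V_3,\bar P$); since the four consecutive arcs sum to $2\pi$ and the arc from $P$ to $\bar P$ has length $\pi$,
\[
|PV_1|+|V_1V_4|+|V_4\bar P| \;=\; \pi \;=\; |PV_2|+|V_2V_3|+|V_3\bar P|.
\]
Subtracting and using $|PV_1|=|PV_2|$ and $|V_4\bar P|=|V_3\bar P|$ yields $|V_1V_4|=|V_2V_3|$, contradicting $|V_1V_4|=b\ne|V_2V_3|$.

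Assertion~\eqref{assert:lune:2} is the same argument applied to the other pair of opposite edges. Assuming $\alpha=\delta$ and $\beta=\gamma$ (again forcing the tile to be convex), prolong $V_1V_2$ beyond $V_1$ and $V_4V_3$ beyond $V_4$ to meet at $Q$ on the $V_1V_4$ side, with antipode $\bar Q$ on the $V_2V_3$ side. Now $\alpha=\delta$ makes $QV_1V_4$ isosceles, $|QV_1|=|QV_4|$, and $\beta=\gamma$ gives $|\bar QV_2|=|\bar QV_3|$; the same arc-length bookkeeping along the cyclic orders $Q,V_1,V_2,\bar Q$ and $Q,V_4,V_3,\bar Q$ forces $|V_1V_2|=|V_3V_4|$, which contradicts the type-4 edge pattern. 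This is exactly why type~2 is excluded from \eqref{assert:lune:2}: for a type-2 tile $|V_1V_2|=|V_3V_4|$ is consistent.

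The delicate step is the claim about the cyclic position of the four vertices on the two auxiliary great circles, i.e. that the prolonged edges bound a lune containing the tile with the vertices ordered as stated (so that the arc-length identity in the display is legitimate and $|V_1V_4|$, $|V_2V_3|$ there are the actual edge-lengths, being the minor arcs). This is immediate for a convex spherical quadrangle, and the hypotheses of the lemma were seen to force convexity; it is this verification, together with the (routine) reading of the edge patterns off Figure~\ref{fig:type24_map}, that carries the content. Everything else — the isosceles-triangle observations and additivity of arc length along a great circle — is elementary.
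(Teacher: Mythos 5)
Your proof is correct and is essentially the paper's own argument: both extend the $\alpha\delta$ and $\beta\gamma$ edges to form a lune, use $\alpha=\beta$ and $\gamma=\delta$ to get the two isosceles triangles at the lune's vertices, and conclude the two opposite edges have equal length, contradicting the type-2/type-4 edge pattern. Your write-up merely adds the (welcome) explicit justification of the vertex ordering on the lune and of convexity, which the paper leaves implicit.
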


\begin{proof}\eqref{assert:lune:1} Assume otherwise. Consider a lune
of which boundary
 contains the two edge $\alpha\delta$ and $\beta\gamma$ of the quadrangle.
 Then we have an isosceles triangle beside the edge $\alpha\beta$
 of the tile because $\alpha=\beta$, and we have an isosceles triangle
 beside the edge $ \gamma\delta$ of the tile because $\gamma=\delta$.
Because the two sides of any lune has equal length, the length $b$ of the edge
 $\alpha\delta$ should be equal to the length $a$ of the edge
 $\beta\gamma$. \eqref{assert:lune:2} is proved similarly.\qed
\end{proof}

\begin{lemma}[\protect{\cite[Lemma~2]{akama13:_class_of_spher_tilin_by_i}}] \label{lem:6.2}In a quadrangle of
type~2, $\beta\ne\delta$ and $\alpha\ne\gamma$. In a quadrangle of type~4,
$\alpha\ne\gamma$. 
\end{lemma}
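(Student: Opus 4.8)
The plan is to read off the edge lengths of a type-2 or type-4 tile from Figure~\ref{fig:type24_map} and then to rule out each forbidden coincidence of opposite angles by cutting the tile along a suitable diagonal and exploiting the equilateral adjacent edges. Write the tile as $ABCD$ with the angles $\alpha,\beta,\gamma,\delta$ at $A,B,C,D$, so that $|DA|=b$ (the edge of length $b$ joins the $\alpha$- and the $\delta$-vertex), the two equilateral adjacent edges are $|AB|=|BC|=a$ (the other type-2 tile of Figure~\ref{fig:type24_map} is the mirror image of this one, for which the two claimed inequalities are again in force by conjugacy), and $|CD|=c$, where $c=a$ when the tile is of type~2 and $a,b,c$ are pairwise distinct when it is of type~4. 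In particular $b\ne c$ in both cases, and $a\ne b$ when the tile is of type~2.

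For the inequality $\alpha\ne\gamma$ (both types) I would argue by contradiction. Assuming $\alpha=\gamma$, cut the tile along the diagonal $AC$. Since $|AB|=|BC|$, the triangle $ABC$ is isosceles, so $\angle BAC=\angle BCA$; moreover the tile's angles split as $\alpha=\angle BAC+\angle CAD$ and $\gamma=\angle BCA+\angle ACD$. Subtracting and using $\alpha=\gamma$ gives $\angle CAD=\angle ACD$, so the triangle $ACD$ is isosceles with the sides opposite these equal angles equal, i.e. $|CD|=|DA|$, i.e. $c=b$, contradicting $b\ne c$. For $\beta\ne\delta$ when the tile is of type~2 the same idea applies with the roles shifted by one vertex: now $|CB|=|CD|=a$, so cutting along $BD$ makes $BCD$ the isosceles triangle, $\angle DBC=\angle BDC$, and from $\beta=\angle ABD+\angle DBC$ and $\delta=\angle ADB+\angle BDC$ one obtains $\angle ABD=\angle ADB$, whence $ABD$ is isosceles with $|AD|=|AB|$, i.e. $b=a$, contradicting $a\ne b$.

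The step I expect to be the only real obstacle is making the angle splittings above legitimate for concave tiles, since a diagonal of a simple but non-convex quadrangle need not lie in its interior. However, the coincidence $\alpha=\gamma$ (resp. $\beta=\delta$) cannot occur when the reflex vertex is $A$ or $C$ (resp. $B$ or $D$), because then one of the two angles exceeds $\pi$ while the other does not; in the remaining cases the symmetry $A\leftrightarrow C$ (resp. $B\leftrightarrow D$) of the relevant data forces the sign corrections in the two decompositions to cancel, so the computation is unchanged. Alternatively one can bypass this bookkeeping by the lune device used in the proof of Lemma~\ref{lune}: the reflection of the sphere fixing $B$ and exchanging $A$ and $C$ (available because $|BA|=|BC|$) sends $D$, under the hypothesis $\alpha=\gamma$, to a point $D'$ lying on the great circle through $A,D$ and on the great circle through $C,D$, with $|CD'|=b$ and $|AD'|=c$; this forces $D=D'$, hence $b=c$, once the degenerate antipodal possibility is excluded.
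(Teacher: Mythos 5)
This lemma is not proved in the paper at all: it is imported verbatim from \cite[Lemma~2]{akama13:_class_of_spher_tilin_by_i}, so there is no in-paper proof to compare yours with. The closest in-house model is the lune argument given for Lemma~\ref{lune}, which is in the same spirit as your second, reflection-based variant. Your reading of the edge data is correct ($|DA|=b$, $|AB|=|BC|=a$, $|CD|=c$ with $c=a$ exactly for type~2, and the conjugacy remark about the second type-2 tile is right), and for a \emph{convex} tile both of your diagonal-cutting arguments are complete and correct.

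The genuine gap is the concave case, which is exactly where the lemma is needed: it is invoked inside the proofs of Lemma~\ref{cor:homo} and Lemma~\ref{lem:blade_impossible}, where the paper deliberately avoids assuming convexity. None of your three patches closes it. (a) ``$\alpha=\gamma$ cannot occur when the reflex vertex is $A$ or $C$'' presupposes at most one reflex vertex; on the sphere the angle sum is $2\pi$ plus the area, so two reflex angles are not excluded by arithmetic as they are in the plane. (b) ``The sign corrections cancel by symmetry'' is an assertion, not an argument: the sign at $A$ depends on whether the diagonal arc $AC$ enters the interior angle at $A$, and you have not shown this happens at $A$ if and only if it happens at $C$. (c) The reflection variant is in fact the right fix and needs no convexity at all, but as written it is incomplete in two ways. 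The antipodal alternative $D'=-D$ only yields $b+c=\pi$, which is not a contradiction by itself; you can, however, kill it by noting that $D'=\sigma(D)$ lies at arc-distance $b<\pi$ from $C$ \emph{along the ray from $C$ through $D$}, whereas $-D$ lies at arc-distance $\pi+c$ along that ray. More seriously, you silently assume the great circles through $A,D$ and through $C,D$ are distinct; if $A$, $C$, $D$ are concyclic (equivalently $\delta\in\{0,\pi\}$) the two circles coincide, $D'$ is not pinned down, and one checks that the configuration is consistent with $b\ne c$. That degenerate configuration has to be excluded separately (e.g.\ because a tile with a straight angle at the $\delta$-vertex is not a genuine quadrangle). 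Until (a)--(c) are actually carried out, your proof covers only the convex case.
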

\begin{lemma}[\protect{\cite[Lemma~5]{akama13:_class_of_spher_tilin_by_i}}]\label{lem:u}
Suppose a spherical tiling by  congruent quadrangles of type~2 has (i) a 3-valent vertex $v$
     incident to 3 edges  of length $a$, and (ii) a 3-valent vertex
     incident to an edge of length $a$ and to an edge of length
     $b$. Then
 $\alpha\ne\delta$ and $\beta\ne\gamma$. 
\end{lemma}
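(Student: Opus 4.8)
The plan is to argue by contradiction: assume the tile of such a tiling has $\alpha=\delta$ or $\beta=\gamma$, and contradict hypotheses~(i) and~(ii). First I would pin down the shape of a type-2 tile and merge the two cases. A type-2 tiling uses only the two edge-lengths $a$ and $b$, and each tile carries exactly one edge of length $b$ — the edge joining its $\alpha$-vertex to its $\delta$-vertex — so the remaining three edges of the tile all have length $a$; in particular the $\beta$-vertex, the $\gamma$-vertex, and the edge joining them each lie between two length-$a$ edges. If $\beta=\gamma$, then reflection in the perpendicular bisector of that edge is a symmetry of the tile (it carries the length-$a$ edge at the $\beta$-vertex onto the one at the $\gamma$-vertex with the angle preserved), whence $\alpha=\delta$; the converse is the mirror statement. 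Equivalently, a type-2 tile with $\beta=\gamma$ is a \emph{spherical isosceles trapezoid} (legs of length $a$, one base $b$, the other base $a$), exactly as already used in the proof of Theorem~\ref{thm:kouho2:one}. Hence we may assume $\alpha=\delta$ and $\beta=\gamma$.

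Next I would use~(i) and~(ii) to force every inner angle to equal $2/3$. At the $3$-valent vertex $v$ of~(i), each of the three corners is bounded by two edges of length $a$, and the only angles of a type-2 tile bounded by two length-$a$ edges are $\beta$ and $\gamma$; since $\beta=\gamma$, the vertex relation \eqref{eq:vertextypes} gives $3\beta=2$, so $\beta=\gamma=2/3$. At the $3$-valent vertex $w$ of~(ii), the multiset of incident edge-lengths cannot be $\{a,b,b\}$: among three edges around a vertex every two bound a common corner, so two length-$b$ edges would be consecutive in the boundary of some tile, which is impossible for type~2; hence they are $\{a,a,b\}$. Of the three corners at $w$, the one between the two length-$a$ edges contributes a $\beta$- or $\gamma$-corner (value $2/3$), while the two flanking the length-$b$ edge contribute $\alpha$- or $\delta$-corners (value $\alpha$, since $\alpha=\delta$); summing, $2\alpha+2/3=2$, so $\alpha=\delta=2/3$. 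Thus all four angles equal $2/3$ (and Lemma~\ref{lem:area} then forces $F=6$, though this is not needed).

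Finally I would contradict $a\neq b$. The tile is now an equiangular spherical quadrangle, all angles $2\pi/3$, with three sides of length $a$ and one side of length $b$. Cutting along the diagonal from the $\alpha$-vertex to the $\gamma$-vertex produces an isosceles triangle with two sides $a$ and apex angle $\beta=2\pi/3$, whose base angles are a common value $\psi$, together with a triangle on sides $a$, $b$, and the diagonal, carrying angle $\delta=2\pi/3$ at the $\delta$-vertex and angle $2\pi/3-\psi$ at each of the $\alpha$- and $\gamma$-vertices; since this second triangle has two equal angles, the sides opposite them are equal, i.e. the edge of length $a$ at the $\gamma$-vertex equals the edge of length $b$ at the $\delta$-vertex, so $a=b$, a contradiction. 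The step I expect to be the main obstacle is this last one — not the trigonometry, which collapses to the isosceles-triangle property, but justifying that the diagonal lies in the interior of the tile (Lemma~\ref{lem:u} assumes no convexity) so that the angles at the $\alpha$- and $\gamma$-vertices split as claimed. Since every angle equals $2\pi/3<\pi$ the tile is convex in the angular sense, so this should be routine; if one prefers to avoid it, the reflection symmetry of the first paragraph slices the tile into a two-right-angled Saccheri-type quadrangle with equal summit angles $2\pi/3$ and legs of lengths $a/2$ and $b/2$, which forces those legs equal and again $a=b$. The combinatorial bookkeeping at $v$ and $w$ and the reflection argument are routine by comparison.
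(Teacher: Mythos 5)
Note first that this paper contains no proof of Lemma~\ref{lem:u}: it is imported as Lemma~5 of \cite{akama13:_class_of_spher_tilin_by_i}, so there is no in-text argument to compare yours against. Judged on its own, your proof is correct and almost certainly close in spirit to the intended one. The opening equivalence $\alpha=\delta\Leftrightarrow\beta=\gamma$ for a type-2 tile (via the reflection in the perpendicular bisector of the length-$b$ edge, resp.\ of the edge joining the $\beta$- and $\gamma$-vertices) is exactly the isosceles-trapezoid observation this paper itself invokes in the proof of Theorem~\ref{thm:kouho2:one}, so reducing to the case $\alpha=\delta$ \emph{and} $\beta=\gamma$ is legitimate. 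The bookkeeping at the two $3$-valent vertices is right: every corner at the vertex of~(i) is flanked by two length-$a$ edges and hence is $\beta$ or $\gamma$, giving $3\beta=2$; a $3$-valent vertex cannot carry two length-$b$ edges (they would flank a common corner of a single tile), so the vertex of~(ii) has edge multiset $\{a,a,b\}$ and type $2\alpha+\beta=2$, giving $\alpha=\delta=2/3$. For the endgame, however, you are working harder than necessary and incurring the two technical debts you yourself flag (interiority of the diagonal for a possibly concave spherical tile, and the spherical converse-Saccheri monotonicity): once all four angles equal $2/3$ you have in particular $\alpha=\gamma$, which contradicts Lemma~\ref{lem:6.2} outright — a lemma already stated and available in this paper for exactly this purpose. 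Your diagonal computation is in effect a re-derivation of that lemma in the equiangular case; keeping it is fine as a self-contained alternative, but citing Lemma~\ref{lem:6.2} closes the argument with no geometric caveats.
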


\subsection{The proof of Theorem~\ref{thm:Forbidden}~\eqref{assert:forbidden_blade}
\label{subsec:former}}

By determining the vertex type of the vertices $F$ and $F+1$ in the
following Lemma~\ref{cor:homo}, we generate ten cases by fact  $h$ given
in Remark~\ref{rem:automorphism} is an automorphism of the map Figure~\ref{fig:Forbidden}~(left).
Then we will reject all the cases in Lemma~\ref{lem:blade_impossible}.

\begin{definition}\label{def:AB}In Figure~\ref{fig:Forbidden}~(left), 
let $A$ be the sum of the types of the four
designated inner angles  $\angle v_{i-2} \N
v_i$, $\angle v_i \N v_{i+2}$, $\angle v_{i+2} \N v_{i+4}$, and $\angle v_{i+4}
\N v_{i+6}$ around the vertex $F$, and let $B$ be  the sum of the types
 of  the four designated inner angles $\angle v_{i-1} \S v_{i+1}$, $\angle v_{i+1} \S
v_{i+3}$, $\angle v_{i+3} \S v_{i+5}$ and $\angle v_{i+5} \S
v_{i+7}$ around
the vertex $F+1$.\end{definition}

\begin{lemma}\label{lem:2a2d}If $F=8$ or the tile is convex, then
$2\alpha+2\delta \not\in\{A, B\}$.
\end{lemma}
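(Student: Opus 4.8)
The plan is to reduce to the single assertion $A\neq 2\alpha+2\delta$ and then rule it out. First I would note that the automorphism $h$ of Remark~\ref{rem:automorphism} is an involution of the map underlying Figure~\ref{fig:Forbidden}~(left) that preserves the length-assignment of that figure and interchanges the two poles $\N$ and $\S$ (it carries each even-indexed equator vertex to an odd-indexed one). Hence $h(\A)$ is again a chart realizing Figure~\ref{fig:Forbidden}~(left) with the same tile and the same $F$, and the four designated $\N$-angles of $h(\A)$ carry, as a multiset, the same types as the four designated $\S$-angles of $\A$; so the general statement ``$A\neq 2\alpha+2\delta$'', applied to $h(\A)$, yields ``$B\neq 2\alpha+2\delta$'' for $\A$. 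Thus it suffices to prove $A\neq 2\alpha+2\delta$.

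Suppose, for contradiction, that $A=2\alpha+2\delta$. Each of the four designated angles $\angle v_{i-2}\N v_i$, $\angle v_i\N v_{i+2}$, $\angle v_{i+2}\N v_{i+4}$, $\angle v_{i+4}\N v_{i+6}$ has a single type in $\{\alpha,\beta,\gamma,\delta\}$, and their types sum to $2\alpha+2\delta$, so each is of type $\alpha$ or of type $\delta$ (two of each). By Remark~\ref{remark:mechanize} the length-$b$ edge of a type-2 or type-4 tile joins exactly its $\alpha$-vertex to its $\delta$-vertex; hence an angle of type $\alpha$ or $\delta$ at a vertex $v$ forces one of the two edges of that tile at $v$ to have length $b$, while an angle of type $\beta$ or $\gamma$ forces neither. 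Applying this to the four consecutive northern tiles $T_k=\N v_{i+2k-2}v_{i+2k-1}v_{i+2k}$ ($k=0,1,2,3$), whose $\N$-angles are all of type $\alpha$ or $\delta$: for each $k$ exactly one of the two consecutive meridian edges $\N v_{i+2k-2}$, $\N v_{i+2k}$ of $T_k$ has length $b$ — exactly one, since two edges of length $b$ never occur in one tile. Therefore, along the five consecutive meridian edges $\N v_{i-2},\N v_i,\N v_{i+2},\N v_{i+4},\N v_{i+6}$ at $\N$, the ones of length $b$ must alternate: either all of $\N v_{i-2},\N v_{i+2},\N v_{i+6}$ have length $b$, or both of $\N v_i,\N v_{i+4}$ do and the remaining three do not. (When $F=8$, the wrap-around in the caption of Figure~\ref{fig:Forbidden} identifies $v_{i+6}$ with $v_{i-2}$ and empties every triangle symbol, so $\N$ is $4$-valent and $A$ is its whole vertex type; then $2\alpha+2\delta=2$, whence $\alpha+\delta=1$, and by Lemma~\ref{lem:area} $\beta+\gamma=3/2$.)

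Now the thick/thin markings of Figure~\ref{fig:Forbidden}~(left) exclude one of these two alternating patterns outright — a meridian edge drawn thin cannot have length $b$ — and fix which meridian edges at $\N$ have length $b$. For the remaining pattern I would trace the type of tile along the resulting chain of length-$b$ edges and around the pole $\S$: at each equator vertex on the chain the vertex is an $\alpha$- or $\delta$-vertex (the one opposite the length-$b$ edge of its tile), while at the other vertices of those tiles the angles are of types $\beta$ and $\gamma$; the chirality of the type-2 tile leaves only finitely many placements, which Lemmas~\ref{lune}, \ref{lem:6.2} and \ref{lem:u} narrow further. For each surviving placement I would sum the angles at a well-chosen $3$-valent equator vertex, or at $\S$, and obtain a sum exceeding $2$, contradicting the angle-sum constraint~\eqref{eq:vertextypes}. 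For a convex tile this overflow would come from ``every inner angle is $<1$'' together with $\alpha+\beta+\gamma+\delta=2+4/F$ (Lemma~\ref{lem:area}) — note that $A=2\alpha+2\delta\le2$ already forces $\alpha+\delta\le1$ and hence $\beta+\gamma\ge1+4/F>1$; for $F=8$ it would come from the exact identity $\alpha+\beta+\gamma+\delta=5/2$ and the four-valence of both poles. This proves $A\neq2\alpha+2\delta$, and with the first paragraph, $2\alpha+2\delta\notin\{A,B\}$.

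The step I expect to be the real obstacle is the last one: enumerating the admissible placements of the length-$b$ edges and of the two chiralities of the type-2 tile along the chain of northern tiles, consistently with Figure~\ref{fig:Forbidden}~(left), and then pinning down exactly which of $\alpha,\beta,\gamma,\delta$ sits at each equator vertex so that the angle-sum constraint — i.e.\ convexity together with Lemma~\ref{lem:area} — actually bites. Isolating precisely where convexity is indispensable, as opposed to being merely inherited from the hypotheses of Theorem~\ref{thm:Forbidden}, is the delicate point; the case $F=8$ should by contrast be short, since the wrap-around makes $\N$ and $\S$ four-valent and keeps the analysis finite.
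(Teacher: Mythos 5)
There is a genuine gap: the proof is not actually closed. Your first two steps are sound and match the paper's opening move --- the reduction of $B$ to $A$ via the automorphism $h$ of Remark~\ref{rem:automorphism} is legitimate (the paper uses the same trick for $B\notin\{4\alpha,4\delta\}$), and the observation that $A=2\alpha+2\delta$ forces $2\alpha+2\delta\le 2$ (equality when $F=8$, since then $v_F$ is $4$-valent), hence $\alpha+\delta\le1$, is exactly where the paper starts. But $\alpha+\delta\le1$ only gives $\beta+\gamma\ge 1+4/F$, which is perfectly compatible with convexity ($\beta,\gamma<1$ allows $\beta+\gamma<2$). The contradiction has to come from forcing a \emph{single} angle $\beta$ or $\gamma$ to be $\ge1$, and that is precisely the step you defer to an unexecuted enumeration of ``placements along the chain of length-$b$ edges'' and explicitly flag as ``the real obstacle.'' As written, no contradiction is derived, so the lemma is not proved.

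The missing argument is short and does not require tracing chains or chirality. In Figure~\ref{fig:Forbidden}~(left) the length-$b$ meridian edges at $v_F$ are fixed by the figure (they are $v_iv_F$ and $v_{i+4}v_F$; this is given data, not something to be recovered from $A=2\alpha+2\delta$ as your second paragraph attempts). The two $3$-valent equator vertices $v_i$ and $v_{i+4}$ each receive two angles from the pair of tiles sharing their length-$b$ meridian edge --- each of type $\alpha$ or $\delta$, complementary to the corresponding angles at $v_F$ --- plus one angle $X\in\{\beta,\gamma\}$ from the third tile. Since the four designated angles at $v_F$ sum to $2\alpha+2\delta$, the pair of vertex types of $v_i$ and $v_{i+4}$ must be one of $(\alpha+X+\delta,\ \alpha+X'+\delta)$, $(2\alpha+X,\ X'+2\delta)$, $(X+2\delta,\ 2\alpha+X')$. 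Each type sums to $2$; combining with $\alpha+\delta\le1$ (and with $2\alpha\le1$ or $2\delta\le1$ in the split cases) yields $X\ge1$ or $X'\ge1$, i.e.\ $\beta\ge1$ or $\gamma\ge1$. That contradicts convexity, and for $F=8$ (where equality holds) it contradicts the tiling being edge-to-edge. You should replace your sketched enumeration with this vertex-type computation at $v_i$ and $v_{i+4}$.
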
\begin{proof}
Otherwise by considering the vertex types of the vertices $F$ and $F+1$,
we have $2\alpha+2\delta\le2$~(the equality holds when the number of
faces is $8$), and thus $2\alpha\le 1$ or $2\delta\le1$. Here the vertex
types of $i$ and $i+4$ is $(\alpha+X+\delta, \alpha+X'+\delta)$,
$(2\alpha+X, X'+2\delta)$ or $(X+2\delta, 2\alpha+X')$ for some $X,
X'\in \{\beta,\gamma\}$.  Thus we have $ \beta\ge1$ or $\gamma\ge1$.
But the tiling in question is
 edge-to-edge when $F=8$ or else against the convexity of the tile.\qed
\end{proof}

\begin{lemma}\label{cor:homo} Suppose
the chart is of the form Figure~\ref{fig:Forbidden}~(left). Then if
 $2\alpha+2\delta\not\in\{A,B\}$, then
$A=B=\alpha+3\delta$ or $3\alpha+\delta$ and the
tile is a quadrangle of type~2.
 In this case, the list of such patterns modulo the automorphism of
 Remark~\ref{rem:automorphism} consists of ten patterns in Figure~\ref{fig:left_10possibilities}
 and those with $\alpha$ and $\delta$ swapped.

\begin{figure}[ht]
\centering
\includegraphics[width=12cm]{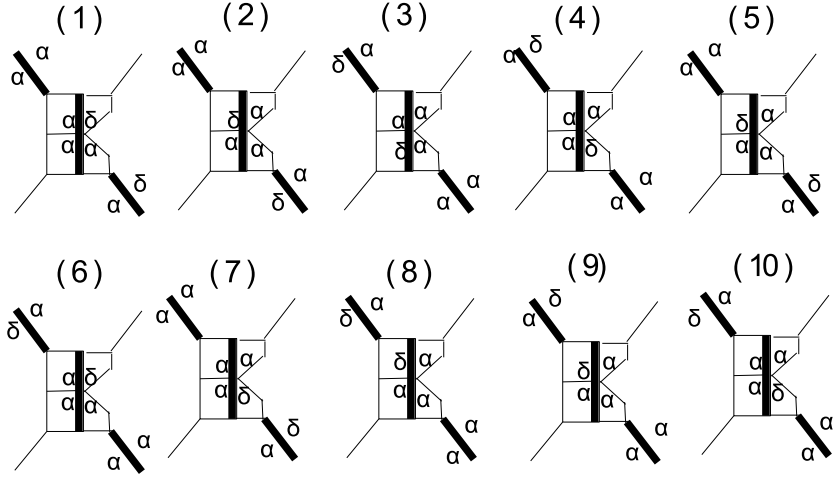}
\caption{The 
ten patterns in this figure and those with $\alpha$ and $\delta$ swapped forms the complete list of charts of the form
 Figure~\ref{fig:Forbidden}~(left) satisfying $F=8$ or the convexity of
 the tile. The central point~(infinite point) is
 $v_F$~($v_{F+1}$) of Lemma~\ref{cor:homo}.
We will prove all of them are forbidden in
 Subsection~\ref{subsec:former}. In the first four cases, the eight designated inner angles is
 unchanged by the automorphism given in Remark~\ref{rem:automorphism}.
 \label{fig:left_10possibilities}}
\end{figure}
\end{lemma}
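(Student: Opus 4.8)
The plan is to pin down the four designated angles at $N$ and the four at $S$ by a face-by-face analysis of the length-assignment of Figure~\ref{fig:Forbidden}~(left), and then to eliminate the survivors with the vertex-sum relation and the inequality lemmas. Each of the eight designated faces carries exactly one edge of length $b$ (every tile has exactly one, by Lemma~\ref{lem:area}), and in a tile of type~2 or type~4 the endpoints of the $b$-edge carry the angles $\alpha$ and $\delta$ while the other two vertices carry $\beta$ and $\gamma$. Hence the angle at $N$ in a designated face lies in $\{\alpha,\delta\}$ when the $b$-edge of that face is one of the two meridian edges at $N$, and in $\{\beta,\gamma\}$ when it is one of the two non-meridian edges of that face; the same holds at $S$. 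Reading this off Figure~\ref{fig:Forbidden}~(left) already reduces $A$ and $B$ to a short explicit list of formal sums of four angle variables.

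The second ingredient is the handedness of the tile. A type-2 or type-4 tile has no reflective symmetry (as one checks directly from the edge- and angle-equalities defining these types), so the whole tiling is assembled from copies of a single handedness; in particular the two northern faces sharing a meridian edge of length $b$ have $N$-angles $\alpha$ and $\delta$, in the order dictated by that handedness, and likewise at $S$, so the handedness rigidly controls which of $\alpha$/$\delta$ (resp.\ $\beta$/$\gamma$) occurs in each designated face. Combining this with Lemma~\ref{lem:area} — the identity $\alpha+\beta+\gamma+\delta-2=4/F$ together with the three-term relations it yields at the $3$-valent equatorial vertices $v_i,v_{i+2},v_{i+4}$ and $v_{i+1},v_{i+3},v_{i+5}$ — and with Lemmas~\ref{lune}, \ref{lem:6.2} and~\ref{lem:u}, I would: (a) discard every candidate in which a designated angle is forced into $\{\beta,\gamma\}$, so that $A,B\in\{4\alpha,\,3\alpha+\delta,\,2\alpha+2\delta,\,\alpha+3\delta,\,4\delta\}$; (b) discard $A$ or $B$ equal to $4\alpha$ or $4\delta$, since these make two adjacent angles of the tile equal, against Lemma~\ref{lune}, or force an angle $\ge 1$; (c) rule out type~4, because its fixed handedness pushes the designated pole-angles into an alternating pattern of sum $2\alpha+2\delta$, against the hypothesis; and (d) tie the $N$-pattern to the $S$-pattern through the equatorial vertices shared by the two fans, forcing $A=B$. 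Since $2\alpha+2\delta\notin\{A,B\}$ by hypothesis, what remains is exactly $A=B\in\{3\alpha+\delta,\alpha+3\delta\}$ with a type-2 tile.

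For the enumeration, fix $A=B=3\alpha+\delta$ (the case $\alpha+3\delta$ is the image under $\alpha\leftrightarrow\delta$, and the opposite chirality of the type-2 tile is the global mirror). The lone $\delta$ among the four designated angles at $N$ can occupy any of four slots, and likewise at $S$, giving $4\times4=16$ patterns; each of them, completed by the forced opposite angles and by the vertex equations, is a bona fide chart of the required form, and the involution $h$ of Remark~\ref{rem:automorphism}, which swaps $N$ and $S$ and is an automorphism of the map, identifies them in pairs except for the four that are $h$-invariant, leaving $(16-4)/2+4=10$ patterns — those of Figure~\ref{fig:left_10possibilities} — together with their $\alpha\leftrightarrow\delta$ images. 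Throughout, the case $F=8$, where $v_{i+6}=v_{i-2}$ and $v_{i+7}=v_{i-1}$ and all triangle symbols collapse, must be tracked separately.

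The main obstacle is the reduction in the second paragraph: each step is local — the unique $b$-edge of a face decides where $\alpha$ and $\delta$ can sit, the three-term relation at a $3$-valent equatorial vertex transports this information to the adjacent face, and Lemmas~\ref{lem:area}, \ref{lune}, \ref{lem:6.2} forbid the degenerate outcomes — but carrying it consistently through all eight designated faces and the $F=8$ wrap-around, and in particular organising the bookkeeping so that the reduction modulo $h$ really yields exactly ten patterns and no more, is where the work lies.
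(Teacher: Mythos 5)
Your skeleton coincides with the paper's: each designated pole angle is adjacent to a thick meridian edge and hence lies in $\{\alpha,\delta\}$, so $A,B\in\{4\alpha,3\alpha+\delta,2\alpha+2\delta,\alpha+3\delta,4\delta\}$; one then excludes $4\alpha,4\delta$, excludes type~4, shows $A=B$, and counts the orbits under $h$ (your $(16-4)/2+4=10$ is the same count as the paper's $\binom{4}{2}+4$). But the three exclusion steps, which are the substance of the lemma, are not actually carried out, and the justifications you sketch would fail. In (b), $A=4\alpha$ does not ``make two adjacent angles of the tile equal'': the two adjacent $\alpha$'s at the pole belong to \emph{different} tiles, so Lemma~\ref{lune} says nothing; and since $A$ is only part of the angle sum at $v_F$ (the triangles may hide further angles), $4\alpha\le2$ forces no angle to reach $1$. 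The real exclusion is a chain of vertex-type computations through $v_{i+2},v_{i+3},v_{i+4}$ ending in $\beta=\delta$ (type~2, against Lemma~\ref{lem:6.2}) or in a vertex incident to two $c$-edges (type~4), and it needs Lemma~\ref{lem:u} along the way.

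Step (c) is the most serious problem. The ``single handedness'' premise is false --- congruence admits mirror images, and Figure~\ref{fig:type24_map} explicitly exhibits the two chiralities of a type-2 tile --- and the argument is self-defeating: if handedness forced the designated pole angles into an alternating pattern of sum $2\alpha+2\delta$, the same reasoning would apply to type~2 and contradict the lemma's own conclusion that $A=B=3\alpha+\delta$ occurs there. The only cheap local constraint for type~4 is that two designated angles sharing a $b$-meridian edge cannot both be $\alpha$ (the far endpoint would carry two $c$-edges); this kills $4\alpha$ and $3\alpha+\delta$ but leaves $\alpha+3\delta$ and $4\delta$ alive, and those require the longer derivation of $\beta+2\gamma=2=2\alpha+\beta$, hence $\alpha=\gamma$, against Lemma~\ref{lem:6.2}. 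In (d) you name no mechanism; the paper obtains $A=B$ from $\alpha\ne\delta$, which follows from Lemma~\ref{lem:u} applied to the $3$-valent vertices $v_i$ and $v_{i+2}$ once the tile is known to be of type~2. Finally, a small point: that each tile has exactly one edge of length $b$ comes from the definition of types~2 and~4 (Figure~\ref{fig:type24_map}), not from Lemma~\ref{lem:area}.
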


\begin{proof} We claim
\begin{align}A\not\in\{4\alpha, 4\delta\}.\label{4a4d}\end{align} 
Consider the case where the tile is a quadrangle of type~4. If $A=4\alpha$, then
 the vertex $i$ is incident to two edges of length $c$, which is a
 contradiction. If $A=4\delta$, then the edge $v_{i+2} \N $ has
 length $c$. Therefore the edge $\S v_{i+3}$ does so and the angle
 $\angle v_{i+1} v_{i+2} v_{i+3}$ as well as the angle $\angle v_{i+3} v_{i+4}
 v_{i+5}$ 
has type $\beta$. As the edge
 $v_{i+5} \S $ and the edge $v_{i+1}
\S$  have length $b$ and $A=4\delta$, the angle $\angle
 \N  v_{i+2} v_{i+1}$ and the angle $\angle v_{i+3} v_{i+2}
\N$ have type $\gamma$. By comparing the type $\beta+2\gamma$ of vertices $i+2$
 and that $2\alpha+\beta$ of $i+4$, we have $\alpha=\gamma$, which contradicts against
 Lemma~~\ref{lem:6.2}.  

Consider the case where the tile is a quadrangle of type~2. Assume $A=4\alpha$.
Then the types of the vertex ${i+2}$ and the vertex ${i+3}$ are both
\begin{align}
2\beta+\gamma.\label{2bg}
\end{align}
Otherwise the type of vertex $i+2$ is $3\beta$ and thus that of vertex
 $i+3$ is $2\gamma+Y$ for some $Y\in \{\beta,\gamma\}$. Then
 $\beta=\gamma=2/3$. As the vertex $i$ is 3-valent and incident to a
 length-$a$ edge and a length-$b$ edge, we have a contradiction against
 Lemma~\ref{lem:u}.

 Hence the vertex $i+4$ has type $2\delta+\gamma$. By \eqref{2bg}, we
 have $\beta=\delta$, which contradicts against Lemma~\ref{lem:6.2}. We
 can similarly prove $A\ne4\delta$ for the tile being a
 quadrangle of type~2. This completes the proof of \eqref{4a4d}.
\medskip

We can prove similarly $B\not\in \{4\alpha, 4\delta\}$ by using the
 automorphism $h$ given in Remark~\ref{rem:automorphism}. So, by Lemma~\eqref{lem:2a2d}
 and \eqref{4a4d},
\begin{align}\label{claim:aho}
\{ A, B\}\subseteq\{3\alpha+\delta,\alpha+3\delta\}. 
\end{align}

We prove that the tile is not a quadrangle of type~4. Assume otherwise.
Then none of $A$ and $B$ is $3\alpha+\delta$, because two occurrences of
 inner angle
$\alpha$ necessarily share an edge of length $b$, so the other end-point
is incident to two edges of length $c$, a contradiction. Moreover we
have $A\ne\alpha+3\delta$. Assume otherwise. When $\angle v_i \N
v_{i+2}\ne\delta$, the angle is $\alpha$. By $A=\alpha+3\delta$, we have
$\angle v_{i+4} \N v_{i+2} = \delta$ and $v_{i+2} \N=c$, which is a
contradiction. Therefore $\angle v_i \N v_{i+2}=\delta$. Thus $\N
v_{i+2}=c=\S v_{i+3} = \S v_{i-1}$. It follows that the vertex $i+2$ has
type $\beta+2\gamma=2$. On the other hand, the assumption
$A=\alpha+3\delta$ implies $\angle v_i \N v_{i-2} = \delta $ or $\angle
v_{i+4} \N v_{i+6} = \delta$, because the tile is of type~4. In the
former case, the vertex $i$ has type $2\alpha+\beta$ while in the latter
case, the vertex ${i+4}$ has type $2\alpha+\beta$. By comparing it with
the type of the vertex $i+2$, we have $\alpha=\gamma$ in either
case. This contradicts against Lemma~\ref{lem:6.2}. Finally, we have
$B\ne\alpha+3\delta$. Otherwise, by a similar argument, the length of
edges $\S v_{i+3}$, $v_{i+2} \N$ and $v_{i+6} \N$ are all $c$, the type
of the vertex $i+3$ is $2\gamma+\beta$, the types of the angles $\angle
\S v_{i+1} v_{i+2}$ and $\angle \S v_{i+5} v_{i+4}$ are both $\alpha$,
and the types of $\angle v_i v_{i+1} v_{i+2}$ and $v_{i+4} v_{i+5}
v_{i+6}$ are both $\beta$. Since we assumed $B=\alpha+3\delta$, the type
of $\angle v_{i+1} \S v_{i-1}$ or the type of $\angle v_{i+7} \S
v_{i+5}$ is $\delta$, from which $2\alpha+\beta$ follows. Therefore
$\alpha=\gamma$. This contradicts against Lemma~\ref{lem:6.2}.  Hence
the tile is  of type~2, and thus we can apply
Lemma~\ref{lem:u}. As the vertices $v_i$ and $v_{i+2}$ satisfy the
conditions~(ii) and (i) of Lemma~\ref{lem:u}, we have $\alpha\ne\delta$.
Then $A$ is $B$, because otherwise \eqref{claim:aho}
implies $\alpha=\delta$.  

The four designated inner angles around the vertex $F$ bijectively
correspond to those around the vertex $F+1$ via the automorphism $h$
given in Remark~\ref{rem:automorphism}.  The distinguished type, say
$\delta$, is aligned to exactly one of the four designated inner angles
around the vertex $F$, and so is to exactly one of the four designated
inner angles around the vertex $F+1$. Hence, there are $\binom{4}{2}=6$
pairs of non-$h$-equivalent inner angles and are 4 pairs of
$h$-equivalent inner angles. This ends the proof of Lemma~\ref{cor:homo}.
\qed\end{proof}

The following lemma forbids the ten patterns of Figure~\ref{fig:left_10possibilities}.
\begin{lemma}\label{lem:blade_impossible}
 A chart of a spherical tiling by congruent quadrangles of type~2 is
none of the three patterns (\casenine),(\casetwofour),(\theothercases) presented in Figure~\ref{fig:lem6.4} and is none of
 the three  (\casenine),(\casetwofour),(\theothercases) with $(\alpha,\beta)$ and $(\delta,\gamma)$ swapped in the
 same Figure.
\begin{figure}[ht]
\centering
\begin{tabular}{c c c}
\includegraphics[width=10cm,height=4cm]{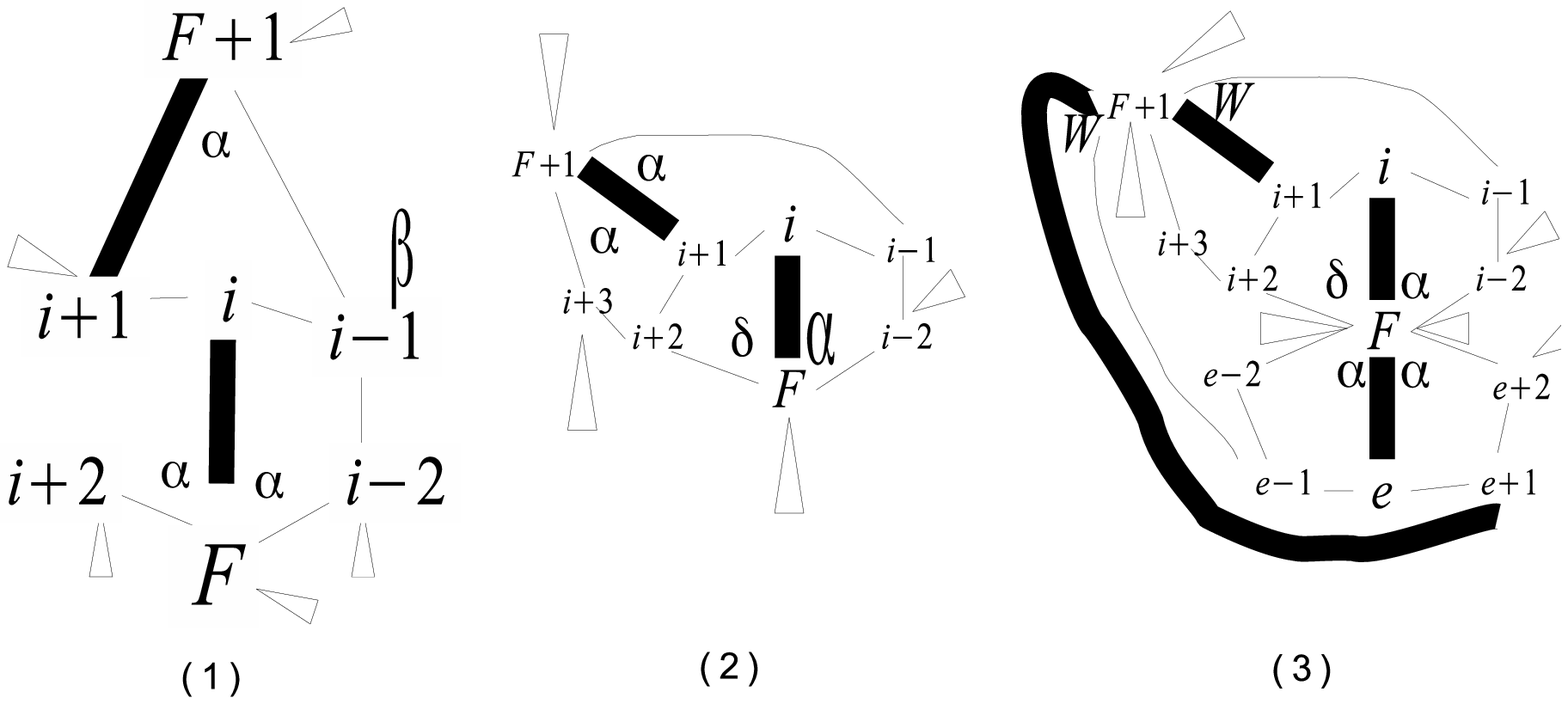}
\end{tabular} 
\caption{In the rightmost
 figure, the designated angle $\delta$ around the vertex $F$ is
 exchangeable with any designated angle $\alpha$ around the
 vertex.\label{fig:lem6.4} \label{fig:blade_impossible}}
\end{figure}
\end{lemma}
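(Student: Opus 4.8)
The plan is to treat the three patterns $(\casenine)$, $(\casetwofour)$, $(\theothercases)$ of Figure~\ref{fig:lem6.4} one at a time, exploiting the tight arithmetic coming from equation~\eqref{eq:area}, $\alpha+\beta+\gamma+\delta-2=4/F$, together with the vertex-type equations~\eqref{eq:vertextypes} and the inequalities forced by $F=8$ or convexity (namely each inner angle is $<1$, and $\le 1$ is attained only in the boundary case $F=8$). First I would record, from Lemma~\ref{cor:homo} and its proof, that in every one of these patterns the tile is of type~2, the vertices $v_i$ and $v_{i+2}$ satisfy the hypotheses (i),(ii) of Lemma~\ref{lem:u}, hence $\alpha\ne\delta$, and moreover $A=B\in\{3\alpha+\delta,\alpha+3\delta\}$ — by the $h$-symmetry of Remark~\ref{rem:automorphism} it suffices to handle $A=B=\alpha+3\delta$, say, the case $3\alpha+\delta$ following by the conjugation $P\mapsto P^*$ and the swap $(\alpha,\beta)\leftrightarrow(\delta,\gamma)$ that appears in the statement. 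So in each pattern we start from $A=B=\alpha+3\delta$, which already reads as a linear relation among the angles at $N$ and at $S$, and then propagate the length-$b$ constraints (thick edges) through the adjacent tiles to pin down the types of the non-pole angles $\angle v_{j-1}v_j v_{j+1}$, using \eqref{gg8}-style reasoning (each such angle is $\alpha$ or $\delta$ because it is opposite the length-$b$ edge) and the absence of the vertex types $\alpha+\beta+\delta$ and $\alpha+\gamma+\delta$ established earlier.

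The concrete mechanism in each case is to read off the type of a 3-valent equatorial vertex, get a relation like $2\alpha+\beta=2$ or $2\delta+\gamma=2$ (exactly as in equations~\eqref{gg10}, \eqref{gg11} and the Case~1 computation of Theorem~\ref{thm:kouho2:one}), combine it with the pole relation $A=\alpha+3\delta=2$ and with \eqref{eq:area} to deduce a forced equality among two of $\alpha,\beta,\gamma,\delta$, and then invoke Lemma~\ref{lem:6.2} ($\beta\ne\delta$, $\alpha\ne\gamma$ in type~2) or Lemma~\ref{lune}\eqref{assert:lune:1} ($\alpha\ne\beta$ or $\gamma\ne\delta$) or Lemma~\ref{lem:u} ($\alpha\ne\delta$, $\beta\ne\gamma$ under the valence hypotheses) to reach a contradiction. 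For the boundary case $F=8$ I would additionally use that the tiling is genuinely edge-to-edge, which forbids an angle from equalling or exceeding~$1$ except in the degenerate identification mentioned in the caption of Figure~\ref{fig:Forbidden} (where $v_{i+6},v_{i+7}$ coincide with $v_{i-2},v_{i-1}$ and all triangle slots are empty); that degenerate sub-case I would dispatch separately by a direct count of the pole valence, which becomes exactly $4$, making $A$ the full vertex type of $N$ and forcing $\alpha+3\delta=2$ together with $\beta,\gamma$ each appearing, contradicting \eqref{eq:area}.

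The main obstacle I expect is bookkeeping rather than depth: in pattern $(\theothercases)$ the designated $\delta$ around $N$ is, as the caption notes, exchangeable with any of the designated $\alpha$'s, so one must be careful that the argument chosen does not secretly depend on which of the four equatorial edges the distinguished angle sits on — I would phrase the propagation so that it only uses the relation $A=\alpha+3\delta$ and the two length-$b$ tiles flanking the distinguished $\delta$, which is symmetric under that exchange. A second delicate point is keeping straight, throughout, that the triangle symbols allow extra edges at the poles, so that "$A=\alpha+3\delta$" is a statement about four \emph{designated} angles and not the full vertex type of $N$; the contradictions must therefore be derived from $A\le 2$ (with equality only when $N$ has valence exactly $4$, i.e.\ in the $F=8$ degenerate sub-case) rather than from $A=2$ in general — except that once an equatorial relation such as $2\alpha+\beta=2$ is in hand, combining it with $\alpha\ne\delta$ and $A\le 2$ already collapses the possibilities. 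Once all six listed patterns (three plus their $(\alpha,\beta)\!\leftrightarrow\!(\delta,\gamma)$ swaps) are eliminated this way, the lemma is proved, and together with Lemma~\ref{cor:homo} it completes the proof of Theorem~\ref{thm:Forbidden}\eqref{assert:forbidden_blade}.
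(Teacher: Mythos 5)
Your core mechanism is the right one and is what the paper actually does: propagate the type assignment through the length-$b$ edges (in a type-2 tile the $b$-edge joins the $\alpha$- and $\delta$-corners), read off the full vertex types of the 3-valent equatorial vertices $v_{i\pm1}$, $v_{i+2}$, equate the resulting relations (e.g.\ $\gamma+2\delta=2$ against $\gamma+2\beta=2$ in pattern~(1), or $\beta+2\delta=2$ against $Y+2\gamma=2$ in pattern~(2)), and contradict Lemma~\ref{lem:6.2}, Lemma~\ref{lune} or Lemma~\ref{lem:u}. In pattern~(3) the paper's contradiction is even more combinatorial: the matching of designated angles around $v_F$ forces $\alpha=\delta$ outright, against Lemma~\ref{lem:u}. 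So the skeleton of your case analysis would go through.

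Two points would derail the write-up as you have framed it. First, the ``pole relation $A=\alpha+3\delta=2$'' that you repeatedly propose to combine with the equatorial relations is not available: the triangle symbols permit additional edges at the poles, so $A$ is in general a strict partial sum of the vertex type of $v_F$; you flag this yourself, but your fallback ($A\le2$, or the valence-4 count in the degenerate $F=8$ sub-case) is never turned into a concrete contradiction, and in fact no such pole equation is needed anywhere --- every arithmetic equation in the paper's proof comes from a 3-valent equatorial vertex whose \emph{full} type is determined. Second, Lemma~\ref{lem:blade_impossible} is a purely local statement about type-2 charts, with no convexity, no $F=8$, and no reference to $A=B$ or to Lemma~\ref{cor:homo}; the paper deliberately keeps its proof free of those hypotheses because the same three patterns are reused in Subsection~\ref{subsec:latter} to eliminate Cases~1, 7 and 9 of Figure~\ref{fig:right_10possibilities}, where the ambient configuration is the \emph{right} pattern of Figure~\ref{fig:Forbidden} and the conclusions of Lemma~\ref{cor:homo} do not apply. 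By importing $A=B$, $\alpha\ne\delta$ and the convexity inequalities into the proof of the lemma, you would prove only the restricted statement needed to reject the ten cases of Figure~\ref{fig:left_10possibilities}, and the later reuse would no longer be justified.
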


Theorem~\ref{thm:Forbidden}~\eqref{assert:forbidden_blade} is  proved through rejecting all the ten
charts in Figure~\ref{fig:left_10possibilities} as well as all the ten patterns with $\alpha$
and $\delta$ swapped, as follows:
Case~9 in Figure~\ref{fig:left_10possibilities} is impossible by the 
pattern~(\casenine) of Figure~\ref{fig:blade_impossible}, Case~2 by the
 pattern~(\casetwofour), Case~4 by applying the same 
pattern~(\casetwofour) with $(v_F, v_{F+1})$ being (the infinite point, the
central point), and the other cases by the pattern~(\theothercases).
All the ten patterns in Figure~\ref{fig:left_10possibilities} with $\alpha$
and $\delta$ swapped are rejected similarly.

\bigskip
The rest of this subsection is devoted to the proof of Lemma~\ref{lem:blade_impossible}.
(\casenine) We prove that the
pattern~(\casenine) of Figure~\ref{fig:blade_impossible} is impossible
 for a type-2 chart, by  establishing the following:
there do not exist two type-2 tiles $v_i v_{i+1} v_{F+1} v_{i-1}$ and
 $v_F v_i v_{i-1} v_{i-2}$ such that they match at the length-$a$ edge
 $v_i v_{i-1}$, (\blgh) the opposite edge $v_{i+1} v_{F+1}$ to the
 matching edge $v_i v_{i-1}$ in the first tile has length $b$, and an adjacent
 edge $v_i v_F$ to the matching edge $v_i v_{i-1}$ in the second tile
 does length $b$;
 (\alphaangle)
the two inner angles at an extreme point $v_F$ of the edge $v_i v_F$
and $\angle v_{i+1} v_{F+1} v_{i-1}$ have type $\alpha$; and
(\betaangle)
$\angle
 v_{i-2} v_{i-1}  v_{F+1} =\beta$. 

Assume some spherical tiling by congruent quadrangles of type~2
 satisfies all the three conditions.  By the condition~(\blgh) and the
 type of the first two angles of the condition~(\alphaangle), the two
 inner angles at the other extreme point $v_i$ of the edge $v_i v_F$,
 namely, $\angle v_F v_i v_{i+1}$ and $\angle v_F v_i v_{i-1}$, have
 type $\delta$. The two inner angles are adjacent to the inner angle
 $\angle v_{i+1} v_i v_{i-1}$ of type $\gamma$, because of the first
 tile $v_i v_{i+1} v_{F+1} v_{i-1}$ and because the last angle $\angle
 v_{i-1} v_{F+1} v_{i+1}$ of the condition~(\alphaangle) has type
 $\alpha$. Therefore $\gamma+2\delta=2$. On the other hand, by $v_{i+1}
 v_{F+1}= b$ and the condition~(\alphaangle), we have $\angle v_{F+1}
 v_{i-1} v_i =\beta$.  Because of the second tile $v_F v_i v_{i-1}
 v_{i-2}$ and because $\angle v_i v_F v_{i-2}=\alpha$ by the
 condition~(\alphaangle), we have $\angle v_i v_{i-1}
 v_{i-2}=\gamma$. By this and the condition~(\betaangle), we have
 $\gamma+2\beta=2$.  Therefore $\beta=\delta$, which contradicts against
 Lemma~\ref{lem:6.2}. By the argument above with $(\alpha,\beta)$ and
 $(\delta,\gamma)$ swapped, we have $\gamma=\alpha$, which also
 contradicts against Lemma~\ref{lem:6.2}.

Similarly, we can prove that the pattern~(\casenine) of Figure~\ref{fig:blade_impossible} with
$(\alpha,\beta)$ and $(\delta,\gamma)$ swapped is impossible for a
type-2 chart.

\bigskip (\casetwofour) We prove that the pattern~(\casetwofour) in
 Figure~\ref{fig:blade_impossible} is impossible for a type-2
 chart. Assume otherwise. Then there exist four tiles $v_F v_{i+2} v_{i+1} v_i$, $v_{i+2} v_{i+3}
 v_{F+1} v_{i+1}$, $v_{i-1} v_i v_{i+1} v_{F+1}$ and $v_i v_{i-1}
 v_{i-2} v_F$ such that

\medskip

\begin{enumerate}[$($i$)$]
\item
\label{threevalent} The two vertices $v_{i-1}$ and $v_{i+2}$ are 3-valent.

\item
\label{lghb} The two edges $v_i v_F$ and $v_{i+1} v_{F+1}$ have length $b$. 

\item \label{alphaangle} The angle $\angle v_{i-2} v_F v_i$, as well as
the two designated angles around the vertex $v_{F+1}$, $\angle v_{i+1} v_{F+1} v_{i-1}$, and
$\angle v_{i+3} v_{F+1} v_{i+1}$, have types $\alpha$.

\item 
\label{vFdelta}  $\angle v_i v_F v_{i+2}$ has type $\delta$.
\end{enumerate}

\medskip   By the conditions~\eqref{alphaangle} and
\eqref{vFdelta}, the vertex $i$ has type \begin{align}
 \alpha+\gamma+\delta=2\label{c1}
\end{align} 
while the vertex  $i+1$ has type
\begin{align}
\beta+2\delta=2\label{c2}.
\end{align}
By the conditions~\eqref{threevalent} and \eqref{lghb}, the lengths of the two edges $v_{i+2}
 v_{i+3}$, and  $v_{i+2} v_F$ are $a$ and  $\angle v_F v_{i+2} v_{i+3}$ has type
  $Y\in \{\beta,\gamma\}$.  So
 the vertex $i+2$ has type
\begin{align}
 Y+2\gamma = 2,\quad (Y\in \{\beta,\gamma\}.) \label{c3}
\end{align}
When $Y=\beta$, 
 \eqref{c2} implies $\gamma=\delta$, \eqref{c1} becomes
 $\alpha+2\delta=2$, and \eqref{c2} implies $\alpha=\beta$. This contradicts
 against Lemma~\ref{lune}.

When $Y=\gamma$, the equation~\eqref{c3} implies $\gamma=2/3$. By
 condition~\eqref{lghb}, the two edges $v_{i-1} v_{F+1}$ and $v_{i-1}
 v_{i-2}$ have length $a$. Hence
 the vertex $i-1$ is 3-valent by condition~\eqref{threevalent} and has
 type $2\beta+\gamma$ or $\beta+2\gamma$. In either case,
 $\beta=\gamma=2/3$. This contradicts against Lemma~\ref{lem:u}.

Similarly, we can prove that the pattern~(\casetwofour) with
$(\alpha,\beta)$ and $(\delta,\gamma)$ swapped is impossible for a
type-2 chart.
\bigskip

(\theothercases) We prove the chart~(\theothercases) of
 Figure~\ref{fig:blade_impossible} is impossible for a type-2
 chart. Assume otherwise. Because the types of $\angle v_{i+1} v_{F+1}
 v_{i-1}$ and $\angle v_{e+1} v_{F+1} v_{e-1}$ are equal and because the
 four edges $v_i v_F, v_{i+1} v_{F+1}, v_e v_F, v_{e+1}$ and $v_{F+1}$
 have length $b$, the type of $\angle v_{i-1} v_i v_{i+1}$ and that of
 $\angle v_{e-1} v_e v_{e+1}$ are equal. Since the sum of the types of
 four angles around the vertex $F$ $\angle v_{i-2} v_F v_i$, $\angle v_i
 v_F v_{i+2}$, $\angle v_{e-2} v_F v_e$, $\angle v_e v_F v_{e+2}$ is
 $3\alpha+\delta$ or $\alpha+3\delta$, we have $\alpha=\delta$.

 The vertex $v_{i-1}$ is 3-valent and incident to three edges of length
$a$.  There are two type-2 tiles $v_{i-1} v_i v_{i+1} v_{F+1}$ and $v_i
v_{i-1} v_{i-2} v_F$.  Moreover, the vertex $v_{i+1}$ is 3-valent and
incident to a length-$a$ edge and a length-$b$ edge. By
Lemma~\ref{lem:u}, $\alpha=\delta$. We have a contradiction.

Similarly, we can prove that the pattern~(\theothercases) with
$(\alpha,\beta)$ and $(\delta,\gamma)$ swapped is impossible for a
type-2 chart. This completes the proof of Lemma~\ref{lem:blade_impossible}.
\qed

\medskip
Hence, we have proved  Theorem~\ref{thm:Forbidden}~\eqref{assert:forbidden_blade}.\qed

\subsection{The proof of
  Theorem~\ref{thm:Forbidden}~\eqref{assert:forbidden_typhoon} 
\label{subsec:latter}}

\begin{lemma}\label{lem:typhoon_list} If a chart of a spherical tiling
 by congruent quadrangles is of
 the form
 Figure~\ref{fig:Forbidden}~(right), then the vertices
 $i-1$ and  $i+2$ have the same type, and the tile is of type~2. In
 this case, the list of such  charts constitutes ten patterns of
 Figure~\ref{fig:right_10possibilities} and those with
 $(\alpha,\beta)$ and $(\delta,\gamma)$ swapped.
\begin{figure}[ht]
\centering
\includegraphics[scale=0.6,width=9cm, height=9cm]{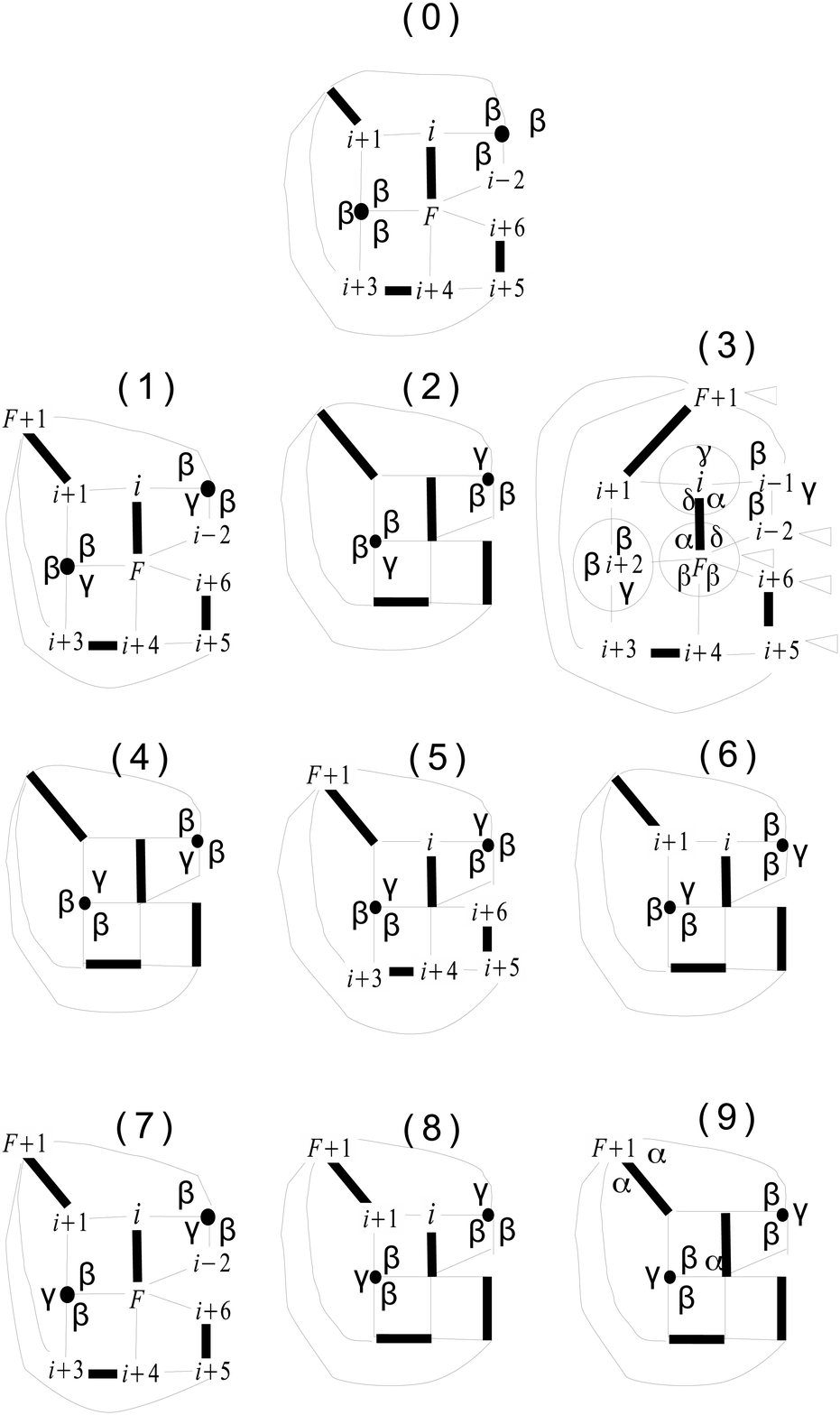}
\caption{The ten patterns in this figure and those with the designated
  $\beta$'s and the designated $\gamma$'s swapped forms the complete
  list of charts of the form Figure~\ref{fig:Forbidden}~(right). See
  Lemma~\ref{lem:typhoon_list}.  All charts are subject to common
  naming convention of the vertices.
  \label{fig:right_10possibilities}}
\end{figure} 
\end{lemma}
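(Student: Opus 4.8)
The plan is to follow the route of the proof of Lemma~\ref{cor:homo}, adapted to the right pattern of Figure~\ref{fig:Forbidden}: read off the angles forced by the three thick (length-$b$) edges, eliminate type~4, invoke Lemma~\ref{lem:u} to get $\alpha\ne\delta$, deduce the equality of the two end-vertex types, and finally enumerate the surviving charts. For the first step, recall that in a type-2 or type-4 tile the unique length-$b$ edge carries the angles $\alpha$ and $\delta$ at its two endpoints (Remark~\ref{remark:mechanize}); hence every vertex of the right pattern lying on one of the three thick edges carries an angle in $\{\alpha,\delta\}$ at the relevant corner of each incident tile containing that thick edge. Propagating this through the tiles around the thick cluster — using that adjacent edges of a tile are equilateral and that each tile has only one length-$b$ edge — forces the remaining angles and the $a/b/c$-labels near the vertices $i-1,i,i+1,i+2$ and the pole(s) met by the cluster; in particular $v_{i-1}$ and $v_{i+2}$ turn out to be $3$-valent, incident only to edges of lengths $a$ and $b$. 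As in Lemma~\ref{lem:2a2d}, a pole vertex type of the form $2\alpha+2\delta$ is ruled out under the convexity/$F=8$ hypothesis inherited from Figure~\ref{fig:Forbidden}, since it would force $\beta\ge1$ or $\gamma\ge1$.

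Next I would eliminate type~4. If the tile were of type~4, then wherever two corners of type $\alpha$ meet across a thick edge the opposite endpoint is incident to two edges of length $c$, which is impossible; and examining the placements of the two $\delta$-corners forces a length-$c$ edge into a position that must instead be length $a$ or $b$, or forces $\alpha=\gamma$ against Lemma~\ref{lem:6.2}. Since every admissible placement of $\alpha,\delta$ around the cluster is excluded, the tile is of type~2. Then Lemma~\ref{lem:u} applies through the $3$-valent vertex $v_{i-1}$ (three length-$a$ edges) and a $3$-valent vertex incident to a length-$a$ and a length-$b$ edge, giving $\alpha\ne\delta$ and $\beta\ne\gamma$, while Lemmas~\ref{lune} and \ref{lem:6.2} give $\alpha\ne\gamma$ and $\beta\ne\delta$.

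The vertex types of $v_{i-1}$ and $v_{i+2}$ are then forced to coincide: either the right pattern carries a reflection symmetry interchanging its two ends (the analogue of $h$ in Remark~\ref{rem:automorphism}, with triangle symbols regarded as edges), which sends one type to the other up to the involution $\alpha\leftrightarrow\delta$, $\beta\leftrightarrow\gamma$ and is upgraded to literal equality by $\alpha\ne\delta$, or else one compares the equations \eqref{eq:vertextypes} at $v_{i-1},v_i,v_{i+1},v_{i+2}$ and the poles directly, again using the inequalities just obtained. With both end-vertices of the same type, the only remaining freedom is how the distinguished corner $\delta$ (equivalently $\alpha$) is distributed among the positions forced into $\{\alpha,\delta\}$ on the two sides of each thick edge; enumerating these distributions modulo the automorphism above yields exactly the ten patterns of Figure~\ref{fig:right_10possibilities} together with their $(\alpha,\beta)\leftrightarrow(\delta,\gamma)$ counterparts.

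The main obstacle is the type-4 elimination together with the equality of the two end-vertex types. The former requires running through every placement of $\alpha$ and $\delta$ around the thick cluster and chasing the induced $a/b/c$-labels far enough to contradict Lemma~\ref{lem:6.2} or the one-$b$-edge-per-tile rule; the latter must be established without knowing what sits behind the triangle symbols, so one has to argue that the forced-angle propagation reaches only the tiles immediately adjacent to the thick cluster and the pole(s) and is therefore insensitive to the arbitrarily many further edges the triangles represent. The enumeration itself is then finite bookkeeping, but some care is needed so that the automorphism identification leaves exactly ten inequivalent patterns, neither more nor fewer.
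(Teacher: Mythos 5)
There is a genuine gap: your argument is essentially a transplant of the \emph{left}-pattern machinery (Lemma~\ref{lem:2a2d}, Lemma~\ref{cor:homo}, Remark~\ref{rem:automorphism}) onto the right pattern, and the two configurations are not analogous. First, you invoke ``the convexity/$F=8$ hypothesis inherited from Figure~\ref{fig:Forbidden}'' to exclude a pole type $2\alpha+2\delta$; but Lemma~\ref{lem:typhoon_list} carries no such hypothesis (the paper deliberately isolates convexity, using it only later in Lemma~\ref{lem:case_three_forbidden_typhoon}), and the right pattern has only three thick edges, so no vertex has four designated $\{\alpha,\delta\}$ angles in the first place. Second, and more seriously, your proof that $v_{i-1}$ and $v_{i+2}$ have the same type does not work: the reflection symmetry you appeal to is asserted in Remark~\ref{rem:automorphism} only for the \emph{left} pattern, and even if the right pattern admitted such an involution it would carry the type of one end-vertex to its conjugate $n_\beta\gamma+n_\gamma\beta$, not to the same type --- knowing $\alpha\ne\delta$ does not ``upgrade'' this to literal equality, since these types involve only $\beta$ and $\gamma$. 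The fallback ``compare the equations directly'' is not an argument. The correct route is elementary and different: both $v_{i-1}$ and $v_{i+2}$ are $3$-valent and incident to three length-$a$ edges, so each has type $n_\beta\beta+n_\gamma\gamma$ with $n_\beta+n_\gamma=3$ summing to $2$; if the two types differed, subtracting would force $\beta=\gamma=2/3$, contradicting $\beta\ne\gamma$, which Lemma~\ref{lem:u} supplies because $v_{i+2}$ is an all-$a$ $3$-valent vertex and $v_i$ is $3$-valent with one length-$a$ and one length-$b$ edge.

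Your type-$4$ elimination and the final enumeration suffer from the same mismatch. The actual exclusion of type~$4$ for the right pattern is a specific angle/length chase: one shows $\angle v_i v_F v_{i+2}$ must be $\alpha$ (else $v_Fv_{i+2}$ has length $c$ but meets no $b$-edge in the next tile), which forces $v_i$ to have type $\alpha+\gamma+\delta$, and then the length-$c$ edge $v_{i+2}v_{i+3}$ propagates to make the type of $v_F$ strictly exceed $\alpha+\gamma+\delta=2$. Your sketch (``two $\alpha$'s across a thick edge force two $c$-edges at the opposite endpoint'') is the left-pattern argument for $A\ne 3\alpha+\delta$ and does not address this configuration. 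Finally, the ten patterns arise from enumerating the common type of $v_{i-1}$ and $v_{i+2}$ and the placements of $\beta,\gamma$ around them, not from ``distributing $\delta$ among the $\{\alpha,\delta\}$ positions of the thick edges,'' so your counting principle would not reproduce Figure~\ref{fig:right_10possibilities}.
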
 

\begin{proof}
We first verify that the tile is a quadrangle of type~2.
Assume otherwise. Then the tile is a quadrangle of type~4. If $\angle
 v_i v_F v_{i+2}$ of Figure~\ref{fig:Forbidden}~(right) has type $\delta$, then the edge
 $v_F v_{i+2}$ has length $c$, but is not incident to an edge of length
 $b$ in a quadrangle $v_F v_{i+4} v_{i+3} v_{i+2}$, which is a
 contradiction. Therefore
$\angle v_i v_F v_{i+2}$ has type $\alpha$, and thus $\angle v_F v_i
 v_{i+1}$ has type $\delta$. Because two edges of length $c$ are not adjacent,
$\angle v_F v_i v_{i-1}$ has type $\alpha$ and $\angle v_{i-1} v_i
 v_{i+1}$ does $\gamma$. 
On the other hand, the edge $v_{i+2} v_{i+3}$ of the quadrangle $v_{i+1}
 v_{F+1} v_{i+3} v_{i+2}$ has length $c$, since the edge is not adjacent to
 the $b$-length edge $v_{i+1} v_{F+1}$ of the quadrangle. Therefore the
 edge $v_{i+4} v_F$ has length $c$ and $\angle v_{i+2} v_F v_{i+4}$ has
 type $\gamma$. Hence the type of $v_F$ subsumes the type
 $\alpha+\gamma+\delta$ of $v_i$ and thus is greater than 2, which is a
 contradiction. Hence the tile is of type~2.

 The vertex $v_{i+2}$ is 3-valent and is incident to three
 length-$a$ edges. Furthermore, the vertex $v_i$ is 3-valent and is
 incident to a length-$a$ edge and to a length-$b$ edge. Hence
 Lemma~\ref{lem:u}  implies $\beta \ne \gamma$. 

However, since the vertex $v_{i-1}$ is 3-valent and is incident to three
 length-$a$ edges, 
if the vertices $v_{i-1}$ and $v_{i+2}$ have different types, then
 $\beta=\gamma=2/3$. This is a contradiction. Thus, the two vertices
 $v_{i-1}$ and $v_{i+2}$ have the same type.

By enumerating all the possibilities of the types of $v_{i-1}$ and
 $v_{i+2}$,  we have ten patterns of
Figure~\ref{fig:right_10possibilities} and the ten patterns with
$\beta$ and $\gamma$ swapped. \qed\end{proof}

To finish the proof of Theorem~\ref{thm:Forbidden}, we reject all ten
cases in Figure~\ref{fig:right_10possibilities}.  Case~0 is impossible
because the vertex $i$ has type $\alpha+\gamma+\delta$ which is a
subtype of the type of the vertex $F$.

\begin{lemma}\label{lem:case_three_forbidden_typhoon}If a spherical
 tiling by $F$ congruent quadrangles satisfies $F=8$ or the \emph{convexity}
 of the tile, then Case~3 of Figure~\ref{fig:right_10possibilities} is impossible,
\end{lemma}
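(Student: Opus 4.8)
The plan is to argue by contradiction: assume the chart of Case~3 of Figure~\ref{fig:right_10possibilities} occurs, and derive that some inner angle of the tile is $\ge 1$, which contradicts the convexity hypothesis directly and is incompatible with an edge-to-edge tiling of $\pdw{8}$ when $F=8$. By Lemma~\ref{lem:typhoon_list} we may already assume the tile is of type~2, that $v_{i-1}$ and $v_{i+2}$ have a common vertex type (being $3$-valent and incident to three length-$a$ edges, this type is one of $3\beta,\ 2\beta+\gamma,\ \beta+2\gamma,\ 3\gamma$), and that $v_i$ is $3$-valent with one length-$a$ and one length-$b$ edge. The first step is therefore to fix the length-$b$ edges prescribed by the pattern (the meridian edge $v_i v_F$ together with the further length-$b$ edges forced by Figure~\ref{fig:Forbidden}~(right)), and then to propagate angle labels over the tiles incident to these edges: in a type-2 tile the two endpoints of a length-$b$ edge carry the angles $\alpha$ and $\delta$, while $\beta$ and $\gamma$ never touch a length-$b$ edge, so the angles around $v_i$, around the two tiles sharing the consecutive length-$b$ edges, and around the poles $v_F$ and $v_{F+1}$ become largely determined; every remaining branch is eliminated using Lemma~\ref{lem:area} (no vertex has type $\alpha+\beta+\gamma+\delta$) and Lemma~\ref{lem:6.2} ($\beta\ne\delta$ and $\alpha\ne\gamma$ in type~2), exactly as in the proof of Lemma~\ref{lem:blade_impossible}.

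Once the labels are in place, I would collect the vertex relations~\eqref{eq:vertextypes} at $v_{i-1}=v_{i+2}$, at $v_i$, at the equatorial vertices adjacent to $v_i$, and at one pole, and adjoin the area relation~\eqref{eq:area}. I expect this small linear system in $\alpha,\beta,\gamma,\delta$ to be essentially determined (up to the $\beta\leftrightarrow\gamma$ symmetry of the figure) and to force one of the equatorial angles, say $\beta$, to satisfy $\beta\ge 1$; it is significant that the \emph{cheap} contradictions available from Lemma~\ref{lune}, Lemma~\ref{lem:6.2}, and Lemma~\ref{lem:u} do not arise here, for otherwise the hypothesis ``$F=8$ or convexity'' would be unnecessary, so the only obstruction the system produces is the size of an angle. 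From $\beta\ge 1$ one concludes: if the tile is convex then all of $\alpha,\beta,\gamma,\delta<1$, a contradiction; and if $F=8$ one reaches a contradiction exactly as in the proof of Lemma~\ref{lem:2a2d}, the relation $\alpha+\beta+\gamma+\delta=5/2$ together with the fixed $3$-valent/$4$-valent structure of $\pdw{8}$ being incompatible with an angle $\ge 1$ appearing where the chart places it. Alternatively, for $F=8$ one may simply invoke \cite{sakano11:_towar_class_of_spher_tilin}: the $\pdw{8}$-specialisation of the length-assignment underlying Case~3 is the right pattern of Figure~\ref{fig:pdw8lghass}, which is there shown unrealizable.

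The main obstacle is the label bookkeeping in the first step. The pattern of Figure~\ref{fig:right_10possibilities} contains \emph{triangle} positions (Convention~\ref{conv}) which, for $F\ge 10$, may conceal extra edges at the poles, so the angle-propagation and the resulting equations must use only edges guaranteed to be present and must not tacitly treat the poles as $4$-valent (except in the $F=8$ specialisation, where the triangles are empty). A secondary delicate point is making the conclusion ``$\beta\ge 1$'' genuinely forced: one must verify that the linear system has no solution with all four angles $<1$, since such a solution would mean Case~3 is realizable by a suitable concave tile with $F\ge 10$ and the argument would then have to be routed through the pole angle-sum rather than through convexity — which is exactly why the lemma is stated only for convex tiles or for $F=8$.
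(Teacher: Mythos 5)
Your overall strategy is the right one --- the paper also rejects Case~3 by showing that an inner angle is forced to be at least $1$, contradicting convexity (or, for $F=8$, the edge-to-edge property when the angle equals $\pi$). But your proposal stops exactly where the work is: the claim that ``the linear system forces $\beta\ge1$'' is asserted as an expectation rather than derived, and as written it cannot be derived the way you describe. The decisive relation is \emph{not} an equation of a linear system at all; it is an inequality at the pole $v_F$, and obtaining it requires two concrete observations you never make. First, $\angle v_{i+3}v_{i+2}v_F=\gamma$ and $v_{i+2}v_F$ has length $a$ force $\angle v_{i+2}v_Fv_{i+4}$ to have type $\beta$. Second, the pole angle adjacent to it must \emph{also} be $\beta$, because otherwise the type of $v_F$ would subsume $\alpha+\beta+\gamma+\delta$, which exceeds $2$ by Lemma~\ref{lem:area}. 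Only then does the pole yield $\alpha+2\beta+\delta\le2$ (with equality exactly when $F=8$, the triangles being empty), and combining this with the genuine equations $\alpha+\gamma+\delta=2$ at $v_i$ and $2\beta+\gamma=2$ at $v_{i-1}$ gives $2\beta\le\gamma$ and hence $\gamma\ge1$. Note it is $\gamma$, not $\beta$, that is forced large; your guess of the wrong angle is a symptom of the computation not having been carried out.

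You do correctly flag the danger of treating the poles as $4$-valent, but you then offer no mechanism for extracting usable information from a pole of unknown valence --- which is precisely what the $\le$ sign accomplishes and precisely where the hypothesis ``$F=8$ or convexity'' enters (for $F\ge10$ one only gets strict inequality, hence $\gamma>1$, killed by convexity; for $F=8$ one gets $\gamma=1$, killed by edge-to-edge-ness). Without steps of this kind your ``secondary delicate point'' --- verifying that no solution with all angles $<1$ exists --- remains unaddressed, so the proposal does not yet constitute a proof. The fallback of citing \cite{sakano11:_towar_class_of_spher_tilin} for $F=8$ is legitimate but covers only half the statement.
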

\begin{proof} The angle $\angle v_{i+2} v_F v_{i+4}$ has type $\beta$
because $\angle v_{i+3} v_{i+2} v_F$ has type $\gamma$ and the edge
$v_{i+2} v_F$ has length $a$. Moreover the angle has the same type as
the angle $\angle v_{i+2} v_F v_{i+4}$ because otherwise, we have
$\alpha+\beta+\gamma+\delta\le 2$, which contradicts against
Lemma~\ref{lem:area}. We have $\alpha+2\beta+\delta\le2$ (the equality
holds when the number of faces is 8) and $\alpha+\gamma+\delta=2$
because the edge $v_i v_F$ has length $b$. Since $2\beta+\gamma=2$ from
Figure~\ref{fig:right_10possibilities}~(Case~3), we have $\gamma\ge1$ (the equality
holds when the number of faces is 8), which contradicts against the
convexity of the tile (against fact tiling is edge-to-edge, when the
number of faces is 8).
\qed \end{proof}

The other nine cases are rejected without using the tile's convexity
assumption, as follows:

Each of Case~0, 2, 4 is impossible because the vertex $i$ has type $\alpha+\gamma+\delta,
     \alpha+\beta+\delta,\alpha+\gamma+\delta$ which are subtypes of the
     type of the vertex $F$, in respective case.

Case~1 and Case~7 both contradict against Lemma~\ref{lem:blade_impossible}~(\casenine), and
Case~9 does against Lemma~\ref{lem:blade_impossible}~(\casenine) applied
to $v_{F+1}$.

In Case~6, the type of the vertex $i+1$ is $\alpha+\beta+\delta$. By
comparing the type $2\alpha+\gamma$ of the vertex $i$ and the type
$2\beta+\gamma$ of the vertex  $i-1$, we have 
 $\alpha=\beta$. By this, the type $2\beta+\gamma$ of the
 vertex $i-1$ is equal to $\alpha+\beta+\gamma$. So by comparing it with
 the type $\alpha+\beta+\delta$ of the vertex $i+1$, we have
 $\gamma=\delta$ as well as $\alpha=\beta$. Since Lemma~\ref{lem:typhoon_list} implies the tile is
 of type~2, we have a contradiction against Lemma~\ref{lune}. 

In Case~8, by comparing the type  $\alpha+\gamma+\delta$ of the vertex $i+1$ 
and the type  $\alpha+\beta+\delta$ of the vertex $i$, we have
$\beta=\gamma$, which contradicts against Lemma~\ref{lem:u} because
Lemma~\ref{lem:typhoon_list} implies the tile is of type~2.

In Case~5, the type of vertex $i$ is $2\alpha+\beta=2$.
the type of  $\angle v_{i+4} v_F  v_{i+6}$ is $\beta$ or $\gamma$. First
consider the case $\angle v_{i+4} v_F v_{i+6}$ is $\beta$. 

$\angle v_{F+1} v_{i+3} v_{i+4}$ has type $\delta$. Otherwise it has
$\alpha$. By comparing the types of the vertex $i$ and the vertex $i+3$,
we have $\beta=\gamma$, which contradicts against Lemma~\ref{lem:u}
because Lemma~\ref{lem:typhoon_list} implies the tile is of type~2.

We see that $\angle v_{F+1} v_{i+5} v_{i+4}$ has type $\beta$ and 
 $\angle v_{i+6} v_{i+5} v_{i+4}$ does $\delta$. So  $\angle
v_{i+6} v_{i+5} v_{F+1}$ does $\alpha$. Otherwise $\gamma$ must be
0 because the type of vertex $v_F$ is less than or equal to
$\gamma+\beta+2\delta$.

Then $\beta=\gamma$ by comparing the types of vertices $i+4$ and $i+5$.
Because the tile of type~2 by Lemma~\ref{lem:typhoon_list}, we have a
contradiction against Lemma~\ref{lem:u}. So 
$\angle v_{i+4} v_F v_{i+6}$ does not have type $\beta$ in Case~5.

Next consider the case $\angle v_{i+4} v_F v_{i+6}$ has type $\gamma$ in
Case~5.  Then $\angle v_{i+5} v_{i+4} v_F$ has type $\beta$ and $\angle
v_F v_{i+4} v_{i+3}$ does $\delta$, and $\angle v_{i+3} v_{i+4} v_{i+5}$
has type $\alpha$ or $\delta$. By comparing the types of $i$ and $i+4$,
we have $\alpha=\delta$, which contradicts against Lemma~\ref{lem:u}
because the tile is of type~2 by Lemma~\ref{lem:typhoon_list}.  

All the arguments are still valid even if we swap $(\alpha,\beta)$ and
$(\delta,\gamma)$. So this completes the proof of
Theorem~\ref{thm:Forbidden}.  \qed\bigskip

\section{Discussion\label{sec:future_work}}

By mostly combinatorial argument, we have derived Theorem~\ref{thm:theo}, a
 spherical geometric theorem.

Every spherical tiling by congruent \emph{convex} quadrangles over a
pseudo-double wheel is isohedral, by Theorem~\ref{thm:theo} for type-2 and
type-4 tiles, by \cite{sakano13:_class_of_spher_tilin_by_kdr} for a tile
being a kite or a rhombus. But some by congruent \emph{concave}
quadrangles over a pseudo-double wheel is not
isohedral~(Theorem~\ref{thm:k}). So we conjecture that the ``inverse''
of Gr\"unbaum-Shephard's result holds if the tile is convex. That is,
\begin{conjecture}\label{conj:inverse}For every normal spherical monohedral
tiling topologically a Platonic solid, an Archimedean dual, an $n$-gonal
bipyramid, or an $n$-gonal trapezohedron~($n\ge3$), if the tile is
\emph{convex}, then the tiling is isohedral.\end{conjecture}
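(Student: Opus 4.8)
\medskip
\noindent Here is a strategy toward Conjecture~\ref{conj:inverse}. The plan is to split the problem according to the number $n$ of edges of the tile. Since a convex tile has every inner angle $<\pi$, at a vertex of degree $2$ the two surrounding inner angles would sum to $2\pi$ although each is $<\pi$, which is absurd; hence every tiling in question has minimum degree $\ge 3$, and combining $2E=nF$, $V=2+E-F$ and $2E\ge 3V$ with Euler's formula gives $F(6-n)\ge 12$, so $n\le 5$. For $n=3$ the conjecture is precisely Theorem~\ref{thm:inverse_triangle} together with the completeness of the table of spherical monohedral triangular tilings in~\cite{MR1954054}, so only $n=4$ and $n=5$ remain.

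For $n=4$ the relevant topologies are the $n$-gonal trapezohedra, i.e.\ the pseudo-double wheels $\pdw{F}$ (the cube being $\pdw{6}$), and the four quadrangle-faced Archimedean duals: the rhombic dodecahedron, the rhombic triacontahedron, the deltoidal icositetrahedron and the deltoidal hexecontahedron. Over $\pdw{F}$, Theorem~\ref{thm:theo} disposes of convex tiles of type~2 or type~4 and~\cite{sakano13:_class_of_spher_tilin_by_kdr} of kites and rhombi; since by~\cite[Proposition~1]{agaoka:quad} the tile has a pair of equal adjacent edges, every remaining convex possibility splits into two congruent triangles, which I would reduce to the case $n=3$ by subdividing each tile along that diagonal and showing, on the graph at hand, that the induced triangulation is forced to be edge-to-edge. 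For each of the four Catalan quadrangulations I would run the machinery of Sections~\ref{sec:setting}--\ref{sec:Forbidden} on that single fixed graph $G$: enumerate the perfect matchings of the dual of $G$ (the candidate positions of the distinguished length-$b$ edge, or of the symmetry-axis edge of a kite) modulo $\mathrm{Aut}(G)$, enumerate the angle-assignments obeying~\eqref{eq:vertextypes}, \eqref{eq:area} and the bound that every angle be $<\pi$ --- which, exactly as in the treatment of $\Q_F$ and in Lemma~\ref{lem:case_three_forbidden_typhoon}, leaves only a short list of admissible vertex types --- and verify that every surviving chart is that of the Catalan solid itself up to mirror image, hence isohedral.

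For $n=5$ the only listed topologies are the dodecahedron, the pentagonal icositetrahedron and the pentagonal hexecontahedron, and there is as yet no classification of spherical monohedral convex pentagonal tilings; this regime therefore needs the pentagon analogue of the chart calculus of Section~\ref{sec:setting}. Lemma~\ref{lem:area} becomes $\alpha+\beta+\gamma+\delta+\varepsilon-3=4/F$; the tile again has strongly constrained edge lengths (as for quadrangles in~\cite[Proposition~1]{agaoka:quad}), so each of these three highly symmetric $3$-connected spherical maps admits only a short list of length-assignments, and one then enumerates the angle-assignments and rejects everything but the chart of the corresponding polytope. I expect the dodecahedron to be immediate (twelve faces, every vertex $3$-valent) and the two chiral pentagonal Catalan solids to be where the real work lies.

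The principal obstacle is exactly this last regime, and more broadly the absence of a pentagon theory paralleling the triangle and quadrangle ones. A cleaner route that would bypass the case analysis is a rigidity lemma: if $G$ is the graph of a face-transitive convex polytope $P$ among those listed, then every spherical monohedral tiling with convex tiles and graph $G$ is metrically the radial projection of $\partial P$ onto the sphere. The ingredients would be that the automorphism group of $G$ is realized by the symmetry group of $P$, that monohedrality together with convexity over-determines the length- and angle-assignments enough to force them to be $\mathrm{Aut}(G)$-invariant, and a uniqueness statement of Cauchy--Alexandrov type for the resulting spherical metric; $\mathrm{Aut}(G)$-invariance of the chart yields isohedrality at once. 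The crux is whether convexity alone is strong enough for the over-determination step: in the concave case it plainly fails --- witness the non-isohedral tiling of Theorem~\ref{thm:k} --- so the argument must use convexity in an essential way, just as Theorem~\ref{thm:theo} does.
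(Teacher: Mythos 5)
The statement you are addressing is Conjecture~\ref{conj:inverse}, which the paper explicitly leaves \emph{open}: the authors only note that Theorem~\ref{thm:inverse_triangle} settles the triangle-faced topologies, that the trapezohedral quadrangle case follows from Theorem~\ref{thm:theo} together with \cite{sakano13:_class_of_spher_tilin_by_kdr}, and that the remaining cases might be attacked by the mechanization of Remark~\ref{remark:mechanize} plus trigonometric arguments, with \cite{MR3022611} suggested for the pentagonal cases. So there is no proof in the paper to compare against, and your text is, as you yourself signal, a research programme rather than a proof. The genuinely unproved steps are exactly the ones the paper identifies: (a) the quadrangle-faced Archimedean duals, for which ``run the machinery of Sections~\ref{sec:setting}--\ref{sec:Forbidden} on that fixed graph'' is a plan and not an argument --- the forbidden-pattern lemmas of Section~\ref{sec:Forbidden} are proved only for configurations arising inside pseudo-double wheels and do not transfer verbatim to the rhombic or deltoidal Catalan graphs; and (b) the three pentagon-faced topologies, where no chart calculus or edge-length constraint analogous to \cite[Proposition~1]{agaoka:quad} has been established. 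Your closing ``rigidity lemma'' route is appealing, but its key step (that convexity plus monohedrality over-determines the chart enough to force $\mathrm{Aut}(G)$-invariance) is precisely the content of the conjecture, as the non-isohedral concave tiling of Theorem~\ref{thm:k} shows.

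Two smaller points. First, your opening degree-and-Euler computation is correct but redundant here: the conjecture already restricts the topology to a finite list, which forces the face size to be $3$, $4$ or $5$. Second, your reduction of the ``divisible'' convex quadrangles over $\pdw{F}$ to the triangle case does not work as stated: bisecting each face along the diagonal produces a triangulation that is in general neither edge-to-edge nor topologically a Platonic solid, an Archimedean dual, or a bipyramid, so Theorem~\ref{thm:inverse_triangle} is not applicable to it. This detour is also unnecessary: by the paper's own discussion, a convex tile with a pair of congruent adjacent edges that is not of type~2 or type~4 falls under the kite/rhombus classification of \cite{sakano13:_class_of_spher_tilin_by_kdr}, which already yields isohedrality over $\pdw{F}$.
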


Because of Theorem~\ref{thm:inverse_triangle}, the conjecture is
completely settled down if we work it out for the non-triangle-faced
Platonic solids and such Archimedean duals. The classification of
spherical tiling by twelve congruent pentagons~\cite{MR3022611} seems useful for the solution.
We hope the partial
mechanization described in Remark~\ref{remark:mechanize} followed by
trigonometric arguments is a feasible strategy.

\subsection{Classification of spherical monohedral quadrangular tilings}
The classification of spherical monohedral quadrangular tilings solves
``spherical Hilbert's eighteenth problem'': Enumerate spherical
anisohedral triangles, anisohedral quadrangles and anisohedral
pentagons. By an \emph{anisohedral} tile, we mean a tile that admits a
spherical monohedral quadrangular tiling but not a spherical
\emph{isohedral} tiling. There are two infinite series of spherical
anisohedral triangles but none of the other triangles, kites, darts, rhombi are
anisohedral~\cite{sakano13:_class_of_spher_tilin_by_kdr}.

The only spherical tilings by congruent quadrangles of type~2 or 4 over
pseudo-double wheels are $\P_F$ or $\A$, \emph{if} we can drop the
tile's convexity assumption from Theorem~\ref{thm:Forbidden},
specifically from Lemma~\ref{lem:2a2d} and
Lemma~\ref{lem:case_three_forbidden_typhoon}.  If there is, however,
\emph{another} spherical monohedral tiling by congruent \emph{concave}
quadrangles of type~2 or type~4 with the map being a pseudo-double
wheel, then such a tiling necessarily has a meridian edge of length $b$
by Theorem~\ref{thm:kouho2:one}.

Brinkmann's group observed that
the graphs of  spherical monohedral quadrangular tilings can be  enumerating efficiently
if the number of distinct degrees of vertex is known~(See Table~\ref{tbl:degrees}).
\begin{table}[htb]\small\centering
\begin{tabular}{| r || r | r | r | r | r | r |}
\hline
\backslashbox{$F$}{$\Delta$} & 1  &   2 &      3 &      4 &    5 & 6 \\
\hline
   6 & 1  \\
   8 & 0  &   1 \\
  10 & 0  &   3 \\
  12 & 0  &   7 &       5 \\
  14 & 0  &  11 &      43 &       10 \\
  16 & 0  &  13 &     298 &      199 \\
  18 & 0  &  46 &    1937 &     2981 &      182 \\
  20 & 0  &  33 &   13792 &    38715 &     6242 \\
  22 & 0  & 103 &  100691 &   474123 &   141059 &     631 \\
  24 & 0  & 224 &  758959 &  5596936 &  2658188 &   48095 \\
  26 & 0  & 433 & 5808034 & 64603662 & 45200498 & 1885445 \\
\hline
\end{tabular}
\caption{The number $p(F,\Delta)$ of 2-connected simple spherical
quadrangulations such that the minimum degree is three, the number $F$
of faces is even, and the number of distinct degrees is $\Delta$.  Here $p(F,\Delta)$
is the number of isomorphism classes if orientation-reversing
(reflectional) isomorphisms are permitted. By \cite[Table~2]{MR2186681}, $p(F,\Delta)=0$ if
 $6\le F\le 26$  and  $\Delta\ge7$.
By the courtesy of  Van
Cleemput.\label{tbl:degrees}}\end{table}
 Brinkmann conjectured that there are at most three distinct
vertex types in a chart of any spherical monohedral quadrangular tiling,
motivated by  linear algebraic~(or linear programming) argument that independent
equations of $\alpha,\beta,\gamma,\delta$ arise from the distinct vertex
types. Indeed, the conjecture holds for every
spherical monohedral tiling such that the tile is a kite, a dart, or a
rhombus, according to the classification~\cite{sakano13:_class_of_spher_tilin_by_kdr}. 

Even if we can enumerate all graphs of spherical monohedral
type-2/type-4 quadrangular tiling by computers, to show that spherical
tilings by congruent \emph{concave} quadrangles exist requires difficult
trigonometric arguments because there is a spherical concave quadrangle
$Q$ such that there are continuously many spherical non-congruent
quadrangles $Q'$ with the same cyclic list of inner angles of $Q$,
according to \cite{akama13:_spher_tilin_by_congr_quadr_ii}.

We propose to classify 
\begin{enumerate}\item
the spherical tilings by congruent
tilings such that the tile is possibly concave and the graphs are the spherical monohedral
(kite/dart/rhombic)-faced
tilings~\cite{sakano13:_class_of_spher_tilin_by_kdr}; and

\item the spherical tilings by congruent \emph{convex} quadrangles of
		      type~2. 

Computer experiments using \cite{plantri,MR2186681} show that for sufficiently high number of faces
more than 40\% of spherical quadrangulations can already be excluding as
                                  the map of a spherical tiling by congruent convex
                                  type-2 quadrangles by checking for
		      some small forbidden substructures~\cite{akama13:_spher_tilin_by_congr_quadr}.
\end{enumerate}

\end{document}